\documentclass[12pt]{amsart}
\usepackage{times}
\usepackage{anysize}
\marginsize{2.7cm}{2.44cm}{2.0cm}{3.0cm}
\usepackage[small,it]{caption}
 \usepackage[latin1]{inputenc}
 \usepackage[dvips]{graphicx}
 \usepackage{wrapfig}
 \usepackage{amsmath}
 \usepackage{amsthm}
 \usepackage{amsfonts}
 \usepackage{amssymb}
 \usepackage{layout
} \usepackage{verbatim}
 \usepackage{alltt}
\usepackage{yfonts}

\usepackage[all]{xy}
\usepackage{setspace}

\newcommand{\ord}{\textup{ord}}

\newcommand{\LL}{\Lambda}
\newcommand{\LLac}{\Lambda^{\textup{ac}}}
\newcommand{\TT}{\mathbb{T}}
\newcommand{\QQ}{\mathbb{Q}}
\newcommand{\FF}{\mathcal{F}}
\newcommand{\VV}{\mathbb{V}}

\newcommand{\lra}{\longrightarrow}
\newcommand{\ZZ}{\mathbb{Z}}
\newcommand{\PP}{\mathcal{P}}

\newcommand{\NN}{\mathcal{N}}
\newcommand{\ra}{\rightarrow}

\newcommand{\be}{\begin{equation}}
\newcommand{\ee}{\end{equation}}

\newcommand{\al}{\mathcal{L}}

\newcommand{\oo}{\mathcal{O}}
\newcommand{\RR}{\mathcal{R}}

\newcommand{\mm}{\hbox{\frakfamily m}}

\numberwithin{equation}{section}
\newtheorem{thm}{Theorem}[section]
\newtheorem{lemma}[thm]{Lemma}
\newenvironment{define}{\par\medskip\noindent\refstepcounter{thm}
\bgroup{\hspace*{-0.15 cm}\bf{Definition}
\thethm.}\bgroup}{\egroup \egroup\par\medskip}
\newtheorem{prop}[thm]{Proposition}
\newtheorem{cor}[thm]{Corollary}
\newenvironment{rem}{\par\medskip\noindent\refstepcounter{thm}
\bgroup{\hspace*{-0.15 cm}\bf{Remark} \thethm.}\bgroup}{\egroup
\egroup\par\medskip} \DeclareMathOperator{\id}{id}\parskip 2pt

\newenvironment{hypo}{\par\medskip\noindent\refstepcounter{thm}
\bgroup{\hspace*{-0.15 cm}\bf{Hypothesis}
\thethm.}\bgroup}{\egroup \egroup\par\medskip}

\newenvironment{assume}{\par\medskip\noindent\refstepcounter{thm}
\bgroup{\hspace*{-0.15 cm}\bf{Assumption}
\thethm.}\bgroup}{\egroup \egroup\par\medskip}
%\renewcommand{\theteorema}{\Alph{teorema}.}

%%%%%for appendix%%%%%%
\newcounter{Athm}[section]\setcounter{Athm}{1}

\renewcommand{\theAthm} {\arabic{Athm}}

\newenvironment{Athm}{\par\medskip\noindent\refstepcounter{Athm}
\bgroup{\hspace*{-0.15 cm}\bf{Theorem}
A.\theAthm.}\bgroup\it}{\egroup \egroup\par\medskip}

\begin{document}
\title{{B}\lowercase{ig} {H}\lowercase{eegner point} {K}\lowercase{olyvagin system for a family of modular forms}}

\author{K\^az\i m B\"uy\"ukboduk}

\address{K\^az\i m B\"uy\"ukboduk \hfill\break\indent Ko\c{c} University, Mathematics  \hfill\break\indent Rumeli Feneri Yolu \hfill\break\indent 34450 Sariyer, Istanbul
\hfill\break\indent Turkey
\hfill\break\indent 
\vskip0.15cm}
%\address{PUC Facultad de Matem\'aticas,    \hfill\break\indent 
%Campus San Joaqu\'{i}n
%\hfill\break\indent Avenida Vicu\~{n}a Mackenna 4860 \hfill\break\indent Santiago
%\hfill\break\indent Chile}

\keywords{Heegner Points, Deformations of Kolyvagin systems, Iwasawa's main conjecture}
%\curraddr{Ko\c{c} University,  Mathematics \hfill\break\indent Rumeli Feneri Yolu \hfill\break\indent 34450 Sar\i yer/\.Istanbul
%\hfill\break\indent Turkey}
%\address{\textit{Current Address:} \hfill\break\indent
%Ih\'es, Le Bois-Marie, 35,  \hfill\break\indent Route de Chartres \hfill\break\indent F-91440 Bures-sur-Yvette
%\hfill\break\indent France}
%\keywords{Stark conjectures, Euler systems, Kolyvagin systems.}
\subjclass[2000]{11G05; 11G10; 11G40; 11R23; 14G10}

\begin{abstract}
The principal goal of this paper is to develop Kolyvagin's descent
to apply with the big Heegner point Euler system constructed by
Howard\,%~\cite{howard}
for the big Galois representation $\TT$ attached to a Hida family $\mathbb{F}$ of
elliptic modular forms. In order to achieve this,
we interpolate and control the Tamagawa factors attached to each
member of the family $\mathbb{F}$ at bad primes, which should be of
independent interest. Using this, we then work out the Kolyvagin
descent on the big Heegner point Euler system so as to obtain a big Kolyvagin system that interpolates the collection of Kolyvagin systems obtained by Fouquet for each member of the family individually. This construction has standard applications to Iwasawa theory, which we record at the end.
\end{abstract}

\maketitle
%\tableofcontents
\section{Introduction}
\label{sec:intro} The main goal of this article is to develop a Kolyvagin descent
procedure for the big Heegner point Euler system constructed by
Howard in \cite{howard}, associated to a Hida family of ordinary
modular forms. This we achieve under the hypothesis that the family
passes through a single (twisted) eigenform\footnote{In the sense of \cite[\S12.7.10]{nek}. The twisted eigenform we have in mind is denoted by $g_{\mathcal{P}}$ in loc.cit.} whose Tamagawa factors at bad
primes are coprime to $p$. Through this construction, we obtain a
big Kolyvagin system for the big Galois representation, with
standard applications. Before stating our results, we start with
setting up the notation.

 Let $N$ be a positive integer and $p \nmid 6N$ a
prime. Define
$$\omega: \Delta=(\ZZ/p\ZZ)^\times \lra \pmb{\mu}_{p-1}$$
to be the Teichm\"uller character, which we view both as a $p$-adic
and complex character by fixing embeddings $\overline{\QQ}
\hookrightarrow \overline{\QQ}_p$, $\overline{\QQ} \hookrightarrow
\mathbb{C}$, as well as a Dirichlet character modulo $Np$. Let
$$f=\sum_{n=1}^{\infty} a_nq^n \in S_k(\Gamma_0(Np),{\omega}^j)$$
be a normalized cusp form of weight $k \geq 2$, which is an
eigenform for the Hecke operators $T_\ell$ for $\ell\nmid Np$ and
$U_\ell$ for $\ell \mid Np$. Let $E/\QQ_p$ be a finite extension
that contains $a_n$ for all $n$ and let $\oo=\oo_E$ be its ring of integers and $\pi=\pi_E$ a fixed uniformizer. We assume further that $f$ is an eigenform that is
$p$-ordinary and $p$-stabilized, and the conductor of $f$ is
divisible by $N$. This amounts to saying that $a_p \in \oo^\times$
and the system of Hecke-eigenvalues $\{a_\ell \mid \ell \nmid Np\}$
associated to $f$ agrees with that of a newform of level $N$ or
$Np$. Let $G_\QQ=\textup{Gal}(\overline{\QQ}/{\QQ})$ and let
$\rho_f: G_\QQ \ra \textup{GL}_2(E)$ be the Galois representation
attached to $f$ by Deligne~\cite{deligne69}.  Throughout this paper,
we assume the following holds:
\begin{hypo}
\label{hypo1} The semi-simple residual representation
$\overline{\rho}_f$ associated to $\rho_f$ is absolutely
irreducible and is $p$-distinguished.
\end{hypo}
Let $\Gamma=1+p\ZZ_p$. Identify $\Delta$ with $\pmb{\mu}_{p-1}$ via $\omega$ so that we have
$$\ZZ_p^\times\cong \Delta\times\Gamma.$$
Set $\LL=\oo[[\Gamma]]$. Let $\frak{h}^{\textup{ord}}$ be Hida's
universal ordinary algebra parametrizing the Hida family passing through
$f$, which is finite flat over $\LL$ by~\cite[Theorem
1.1]{hidainv86}. We will recall some basic properties of
$\frak{h}^{\textup{ord}}$, for details the reader may
consult~\cite{hidainv86, hidaens86} and \cite{emertonpollackweston}
for an excellent quick survey. The eigenform $f$ fixed as above
corresponds to an \emph{arithmetic specialization} (in the sense of
Definition~\ref{def:arithmeticspecialization} below)
$$\frak{s}_f: \frak{h}^{\textup{ord}}\lra \oo$$
%which is characterized by $\frak{s}_f(T_\ell)=a_\ell$ for $\ell \nmid Np$ and $\frak{s}_f(U_\ell)=a_\ell$ for $\ell \mid Np$.
Decompose $\frak{h}^{\textup{ord}}$ into a direct sum of its
completions at maximal ideals and let
$\frak{h}^{\textup{ord}}_{\mm}$ be the (unique) summand through
which $\frak{s}_f$ factors. The localization of
$\frak{h}^{\textup{ord}}$ at $\ker(\frak{s}_f)$ is a discrete
valuation ring~\cite[\S12.7.5]{nek}, and hence there is a unique
minimal prime $\frak{a} \subset \frak{h}^{\textup{ord}}_{\mm}$ such
that $\frak{s}_f$ factors through the integral domain \be
\label{eqn:defR} \mathcal{R}=\frak{h}^{\textup{ord}}_{\mm}/\frak{a}.
\ee

The $\LL$-algebra $\mathcal{R}$ is called the branch of the Hida
family on which $f$ lives, by duality it corresponds to a family $\mathbb{F}$ of ordinary modular forms. Hida~\cite{hidaens86} gives a
construction of a  big $G_{\QQ}$-representation $\mathbf{T}$ with
coefficients in $\mathcal{R}$, the exact definition of $\mathbf{T}$ is
recalled below. Thanks to Hypothesis~\ref{hypo1}, $\mathbf{T}$
is a free $\RR$-module of rank two. The
$G_{\QQ}$-representation $\mathbf{T}$ is unramified outside $Np$.
Let $\TT$ be the critical twist of $\mathbf{T}$, as defined by
Howard~\cite[Definition 2.1.3]{howard}. Then the
$G_\QQ$-representation is self-dual in the sense that there is a
perfect $\mathcal{R}$-bilinear pairing
$$\TT\times\TT \lra \mathcal{R}(1).$$
Fix a quadratic imaginary number field $K$ and let $\oo_K$ b its
ring of integers.  Assume until the end that the following holds:
\begin{hypo}
\label{hypo2}
\begin{itemize}
\item[(i)]There is an ideal $\frak{N}$ of $\oo_K$ such that $\oo_K/\frak{N}
\cong \ZZ/N\ZZ$.
\item[(ii)] The class number of $K$ is prime to $p$.
\end{itemize}
\end{hypo}
%Self-note: Implicitly we assume that N is square-free (Nekovar's assumption H_v requires that) and by (i) that all primes $\ell$ that divide N split in K. Thus, for K_v that appear below (v | N), we may put \QQ_v.
Let $H_{c}$ be the ring class field of $K$ of conductor ${c}$ and for $c$ prime to $p$ (resp., for $\alpha \in \ZZ^+$), let $K(c)$ (resp., $K_\alpha$) be the maximal $p$-extension in $H_c/K$ (resp., in $H_{p^{\alpha+1}}/K$). Set $K_\alpha(c)$ to be the composite field of $K_\alpha$ and $K(c)$, $K_\infty=\cup_\alpha K_\alpha$, $\Gamma^{\textup{ac}}=\textup{Gal}(K_\infty/K)$ and $\LL^{\textup{ac}}=\ZZ_p[[\Gamma^{\textup{ac}}]]$.
In~\cite[\S2.2]{howard}, Howard constructs a family of cohomology
classes $\frak{X}_{c} \in \tilde{H}_f^1(H_{c}, \TT),$
where $\tilde{H}_f^1(H_{c}, \TT)$ is Nekov\'a\v{r}'s \cite[\S6]{nek}
extended Selmer group. Howard also checks in Proposition 2.3.1 of loc.cit. that these
classes satisfy the Euler system relation. For $c$ prime to $p$, set 
$$\frak{z}_{c,\alpha}=\textup{cor}_{H_{cp^{\alpha+1}/K_\alpha(c)}} \left(U_p^{-\alpha} \frak{X}_{cp^{\alpha+1}}\right) \in \tilde{H}_f^1(K_\alpha(c),\TT).$$
This definition makes sense thanks to \cite[Proposition 2.3.1]{howard}. The collection $\{\frak{z}_{c,\alpha}\}_{_{c,\alpha}}$ is called the
\emph{big Heegner point Euler system.} To ease notation, write
$$\frak{z}_\alpha=\frak{z}_{1,\alpha}\in \tilde{H}_f^1(K_\alpha,\TT).$$ 
The collection $\{\frak{z_\alpha}\}$ is norm-compatible as $\alpha$ varies and we may therefore set $$\frak{z_\infty}=\{\frak{z}_\alpha\} \in \varprojlim_\alpha \tilde{H}_f^1(K_\alpha,\TT)=: \tilde{H}_{f,\textup{Iw}}^1(K_\infty,\TT).$$

The first (\textbf{H.stz})  of the following hypotheses may be thought of as an assumption to rule out the existence of exceptional zeros (in the sense of Greenberg~\cite{gr2}) at characters of $\Gamma^{\textup{ac}}$ of finite order. The second (\textbf{H.Tam}) has to do with Tamagawa factors.\\

(\textbf{H.stz}) For every $v|p$, $H^0(K_v,\textup{F}_v^-(\overline{T}))=0$.
\\\\Here $\overline{T}=\TT/\mm_{\RR}$ is the residual representation and $\textup{F}_v^-(\overline{T})$ is defined as in~\S\ref{sec:KSdescent}. See Remark~\ref{rem:hstzharsh?} for the content of this hypothesis. \\

(\textbf{H.Tam})
\begin{itemize}
\item [(i)] $\displaystyle{p\nmid \prod_{\ell\mid N} (\ell^2-1)}$,
\item [(ii)] there is a specialization $T$ of the twisted Hida family $\TT$ for which
$p\nmid c_\ell(T)$.\\
\end{itemize}

Starting from an Euler system for a Galois representation $M$ with
coefficients in a discrete valuation ring, Mazur and
Rubin~\cite{mr02} devise\footnote{Attentive reader will notice that
Mazur and Rubin never treat Heegner points. It was
Howard~\cite{howard-heegner1} who was the first to study the Heegner
points from the perspective offered by the work of Mazur and Rubin.}
a machinery which yields a \emph{Kolyvagin system} for $M$. This is
what we carry out for the big Galois representation $\TT$ which has
coefficients over a dimension-2 Gorenstein ring $\mathcal{R}$ and
prove the following:

\begin{Athm}\textup{[See Theorem~\ref{thm:KS}]}
\label{athm:KS} Suppose  the assumptions {\upshape{\textbf{H.Tam}}} and {\upshape\textbf{H.stz}} hold true. There is a Kolyvagin system 
$$\{\kappa_n\}=\pmb{\kappa} \in
\overline{\textbf{KS}}(\TT\otimes\LL^{\textup{ac}},\FF_{\textup{Gr}})$$
(where the $\mathcal{R}$-module
$\overline{\textbf{KS}}(\TT\otimes\LL^{\textup{ac}},\FF_{\textup{Gr}})$ is described in Definition
\ref{def:bigKS} below) such that
$$\kappa_1=\frak{z}_\infty \in \tilde{H}^1_{f,\textup{Iw}}(K_\infty,\TT).$$
\end{Athm}

Assuming ({\upshape{\textbf{H.Tam})(ii)}} alone, Howard in \cite[Theorem 2.4.5]{howard} proves that the classes $\{\frak{z}_{c,\alpha}\}$ lie in the Greenberg Selmer group. However, to carry out the descent argument (as we do in \S\ref{sec:KSdescent}) in order to deduce Theorem A.1, one needs the finer analysis of local cohomology groups that we carry out in \S\ref{sec:tamcontrol}. As a by-product to our analysis we control, among other things, the variation of Tamagawa factors in the ``Hida family"  $\TT$, much in the spirit of \cite{emertonpollackweston}. To that end, we show for a prime $\ell$ that divides the tame conductor $N$, how to interpolate the Tamagawa factors $\{c_\ell(T_S)\}_{\frak{s}}$ into an element $\pmb{\tau}$ of $\mathcal{R}$ (which we call the \emph{Tamagawa element}). Here $\frak{s}$ runs
through specializations $$\frak{s}: \mathcal{R} \lra S,\,\,\,\,\,\,\,\, T_S=\TT\otimes_{\mathcal{R}}S$$
into discrete valuation rings $S$ and the Tamagawa factor $c_\ell(T_S)$ is defined following Fontaine and Perrin-Riou~\cite{FPR91}.  More precisely, we prove the following in \S\ref{sec:tamcontrol}:

\begin{Athm}
\label{thm:tamvariation}
\begin{itemize}
\item[(i)] There exists an element $\pmb{\tau} \in \RR$ such that $[S:\frak{s}(\pmb{\tau})S]=c_\ell(T_S)$.
\item[(ii)] Assume that the hypothesis {\upshape\textbf{H.Tam}} holds true. Then for any specialization $T_S$ of $\TT$, the Tamagawa factor $c_\ell(T_S)$ is coprime to $p$ as well.
\end{itemize}
\end{Athm}

See \S\ref{sec:tamcontrol} below for a precise definition of the element $\pmb{\tau}$ of $\RR$, which might be of independent interest. We remark that Theorem A.2(ii) can be obtained without going through the construction of the element $\pmb{\tau}$, c.f., \cite[Propositions 2.2.4 and 2.2.5]{emertonpollackweston}, \cite[Theorem 3.3]{ochiai06}, \cite[Lemma 2.14]{fochiai}. However, in order to descend to a Kolyvagin system, one needs the finer analysis in \S\ref{sec:tamcontrol}.

Once we obtain a Kolyvagin system as in Theorem A.1, a standard argument
(c.f., \cite{o, fouquetRIMS}) gives
bounds on the appropriate extended Selmer group. Suppose the ring $R_\infty:=\RR\otimes_{\ZZ_p}\LL^{\textup{ac}}$ is a regular ring and $M$ is a torsion $R_\infty$-module. We define the characteristic ideal of $M$ to be
$$\textup{char}(M)=\prod_{\frak{p}}\frak{p}^{\textup{length}(M_{\frak{p}})}$$
where the product runs through height-1 primes of $R_\infty$.

\begin{Athm}\textup{[See Theorem \ref{thm:mainappl}]}
\label{athm:KSapp} Suppose  the assumptions of Theorem~A.\ref{athm:KS} hold true and assume that the ring $R_\infty$ is regular. Then 
$$\textup{char}\left(\tilde{H}^2_{f,\textup{Iw}}(K_\infty, \TT)_{\textup{tors}}\right)\,\mid \, \textup{char}\left(\tilde{H}^1_{f,\textup{Iw}}(K_\infty, \TT)/R_\infty\frak{z}_\infty\right)^2.$$
\end{Athm}

We note that Fouquet has also devised a machinery to
make use of the big Heegner point Euler system and has obtained Theorem A.\ref{athm:KSapp}~\cite[Theorem 2.9]{fouquetRIMS}. In his statement, however, the class $\frak{z}_\infty$ here has to be replaced by a multiple by an element $\alpha \in \RR$, accounting for uncontrolled Tamagawa factors. We are able to control the uncontrolled ``extra" factor $\alpha$ thanks to Theorem~A.\ref{athm:KS} and thus
improve on Fouquet's result towards Howard's two-variable main
conjecture.  In fact, we expect that the big Kolyvagin system $\pmb{\kappa}$ we construct in Theorem A.\ref{athm:KS} (under the hypotheses \textbf{H.stz} and  \textbf{H.Tam}) is \emph{primitive} in a certain sense and that the divisibility in Theorem A.\ref{athm:KS} is sharp, which is not the case in~\cite{fouquetRIMS}. All this is a consequence of the fact that Fouquet's approach is all
together different than ours: Fouquet constructs Kolyvagin systems
for each individual specialization, whereas in this paper we prove
the existence of a big Kolyvagin system that essentially
interpolates each of these individual Kolyvagin systems. 

We finally remark that the hypothesis \textbf{H.stz} seems to be also necessary for the descent arguments in \cite{fouquetRIMS} to go through, see Remark~\ref{rem:hstzharsh?} below.

When the base field is a general totally real field, there also
exists a Hida family of Hilbert modular forms and the relevant
properties of the associated big Galois representation is
established in~\cite{hidaannals88, hida89algana, SW99, SW01}. A
construction of an Euler system of big Heegner points in this
setting is due to Fouquet~\cite{fouquetthesis}, generalizing the
work of Howard. The formalism of the current article applies to this more general setting as well, and that is one
reason why the author chose to stick to the case of elliptic modular
forms so as to keep the notation simple and various technical
constructions tractable.

Our construction of a big Heegner point Kolyvagin system (Theorem~A.\ref{athm:KS}) goes hand in hand with the main result of the forthcoming article~\cite{kbbdeform}, where one may better observe the benefits of deforming Kolyvagin systems directly (as opposed to deforming first the Euler system and then specializing to Kolyvagin systems for each individual member, as done so in \cite{fouquetRIMS}.) In \cite{kbbdeform}, we prove (generalizing the main theorem of \cite{kbb}) that the $\RR$-module of Kolyvagin systems for $\mathcal{T}$ is free of rank one for a very general class of Galois representations $\mathcal{T}$ with coefficients 
\begin{itemize}
\item either a 2-dimensional complete Gorenstein  ring $\RR$, 
\item or a regular complete Noetherian local ring $\RR$.
\end{itemize}
In particular, this result may be used to interpolate Kolyvagin systems obtained from Kato's Euler systems for elliptic modular forms to the universal deformation ring, under mild hypotheses. Such a construction seems intractable for the time being in the level of Euler systems. Furthermore, one may hope to extend the arguments of \cite{kbbdeform} in order to interpolate Kato's Kolyvagin systems for each individual member of a finite slope (not necessarily $p$-ordinary) Coleman family, and incorporate this construction within Pottharst's~\cite{pottharstSelmer} non-ordinary Iwasawa theory.

\subsection{Notation and Hypotheses}
\label{subsec:notationhypo} For any field $F$, fix a separable
closure $\overline{F}$ of $F$ and write $G_F$ for the absolute
Galois group $\textup{Gal}(\overline{F}/F)$. For a continuous
$G_F$-representation $M$, we will denote by $H^i(F,M)=H^i(G_F,M)$
the cohomology group calculated with continuous cochains.

For an algebraic number field $L$ and a non-archimedean place $w$ of $L$, we write $G_w$ instead of $G_{L_w}$. We also denote any fixed decomposition subgroup (resp., inertia subgroup) at $w$ of $G_L$ by $D_w$ (resp., $I_w$).

For an abelian group $A$, let
$\hat{A}=\textup{Hom}(\textup{Hom}(A,\QQ_p/\ZZ_p),\QQ_p/\ZZ_p)$ be
its $p$-adic completion. If further $G_F$ acts on $A$, then for a
character $\chi$ of $G_F$ with values in $\oo$ (where $\oo$ is the
ring of integers of a finite extension of $\QQ_p$), let
$$A^\chi:=\{a \in \hat{A}\otimes_{\ZZ_p}\oo: g\cdot a=\chi(g)a \hbox{ for all } g\in G_F\}.$$
\section{A family of Galois representations}
\label{sec:setupGalRep}
In this section, we give the definition of the big Galois representation $\TT$ attached to a Hida family of elliptic modular forms $\mathbb{F}$. We mostly follow~\cite[\S 2]{howard} as we will heavily rely on his constructions later in this paper, and we use in this section the terminology set in loc. cit.  sometimes without giving the definition here.

Let $\frak{h}^{\textup{ord}}$ be Hida's big ordinary Hecke algebra of tame level $N$ and let $\LL= \oo[[\Gamma]]$ be the Iwasawa algebra as in the introduction. Then $\frak{h}_{\textup{ord}}$ can made into a $\LL$-algebra via the diamond action and it is finite flat over $\LL$ by~\cite[Theorem 1.1]{hidainv86}.
\begin{define}
\label{def:arithmeticspecialization}
\begin{itemize}
\item[(i)] Let $\iota$ be the natural inclusion $\ZZ_p^\times \ra \oo[[\ZZ_p^\times]]^\times$. The restriction of $\iota$ to $\Gamma$ will also be denoted by $\iota$.
\item[(ii)] If $A$ is any finitely generated commutative $\LL$-algebra then the $\oo_E$-algebra map $A \stackrel{s}{\ra} \overline{\QQ}_p$ is called an \emph{arithmetic specialization} if the composition
$$\Gamma \stackrel{\iota}{\lra} A^\times \stackrel{s}{\lra}  \overline{\QQ}_p^\times$$
has the form $\gamma \mapsto \psi(\gamma)\gamma^{r-2}$ for some integer $r\geq 2$ and some character $\psi$ of $\Gamma$ of finite order.
\item[(iii)] The kernel of an arithmetic specialization is called an \emph{arithmetic prime} of $A$. If $\wp$ is an arithmetic prime then the residue field $E_\wp:=A_\wp/\wp A_\wp$ is a finite extension of $E$. The composition
$$\Gamma \lra A^\times \lra E_\wp^\times$$
has the form $\gamma \mapsto \psi_\wp(\gamma)\gamma^{r-2}$ for a character $\psi_\wp:\Gamma \ra E_\wp^\times$. The character $\psi_\wp$ is called the wild character of $\wp$ and the integer $r$ is called the \emph{weight} of $\wp$.
\end{itemize}
\end{define}
Let $f \in S_k(\Gamma_0(Np),\omega^j)$ be a cusp form as in the introduction. As explained in Introduction, $f$ corresponds to an arithmetic specialization $\frak{s}_f$ which factors through
$$\mathcal{R}=\frak{h}^{\textup{ord}}_{\mm}/\frak{a}$$
for a unique maximal ideal $\mm \subset \frak{h}^{\textup{ord}}$ and a uniquely determined minimal prime ideal $\frak{a} \subset \frak{h}^{\textup{ord}}_{\mm}$.

For $s \in \ZZ^+$, set $\Phi_s=\Gamma_0(N)\cap \Gamma_1(p^s) \subset \textup{SL}_2(\ZZ)$ and let $Y_s$ denote the affine modular curve $Y_s$ classifying elliptic curves with $\Phi_s$-level structure. More precisely, $Y_s$ classifies triples $(E,C,\pi)$ consisting of an elliptic curve $E$, a cyclic subgroup $C$ of $E$ of order $N$ and a point $\pi$ on $E$ of exact order $p^s$. Let $X_s$ be its compactification and let $J_s$ be the Jacobian of $X_s$. Then there is a degeneracy map
$$\alpha: X_{s+1}\lra X_s$$
given by
$$(E,C,\pi) \mapsto (E,C,p\cdot\pi),$$
which induces a map
$$\alpha_*:J_{s+1}\lra J_s.$$
Let $T_p(J_s)$ be the $p$-adic Tate module. Via the Albenese action, Hecke operators acts on each  $T_p(J_s)$. Let $e^{\textup{ord}}=\lim U_p^{n!}$ be Hida's ordinary projector, set $T_p^{\textup{ord}}(J_s)=e^{\textup{ord}}(T_p(J_s))$ and
$$\mathbf{T}=(\varprojlim T_p^{\textup{ord}}(J_s))\otimes_{\frak{h}^{\textup{ord}}} \mathcal{R}.$$
Here the inverse limit is with respect to $\alpha_*$. There is a natural $\mathcal{R}$-linear $G_\QQ$-action on $\mathbf{T}$. As indicated in Introduction, $\mathbf{T}$ is a free $\mathcal{R}$-module of rank two and is unramified outside $Np$. Let $\TT$ be the self-dual twist of $\mathbf{T}$, defined as in~\cite[Definition 2.1.3]{howard} (and denoted by $\mathbf{T}^\dagger$ in loc. cit.).

For any arithmetic prime $\wp\subset \mathcal{R}$, Hida Theory associates an ordinary modular form $f_\wp$ with coefficients in $E_\wp$. The $G_\QQ$-representation $V_\wp=\TT\otimes_{\mathcal{R}} E_{\wp}$ is then a self-dual twist of the $p$-adic Galois representation attached to the form $f_\wp$ by Deligne.
\section{Controlling the Tamagawa factors attached to families}
\label{sec:tamcontrol}
 Let $\RR$ and $\TT$ be as above and let $\frak{m}$ denote the maximal ideal of $\RR$. We write
$\al=\textup{Frac}(\RR)$, the field of fractions of $\RR$ and
$\VV=\TT\otimes_{\RR}\al$. Throughout this section $v \nmid p\infty$  will
denote a place of $K$ which divides the tame conductor $N$ of
the Hida family $\RR$.

For technical reasons we also impose the following:
\begin{assume}
\label{assume:p} $\displaystyle{p \nmid \prod_{v|N} (\mathbf{N}v^2-1).}$
\end{assume}

%With more work, it should be possible to drop this condition on $p$.
 \subsection{Local Galois representation at the primes dividing the tame conductor}
 \label{subsec:localgaloistame}
 Nekov\'a\v{r} in \cite[Proposition 12.7.14.1]{nek} describes the $\al[G_v]$-module $\VV$ under the following assumption:

 ($\mathbf{\textup{H}_v}$) There is a twisted cusp form $g_{\wp}$ through which the (twisted) Hida family passes through such that $\pi(g_\wp)_v=\textup{St}(\mu)$ (with $\mu^2=1$ and unramified).
 
 Here $\pi(g_\wp)_v$ is the smooth admissible representation of $\textup{GL}_2(\QQ_v)$ %(this is the correct one, Nekovar writes this over F_v, where F is the real field)
  attached to $g_\wp$ at $v$ and $\textup{St}(\mu)$ is the twisted Steinberg, $\mu:K_v^\times \ra \mathbb{C}^\times$ is a character. We refer the reader to \cite[\S12.7.10]{nek} for the precise definition of the \emph{twisted Hida family} and the \emph{twisted cusp form}; the twisted Hida family we are interested in is denoted by $\mathcal{V}$ and a twisted cusp form that the family passes through is denoted by $g_{\mathcal{P}}$ in loc.cit.
  
 Until the end, we assume that ($\mathbf{\textup{H}_v}$) holds true.  See \cite[\S12.3 and \S12.7]{nek} for the content of the assumption  ($\mathbf{\textup{H}_v}$). 
    
  Since we also assumed Hypothesis~\ref{hypo1}, \cite[Th\'eor\`{e}me 7]{mazurtil} shows that 
 $$\frak{h}^{\textup{ord}}\cong \textup{Hom}_{\LL}(\frak{h}^{\textup{ord}},\LL).$$
 It follows then, as explained in~\cite[Proposition 12.7.14.1]{nek} that there is an exact sequence of $\RR[[G_v]]$-modules
  \be \label{seq:TT}0 \lra{F}^+(\TT) \lra \TT \lra{F}^-(\TT)  \lra 0,\ee
  with
  \be\label{eqn:grfactors} {F}^+(\TT)\cong\RR(1)\otimes\mu\,\,\,\, \hbox{ and, }\,\,\,\,   {F}^-(\TT)\cong\RR \ee
  as $\RR[[G_v]]$-modules and $\mu$ is as above.
\begin{rem}
\label{rem:fortheassholeref}
Nekov\'a\v{r}'s description of $\mathbb{V}$ in \cite[Proposition 12.7.17.1]{nek} leads to the exact sequence (\ref{seq:TT}) essentially in the same manner as Proposition~\ref{prop:ordatp} below. This is what we explain in detail in this Remark. Note that one major point for the proof of Proposition~\ref{prop:ordatp} is that the residual representation $\overline{T}$ is $p$-distinguished, in the sense that there is an exact sequence 
$$0\lra\overline{\chi}_1\lra \overline{T} \lra \overline{\chi}_2\lra 0$$
for $\mathcal{R}/\mm_{\mathcal{R}}$-valued characters $\overline{\chi}_1\neq\overline{\chi}_2$ of $\textup{Gal}(\overline{\QQ}_p/\QQ_p)$.

By Proposition 12.7.17.1(i) of \cite{nek}, the sequence 
$$0 \lra{F}^+(\mathbb{V}) \lra \mathbb{V} \lra{F}^-(\mathbb{V})  \lra 0$$
of $\mathcal{L}[G_v]$-modules is exact. Here ${F}^+(\mathbb{V}) \cong \mathcal{L}(1)\otimes\mu$ and $G_v$ acts trivially on the one dimensional $\al$-vector space ${F}^-(\mathbb{V})$. Furthermore, by Proposition 12.7.17.1(ii) of loc.cit., the residual representation $\overline{T}$ fits in an exact sequence 
$$0\lra \omega\otimes\mu \lra \overline{T} \lra \overline{\chi}_0\lra 0$$
of $\mathcal{R}/\mm_{\mathcal{R}}$-vector spaces, where $\omega$ is the mod $p$ cyclotomic character and $\chi_0$ is the trivial character. Thanks to our running assumption that $p\nmid Nv^2-1$, it follows that $ \omega\otimes\mu \neq  \overline{\chi}_0$. Set ${F}^+(\mathbb{T}):={F}^+(\mathbb{V})\cap \mathbb{T}$. We contend to prove that $F^+(\mathbb{T})$ is a  free $\mathcal{R}$-module of rank one. Indeed, note that (since $\TT/F^+(\mathbb{T})$ is $\mathcal{R}$-torsion free) we have a natural injection 
\be\label{eqn:injectioninthefirstcomponent}{F}^+(\mathbb{T})/\mm_{\RR}{F}^+(\mathbb{T})= {F}^+(\mathbb{T})/\left(\mm_{\RR}\TT\cap {F}^+(\mathbb{T})\right)\hookrightarrow \overline{T}\,.\ee
Furthermore, $G_v$ acts on the quotient $F^+(\mathbb{T})/\mm_{\RR}{F}^+(\mathbb{T})$ by $\omega\otimes\mu$, hence (using the fact that $\omega\otimes\mu \neq  \overline{\chi}_0$) it follows that (\ref{eqn:injectioninthefirstcomponent}) induces an injection
$${F}^+(\mathbb{T})/\mm_{\RR}{F}^+(\mathbb{T})\hookrightarrow \mathcal{R}/\mm_R(\omega\otimes\mu)\,.$$
Since ${F}^+(\mathbb{T})\neq 0$, this shows by Nakayama's lemma that
\be\label{eqn:tensoringexactontheleft} 
F^+(\mathbb{T})/\mm_{\RR}{F}^+(\mathbb{T})\stackrel{\sim}{\lra} \mathcal{R}/\mm_R(\omega\otimes\mu)
\ee
 and that the $\RR$-module ${F}^+(\mathbb{T})$ is cyclic and therefore free of rank one. Now set ${F}^-(\mathbb{T}):=\TT/{F}^+(\mathbb{T})$; observe that ${F}^-(\mathbb{T})\hookrightarrow {F}^-(\mathbb{V})$ and hence it is a torsion free $\RR$-module on which $G_v$ acts trivially. By tensoring the exact sequence 
 $$0 \lra{F}^+(\mathbb{T}) \lra \mathbb{T} \lra{F}^-(\mathbb{T})  \lra 0$$
 by $\RR/\mm_\RR$ and using (\ref{eqn:tensoringexactontheleft}), we have the following commutative diagram where all the vertical arrows are isomorphisms:
$$\xymatrix{0 \ar[r]&{F}^+(\mathbb{T})\otimes \mathcal{R}/\mm_R\ar[d] \ar[r] &\mathbb{T}\otimes\mathcal{R}/\mm_R \ar[r]\ar[d]&{F}^-(\mathbb{T})\otimes \mathcal{R}/\mm_R  \ar[r]\ar[d]& 0\\
0\ar[r]& \mathcal{R}/\mm_R(\omega\otimes\mu) \ar[r]& \overline{T} \ar[r]& \mathcal{R}/\mm_R\ar[r]&0}$$
This implies by Nakayama's lemma that $F^-(\mathbb{T})$ is free of rank one as well. This concludes the verification of the asserted filtration (\ref{seq:TT}) of $\TT$.
\end{rem}
 \subsection{The Tamagawa number}
 Let ${\Phi}$ be a finite extension of $\QQ_p$, and let $\frak{O}$ denote its ring of integers. Suppose $T$ is a $\frak{O}[G_v]$-module such that there is an exact sequence of $\frak{O}[G_v]$-modules
 \begin{equation}\label{eqn:T}0\lra\frak{O}(1)\otimes\mu \lra T \lra \frak{O} \lra 0.\end{equation}
  Here, $\mu: K_v^{\times}\ra \{\pm 1\}$ (which may be also thought as a character of the Galois group $G_v$ via local class field theory). Let $c_v=c_v(T)$ denote the $p$-part of the Tamagawa number at $v$, defined as in \cite{FPR91}. In what follows we will compute $c_v$ in terms of the sequence (\ref{eqn:T}). 
 \subsubsection{$\mu \neq \textup{id}$}
 \label{subsec:muneq1}
 Let $L_w$ be the extension of $K_v$ cut by $\mu$, so that $L_w/K_v$ is a quadratic extension. Let $G_w$ denote the absolute Galois group of $L_w$ and $\Delta=\textup{Gal}(L_w/K_v)$. 
 \begin{prop}
 \label{prop:h1vanishes} The following holds:
 \begin{enumerate}
 \item[(i)] $H^1(K_v,\frak{O}(1)\otimes\mu)=0$,
 \item[(ii)] $H^0(I_v,T\otimes\Phi/\frak{O})$ is divisible.
 \item[(iii)] $c_v=1$.
 \end{enumerate}
 \end{prop}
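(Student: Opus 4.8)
The plan is to exploit the fact that $\mu \neq \mathrm{id}$ is a nontrivial quadratic character, so $L_w/K_v$ is genuinely quadratic, together with Assumption~\ref{assume:p}, which forces $p \nmid (\mathbf{N}v^2 - 1)$ and in particular $p \neq 2$ and $p \nmid (\mathbf{N}v - 1)$. For part (i), I would compute $H^1(K_v, \frak{O}(1)\otimes\mu)$ by first passing to $L_w$ via the inflation-restriction sequence for $\Delta = \mathrm{Gal}(L_w/K_v)$: since $|\Delta| = 2$ is prime to $p$ (as $p$ is odd), the terms $H^i(\Delta, -)$ with coefficients in a $p$-profinite module vanish, so $H^1(K_v, \frak{O}(1)\otimes\mu) \cong H^1(L_w, \frak{O}(1))^{\Delta}$. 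By local Tate duality $H^1(L_w, \frak{O}(1))$ is (up to the usual identifications) dual to $H^1(L_w, \frak{O})$, or more directly $H^1(L_w, \frak{O}(1)) \cong \widehat{L_w^\times} \otimes_{\ZZ_p}\frak{O}$ via Kummer theory; one then checks the $\Delta$-part on which $\mu$ acts. The cleaner route is: $H^2(K_v, \frak{O}(1)) \cong \frak{O}$ and Euler characteristic gives $\# H^1 / (\# H^0 \cdot \# H^2)$ controlled by the local factor; the key input is that $H^0(K_v, \frak{O}(1)\otimes\mu/\varpi) = H^0(K_v, \frak{O}(1)\otimes\mu) \otimes \text{(torsion)}$ vanishes and $H^2(K_v, \frak{O}(1)\otimes\mu) = H^0(K_v, \frak{O}\otimes\mu^{-1})^\vee = 0$ because $\mu \neq \mathrm{id}$; hence $H^1(K_v, \Phi/\frak{O}(1)\otimes\mu)$ and $H^1(K_v, \frak{O}(1)\otimes\mu)$ have the same (trivial) size after checking the unramified $H^1$ contribution, which is $H^1_{\mathrm{ur}} = H^1(\mathrm{Frob}) $ acting on $H^0(I_v, \frak{O}(1)\otimes\mu) = \frak{O}(1)^{I_v}\otimes\mu$, and Frobenius acts by $\mathbf{N}v \cdot \mu(\mathrm{Frob})$, which is $\neq 1 \bmod p$ since $\mu(\mathrm{Frob}) = \pm 1$ and $p \nmid (\mathbf{N}v \mp 1)$ by Assumption~\ref{assume:p}. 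That is the crux of (i), and also the main obstacle: one must be careful that both the ramified and unramified parts of $H^1$ vanish, and it is precisely Assumption~\ref{assume:p} (covering both signs $\mu(\mathrm{Frob}) = \pm1$) that kills the unramified part.

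For part (ii), I would argue that $H^0(I_v, T \otimes \Phi/\frak{O})$ is divisible by showing the obstruction to divisibility lives in a group that vanishes. Concretely, from the exact sequence $0 \to \frak{O}(1)\otimes\mu \to T \to \frak{O} \to 0$ tensored with $\Phi/\frak{O}$ and taking $I_v$-invariants, divisibility of $H^0(I_v, T\otimes\Phi/\frak{O})$ will follow once we know $H^1(I_v, (\frak{O}(1)\otimes\mu)\otimes\Phi/\frak{O})$ has no $\varpi$-torsion cokernel obstruction, i.e. once the relevant connecting maps behave. The efficient formulation: $H^0(I_v, T\otimes\Phi/\frak{O})$ is divisible iff $H^0(I_v, T\otimes\Phi/\frak{O}) = H^0(I_v, T)\otimes\Phi/\frak{O}$ surjects appropriately; since $I_v$ acts on the sub $\frak{O}(1)\otimes\mu$ through the quotient $I_v \twoheadrightarrow$ (tame quotient) acting as the cyclotomic character (times the trivial-on-inertia $\mu$), and $\mathbf{N}v \not\equiv 1$, we get $(\frak{O}(1)\otimes\mu)^{I_v} = 0$ and $(\Phi/\frak{O}(1)\otimes\mu)^{I_v}$ is finite, forcing the long exact sequence to identify $H^0(I_v, T\otimes\Phi/\frak{O})$ with a subquotient of $\frak{O}\otimes\Phi/\frak{O}$, which is divisible. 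I would spell this out via the snake lemma on the mod-$\varpi^n$ levels.

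For part (iii), recall that the Tamagawa number $c_v$ (in the sense of Fontaine--Perrin-Riou) is, for the $p$-part, $c_v = \#\left( H^1_{\mathrm{ur}}(K_v, T\otimes\Phi/\frak{O})_{\mathrm{div-quotient}} \right)$ or equivalently is measured by $H^1(I_v, T)_{\mathrm{tors}}^{\mathrm{Frob}=1}$; the standard formula gives $c_v = \#\, H^0(K_v, H^1(I_v, T)_{\mathrm{tors}})$ up to the discrepancy between $T$ and its Frobenius-semisimplification. Using (i) and (ii): part (ii) says $H^0(I_v, T\otimes\Phi/\frak{O})$ is divisible, so its "non-divisible part" that contributes to $c_v$ is trivial; more precisely the $p$-part of $c_v$ equals the order of the cokernel of $H^1_f(K_v, T) \to H^1(K_v, T)$ restricted to the torsion, or the order of $H^1(I_v, T)_{\mathrm{tors}}$ coinvariants under Frobenius, and I would show this vanishes by combining the vanishing of $H^1(K_v, \frak{O}(1)\otimes\mu)$ from (i) with the divisibility from (ii) through the defining exact sequence. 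Thus $c_v = 1$. The one point requiring care is matching the various equivalent definitions of $c_v$ (via $\det(1 - \mathrm{Frob}\mid V^{I_v})$ versus via torsion in $H^1(I_v,T)$) — I would cite \cite[\S I.4]{FPR91} or \cite{nek} for the precise comparison and then the computation is immediate from (i) and (ii).
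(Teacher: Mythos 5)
Part (i) of your proposal is fine, though it takes a different route from the paper: you argue via local duality and the local Euler characteristic (computing $H^0=H^2=0$ at each finite level, with Assumption~\ref{assume:p} killing the unramified contribution $M^{I_v}/(\mathrm{Fr}_v-1)$), whereas the paper uses inflation--restriction plus Kummer theory to identify $H^1(K_v,\frak{O}(1)\otimes\mu)$ with the $\mu^{-1}$-eigenspace of $L_w^{\times,\wedge}\cong\ZZ_p$, which vanishes because $\Delta$ acts trivially on valuations. Both hinge on $p\nmid \mathbf{N}v^2-1$ and either is acceptable.

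Part (ii) has a genuine gap, in two places. First, your claim that $(\frak{O}(1)\otimes\mu)^{I_v}=0$ because inertia acts through the cyclotomic character with $\mathbf{N}v\not\equiv 1$ is false: for $v\nmid p$ the $p$-adic cyclotomic character is \emph{unramified}, so $I_v$ acts on $\frak{O}(1)$ trivially (the factor $\mathbf{N}v$ is the Frobenius action, not the inertia action), and if $\mu$ is unramified then $(\frak{O}(1)\otimes\mu)^{I_v}$ is all of $\frak{O}(1)\otimes\mu$. Second, even in the ramified case where the sub has no $I_v$-invariants, realizing $H^0(I_v,T\otimes\Phi/\frak{O})$ as the kernel of the connecting map $\Phi/\frak{O}\to H^1(I_v,\frak{O}(1)\otimes\mu\otimes\Phi/\frak{O})$ does not give divisibility: a \emph{submodule} of a divisible module need not be divisible (only quotients are), e.g.\ $\tfrac{1}{p}\ZZ_p/\ZZ_p\subset\QQ_p/\ZZ_p$. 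The missing idea, which is the heart of the paper's argument, is that (i) forces the sequence (\ref{eqn:T}) to \emph{split} as $\frak{O}[G_v]$-modules: taking $G_v$-cohomology, the vanishing of $H^1(K_v,\frak{O}(1)\otimes\mu)$ together with $(\frak{O}(1)\otimes\mu)^{G_v}=0$ gives $T^{G_v}\stackrel{\sim}{\to}\frak{O}$, i.e.\ a $G_v$-equivariant section, so $T\cong(\frak{O}(1)\otimes\mu)\oplus\frak{O}$. One then computes $(T\otimes\Phi/\frak{O})^{I_v}$ summand by summand as $\Phi/\frak{O}$ (if $\mu$ is ramified) or $(\Phi/\frak{O})^2$ (if $\mu$ is unramified), which is visibly divisible. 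Your part (iii) is then essentially the paper's: $c_v$ is the order of the Frobenius-fixed part of $H^0(I_v,T\otimes\Phi/\frak{O})$ modulo its divisible part, which vanishes by (ii); but as written it rests on the flawed (ii).
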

 \begin{proof}
 Using the inflation-restriction sequence and Kummer theory, it follows that $$H^1(K_v, \frak{O}(1)\otimes\mu)=(L_w^\times)^{\mu^{-1}}\otimes_{\ZZ_p}\frak{O}.$$ Here $(L_w^\times)^{\mu^{-1}}$ is the $\mu^{-1}$-part of the $\ZZ_p[\Delta]$-module $L_w^{\times,\wedge}$, where  $L_w^{\times,\wedge}$ is the $p$-adic completion of $L_w^{\times}$. Since $w\nmid p$ and since we assumed~\ref{assume:p}, we have a $\ZZ_p[\Delta]$-equivariant isomorphism $$\textup{ord}_w: L_w^{\times,\wedge} \stackrel{\sim}{\lra} \ZZ_p$$ with the trivial action on $\ZZ_p$. Since $\mu$ is a non-trivial character of $\Delta$, it follows that $(L_w^\times)^{\mu^{-1}}=0$ and thus (i) follows.

 Now taking the $G_v$-cohomology of the sequence (\ref{eqn:T}), using (i) and noting that the $G_v$-action on $\frak{O}\otimes\mu$ is non-trivial, it follows that 
 $$T^{G_v}\stackrel{\sim}{\lra}\frak{O}.$$
 This in return implies that the sequence (\ref{eqn:T}) splits, yielding a decomposition 
 $$T=(\frak{O}(1)\otimes\mu) \oplus \frak{O}$$
  as $\frak{O}[G_v]$-modules. It now follows that $(T\otimes\Phi/\frak{O})^{I_v}\cong \Phi/\frak{O}$ (resp., $(\Phi/\frak{O})^2$) if $\mu$ is ramified (resp., unramified).

Since  $c_v=\# \left(H^0(I_v,T\otimes\Phi/\frak{O})/H^0(I_v,T\otimes\Phi/\frak{O})_{\textup{div}}\right)^{\textup{Fr}_v=1}$, (iii) follows from (ii).
 \end{proof}
 \subsubsection{$\mu=\textup{id}$}
 \label{subsec:mu=id}
 In this case the sequence (\ref{eqn:T}) is 
  \be\label{eqn:Two} 0\lra\frak{O}(1) \lra T \lra \frak{O} \lra 0.\ee
   Let $\sigma=\partial (1)\in H^1(K_v,\frak{O}(1))$ where $\partial: \frak{O} \ra H^1(K_v,\frak{O}(1))$ is the connecting homomorphism in the long exact sequence of the $G_v$-cohomology of the sequence (\ref{eqn:Two}). Kummer theory gives an isomorphism
 $$\textup{ord}_v: H^1(K_v,\ZZ_p(1))\stackrel{\sim}{\lra} K_v^{\times,\wedge}\stackrel{\sim}{\lra} \ZZ_p, $$
  which yields an isomorphism (which we still denote by $\textup{ord}_v$) after tensoring by $\frak{O}$ 
  \be\label{eqn:ord}\textup{ord}_v: H^1(K_v,\frak{O}(1))\stackrel{\sim}{\lra} \frak{O}.\ee
 According to~\cite{cartaneilenberg} pp. 290 and 292, $-\sigma$ is the extension class
of the sequence (\ref{eqn:Two}) inside
$\textup{Ext}^1_{\frak{O}[G_v]}(\frak{O},\frak{O}(1))=H^1(K_v,\frak{O}(1))$. Hence
$\ord_v(\sigma)=0$ if and only if the sequence (\ref{eqn:Two})
splits.
  \begin{prop}
 \label{prop:tamag}
If the sequence (\ref{eqn:Two}) does not split, then  
$$c_v= \# (\frak{O}/\ord_v(\sigma)\frak{O})_{\textup{tors}}.$$
 %In particular, $c_v=1$ if and only if $\ord_v(\sigma)$ is a $p$-adic unit or the sequence~(\ref{eqn:Two}) splits.
 \end{prop}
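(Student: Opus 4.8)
The strategy is to compute directly the group whose order defines the Tamagawa factor, namely
$$c_v = \#\left(H^0(I_v, T\otimes\Phi/\frak{O})/H^0(I_v,T\otimes\Phi/\frak{O})_{\textup{div}}\right)^{\textup{Fr}_v=1},$$
using the defining sequence (\ref{eqn:Two}) in the case $\mu=\textup{id}$. First I would pass to the divisible module $A=T\otimes\Phi/\frak{O}$, which sits in an exact sequence $0\to\Phi/\frak{O}(1)\to A\to\Phi/\frak{O}\to 0$, and take $I_v$-invariants. Since $v\nmid p$, the inertia invariants of $\Phi/\frak{O}(1)$ are controlled by the cyclotomic character: as $I_v$ acts on $\Phi/\frak{O}(1)$ through a character of finite prime-to-$p$ order (in fact, $I_v$ acts trivially here since $v\nmid p$, so $\Phi/\frak{O}(1)^{I_v}$ is all of $\Phi/\frak{O}$ with $\textup{Fr}_v$ acting by $\mathbf{N}v$), I can write down the long exact inertia-cohomology sequence explicitly.

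Second, the key point is to identify the connecting map $H^0(I_v,\Phi/\frak{O})\to H^1(I_v,\Phi/\frak{O}(1))$ coming from the sequence for $A$. Because the extension class of (\ref{eqn:Two}) is $-\sigma\in H^1(K_v,\frak{O}(1))$ and because $H^1(I_v,\frak{O}(1))\cong\frak{O}/(\mathbf{N}v-1)\frak{O}\cdot\frak{O}$... more carefully: the relevant computation is that $H^0(I_v,A)$ fits in $0\to\Phi/\frak{O}\to H^0(I_v,A)\to \ker\big(\Phi/\frak{O}\xrightarrow{\delta}H^1(I_v,\Phi/\frak{O}(1))\big)\to 0$, where $\delta$ is cup product against the restriction of $\sigma$ to $I_v$. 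Under the Kummer identification $\ord_v$ and the identification of $H^1(I_v,\Phi/\frak{O}(1))$, the map $\delta$ is (up to a unit) multiplication by $\ord_v(\sigma)$ on $\Phi/\frak{O}$, whose kernel is $\ord_v(\sigma)^{-1}\frak{O}/\frak{O}$, a finite group of order $\#(\frak{O}/\ord_v(\sigma)\frak{O})$ — using that $\ord_v(\sigma)\neq 0$, i.e. the sequence does not split. Hence the quotient of $H^0(I_v,A)$ by its divisible part $\Phi/\frak{O}$ is finite of that order.

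Third, I would take $\textup{Fr}_v$-invariants of that finite quotient. Here one must check that $\textup{Fr}_v$ acts trivially on $\ker\delta$: the subgroup $\ker\delta\subset\Phi/\frak{O}$ comes from the $\textup{Fr}_v$-unipotent (weight-zero) quotient $\frak{O}$ of $T$, on which Frobenius acts trivially modulo $\frak{O}$, so $(\ker\delta)^{\textup{Fr}_v=1}=\ker\delta$, and the claimed formula $c_v=\#(\frak{O}/\ord_v(\sigma)\frak{O})_{\textup{tors}}=\#(\frak{O}/\ord_v(\sigma)\frak{O})$ follows. I would organize this by writing the two short exact sequences (inertia invariants of the $\Phi/\frak{O}(1)$-part, and the connecting-map sequence) and chasing them, invoking Assumption~\ref{assume:p} to guarantee $\mathbf{N}v-1$, $\mathbf{N}v^2-1$ etc. are $p$-adic units wherever they appear so that no spurious prime-to-$p$ or $p$-torsion contributions arise.

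**Main obstacle.** The delicate step is pinning down the connecting homomorphism $\delta$ precisely — matching the abstract extension class $-\sigma$ of (\ref{eqn:Two}) with the cup-product description of the boundary map on $I_v$-cohomology of $A$, and tracking through the identification (\ref{eqn:ord}) so that the kernel comes out as exactly $\frak{O}/\ord_v(\sigma)\frak{O}$ with no stray unit or discrepancy between $H^1(I_v,-)$ and $H^1(K_v,-)$. Once the functoriality of the boundary maps with respect to restriction $G_v\to I_v$ and with respect to the multiplication-by-$\pi^n$ maps $\frak{O}(1)\to\Phi/\frak{O}(1)$ is set up carefully, the rest is a routine diagram chase; I would isolate this identification as the technical heart of the argument and keep the surrounding inertia cohomology bookkeeping brief.
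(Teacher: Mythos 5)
Your argument is correct, and it reaches the same numerical identity by a route that is recognizably dual to the paper's. The paper works integrally in degree one: it uses the description $c_v=\#H^1(I_v,T)_{\textup{tors}}^{\textup{Fr}_v=1}$, chases the $I_v$-cohomology of $0\to\frak{O}(1)\to T\to\frak{O}\to 0$, shows $H^1(I_v,T)_{\textup{tors}}=\textup{im}(\alpha)_{\textup{tors}}$ for $\alpha:H^1(I_v,\frak{O}(1))\to H^1(I_v,T)$, and identifies $\textup{im}(\alpha)\cong H^1(K_v,\frak{O}(1))/\partial(\frak{O})\cong\frak{O}/\ord_v(\sigma)\frak{O}$, using Assumption~\ref{assume:p} to make restriction $H^1(K_v,\frak{O}(1))\to H^1(I_v,\frak{O}(1))$ an isomorphism. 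You instead use the divisible-module description $c_v=\#\bigl(H^0(I_v,T\otimes\Phi/\frak{O})/H^0(I_v,T\otimes\Phi/\frak{O})_{\textup{div}}\bigr)^{\textup{Fr}_v=1}$ (the one quoted in the proof of Proposition~\ref{prop:h1vanishes}) and compute the degree-zero connecting map $\delta$ on $I_v$-cohomology of $T\otimes\Phi/\frak{O}$ as multiplication by $\ord_v(\sigma)$, so that the non-divisible quotient is $\ker\delta\cong\tfrac{1}{\ord_v(\sigma)}\frak{O}/\frak{O}$. The two computations are exchanged by the long exact sequence of $0\to T\to T\otimes\Phi\to T\otimes\Phi/\frak{O}\to 0$ (cokernel of $\partial'$ versus kernel of $\delta$, i.e.\ the snake lemma for multiplication by $\ord_v(\sigma)$ on $0\to\frak{O}\to\Phi\to\Phi/\frak{O}\to 0$), and both hinge on the same two inputs: Assumption~\ref{assume:p} to reconcile $H^1(K_v,-)$ with $H^1(I_v,-)$, and trivial Frobenius action on the relevant finite quotient. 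The paper's version has the small advantage that $\ord_v(\sigma)$ is read off directly from $H^1(K_v,\frak{O}(1))$, where $\sigma$ lives, with no need for the cup-product identification of the boundary map on divisible coefficients that you rightly isolate as the technical heart of your write-up; your version has the advantage of staying closer to the Fontaine--Perrin-Riou definition as stated. Either way the conclusion $c_v=\#(\frak{O}/\ord_v(\sigma)\frak{O})$ follows, the torsion subscript in the statement being vacuous once $\ord_v(\sigma)\neq 0$.
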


 \begin{proof}
 By functorialty, we have the following commutative diagram with exact rows:
$$\xymatrix{\frak{O}^{G_v}\ar[d]\ar@{=}[r]&\frak{O}\ar@{=}[d] \ar[r]^(.3){\partial} &H^1(K_v,\frak{O}(1))\ar[d]^{\cong} &&\\
\frak{O}^{I_v}\ar@{=}[r]&\frak{O} \ar[r]^(.3){\partial^\prime} & H^1(I_v,\frak{O}(1)) \ar[r]^(.55){\alpha} &H^1(I_v,T) \ar[r]& H^1(I_v, \frak{O})\cong\ZZ_p
}
$$
The right most vertical arrow is an isomorphism because
\begin{itemize}
\item The cohomological dimension of $G_v/I_v$ is one therefore the map
$$H^1(K_v,\frak{O}(1)) \lra H^1(I_v,\frak{O}(1))$$
is surjective,
\item The group $G_v/I_v$ is pro-cyclic and $\textup{Fr}_v$ is a topological generator, hence for the kernel $H^1(G_v/I_v,\frak{O}(1))$ of this map we have:
$$H^1(G_v/I_v,\frak{O}(1))\cong \frak{O}(1)/(\textup{Fr}_v-1)\frak{O}(1)\cong \frak{O}/(\mathbf{N}v-1)\frak{O},$$
 and since we assumed~(\ref{assume:p}) this is trivial.
\end{itemize}

The second row of the diagram above shows that $H^1(I_v,T)_{\textup{tors}}=\textup{im}(\alpha)_{\textup{tors}}$. Furthermore, since $G_v$ acts trivially on $H^1(I_v,\frak{O}(1))$, it follows that  $\textup{im}(\alpha)_{\textup{tors}}=H^1(I_v,T)_{\textup{tors}}^{\textup{Fr}_v=1}$. Since $c_v=\#H^1(I_v,T)_{\textup{tors}}^{\textup{Fr}_v=1} $, it suffices to prove that 
$$\textup{im}(\alpha)\cong \frak{O}/\ord_v(\sigma)\frak{O}.$$
 The right most vertical isomorphism shows that 
 $$\xymatrix{\textup{im}(\alpha)\cong H^1(K_v,\frak{O}(1))/\partial(\frak{O})\ar[r]_(.65){\ord_v}^(.65){\sim}& \frak{O}/\ord_v(\sigma)\frak{O}}$$
  hence Proposition is proved.

 \end{proof}

 \subsection{The interpolation and the argument}
 \subsubsection{The case $\mu=\textup{id}$}
 \label{subsec:mu=1}
 Let $\frak{p} \subset \RR$ be an arithmetic prime and let $f_\frak{p}$ be the associated modular form attached to $\frak{p}$. Let $\oo(\frak{p})=\RR/\frak{p}$ and $\Phi(\frak{p})=\textup{Frac}\,\oo(\frak{p})$ be its field of fractions. When $\TT$ is as above, the $G_F$-representation $\TT\otimes \Phi(\frak{p})$ is the Galois representation $V(f_\frak{p})$ which is attached to $f_\frak{p}$ by Eichler, Shimura and Deligne~\cite{deligne69}.  We define the specialization map
 $$s_{\frak{p}}:\RR\lra \oo(\frak{p}).$$
 Let $S(\frak{p})$ be the integral closure of $\oo(\frak{p})$ inside $\Phi(\frak{p})$. By slight abuse, we also write $s_\frak{p}$ for the composite
 $$s_\frak{p}: \RR \lra \oo(\frak{p}) \hookrightarrow S(\frak{p}).$$
 The ring $S(\frak{p})$ is a $DVR$. We fix a uniformizer $\pi_\frak{p}$ of $S(\frak{p})$.
%Let $$\xymatrix{x_0:\RR \ar@{->>}[r]&\oo}$$ be an arithmetic specialization (the kernel of which is still denoted by $x_0$). Define $T_n=\TT\otimes_{\RR}\RR/x_0^n$ for $n\geq1$.
We write $T=\TT\otimes S(\frak{p})$ for the $S(\frak{p})$-lattice inside of $V(f_\frak{p})$. Note that $T$ fits in the exact sequence~(\ref{eqn:Two}) and the arguments of \S\ref{subsec:mu=id} apply with $\frak{O}=S(\frak{p})$.  Let $\tau=\ord_v(\sigma) \in S(\frak{p})$, where $\sigma$ is as in \S\ref{subsec:mu=id}. We remark here that we also write $\sigma$ for its image inside $H^1(I_v,S(\frak{p})(1))$ under the isomorphism
$$H^1(K_v,S(\frak{p})(1))\stackrel{\sim}{\lra}H^1(I_v,S(\frak{p})(1))$$
 which appeared in the proof of Proposition~\ref{prop:tamag}.

 Until the end of \S\ref{subsec:mu=1}, suppose the following holds:
 \begin{assume}
 \label{assume:specialtam}
 $\tau \in S(\frak{p})^\times$.
 \end{assume}
 \begin{rem}
 \label{rem:tamtrivial}
 Note that, in view of Proposition~\ref{prop:tamag} and the remark just before Proposition~\ref{prop:tamag}, Assumption~\ref{assume:specialtam} amounts to say that the Hida family $\TT$ should pass through a (twisted) modular form\footnote{In the sense we have explained at the start of Section \ref{subsec:localgaloistame}.} for which the attached Galois representation has the following properties:
  \begin{enumerate}
 \item $p$ does not divide the the Tamagawa number $\textup{Tam}_v(T)$ of the $S(\frak{p})[[G_K]]$-representation $T$ at $v$,
\item the exact sequence (\ref{eqn:Two}) of $S(\frak{p})[[G_v]]$-modules does not split (which is equivalent to saying that $T^{I_v}$ is a free $S(\frak{p})$-module of rank one).
 \end{enumerate}
\end{rem}
 \begin{define}
 Let $$\pmb{\partial}: \RR=\RR^{I_v} \lra H^1(I_v,\RR(1))$$ be the connecting homomorphism of the $I_v$-cohomology of the exact sequence~(\ref{seq:TT}) (with $\mu=1$).
 \end{define}
 Since $I_v$ acts trivially on $\RR$, we have 
 $$H^1(I_v,\RR(1))=H^1(I_v,\ZZ_p(1))\otimes_{\ZZ_p}\RR.$$
  This, together with the isomorphism~(\ref{eqn:ord}) induces  
  \be\label{eqn:ord interpolated} H^1(I_v,\RR(1)) \stackrel{\sim}{\lra} \RR\ee 
  which is in fact an isomorphism of $\RR[[G_v]]$-modules.
 \begin{define}
 Let $\pmb{\tau}\in \RR$ be the image of $\pmb{\partial}(1) \in H^1(I_v,\RR(1))$ under the isomorphism (\ref{eqn:ord interpolated}).
 The element $\pmb{\tau}$ is called the \emph{Tamagawa element} attached to the family $\TT$.
  \end{define}
 \begin{prop}
 \label{prop:unit in RR}
 Suppose Assumption~\ref{assume:specialtam} holds, then $\pmb{\tau} \in \RR^\times$.
 \end{prop}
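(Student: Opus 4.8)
The plan is to show that $\pmb{\tau}$ maps to $\tau = \ord_v(\sigma)$ under the specialization $s_\frak{p}$, so that Assumption~\ref{assume:specialtam} ($\tau \in S(\frak{p})^\times$) forces $\pmb{\tau}$ to lie outside $\frak{p}$; since this will hold for the \emph{single} arithmetic prime $\frak{p}$ supplied by $(\mathbf{H.Tam})(ii)$ together with the fact that $\RR$ is local with maximal ideal $\frak{m} \subset \frak{p}$ being impossible to conclude directly, I instead argue that $\pmb{\tau} \notin \frak{m}$. The key point is that $s_\frak{p}(\pmb{\tau})$ being a unit in $S(\frak{p})$ means $\pmb{\tau} \notin \frak{p}$, hence $\pmb{\tau} \notin \frak{m}$ (as $\frak{m} \subset \frak{p}$), and therefore $\pmb{\tau}$ is a unit in the local ring $\RR$.

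First I would establish the compatibility of the connecting maps under specialization. Tensoring the exact sequence~(\ref{seq:TT}) of $\RR[[G_v]]$-modules (in the case $\mu = 1$) by $S(\frak{p})$ over $\RR$ via $s_\frak{p}$ yields the sequence~(\ref{eqn:Two}) for $T = \TT \otimes S(\frak{p})$; here one must check exactness is preserved, which holds because ${F}^-(\TT) \cong \RR$ is free, so $\textup{Tor}_1^{\RR}({F}^-(\TT), S(\frak{p})) = 0$. Functoriality of the long exact cohomology sequence in $G_v$-cohomology (equivalently $I_v$-cohomology, since $I_v$ acts trivially on the relevant quotients) then gives a commutative square relating $\pmb{\partial}(1) \in H^1(I_v, \RR(1))$ to $\partial(1) = \sigma \in H^1(I_v, S(\frak{p})(1))$. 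Second, I would check that the trivialization isomorphism~(\ref{eqn:ord interpolated}), $H^1(I_v,\RR(1)) \xrightarrow{\sim} \RR$, is compatible with its specialized counterpart~(\ref{eqn:ord}): both are built from Kummer theory applied to $H^1(I_v,\ZZ_p(1))$ and base-changed, so the square
$$\xymatrix{H^1(I_v,\RR(1)) \ar[r]^(.6){\sim} \ar[d] & \RR \ar[d]^{s_\frak{p}} \\ H^1(I_v,S(\frak{p})(1)) \ar[r]^(.6){\sim} & S(\frak{p})}$$
commutes. Chasing $\pmb{\partial}(1)$ through both routes gives $s_\frak{p}(\pmb{\tau}) = \ord_v(\sigma) = \tau$.

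Finally, by Assumption~\ref{assume:specialtam}, $\tau = s_\frak{p}(\pmb{\tau}) \in S(\frak{p})^\times$; since $s_\frak{p}$ factors through $\RR \to \oo(\frak{p}) = \RR/\frak{p}$, the element $\pmb{\tau}$ cannot lie in $\frak{p}$, and as $\RR$ is local with $\frak{m}\subseteq\frak{p}$ for every prime, in particular $\pmb{\tau} \notin \frak{m}$, i.e.\ $\pmb{\tau} \in \RR^\times$.

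I expect the main obstacle to be the second step: pinning down precisely why the integral trivialization~(\ref{eqn:ord interpolated}) is canonical enough to be compatible with specialization. One must verify that the isomorphism $H^1(I_v,\ZZ_p(1)) \xrightarrow{\sim} \ZZ_p$ used to define both~(\ref{eqn:ord}) and~(\ref{eqn:ord interpolated}) is the \emph{same} fixed choice (independent of $\frak{p}$), and that base change $- \otimes_{\ZZ_p} S(\frak{p})$ genuinely commutes with the connecting homomorphism — this is where Assumption~\ref{assume:p} ($p \nmid \mathbf{N}v - 1$) re-enters, guaranteeing that $H^1(G_v/I_v,\ZZ_p(1)) = 0$ so that restriction to inertia is an isomorphism both before and after specialization, and that no $p$-torsion obstruction to flatness appears. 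The cohomological bookkeeping with $\textup{Tor}$ and the pro-cyclic quotient $G_v/I_v$ is routine but needs to be spelled out carefully to make the diagram chase legitimate.
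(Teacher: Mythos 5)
Your first two steps --- the compatibility of the connecting homomorphisms and of the Kummer-theoretic trivializations under $s_\frak{p}$, yielding $s_\frak{p}(\pmb{\tau})=\tau$ --- are exactly what the paper records in its first commutative diagram, and your attention to why the trivialization is canonical (Assumption~\ref{assume:p} killing $H^1(G_v/I_v,\ZZ_p(1))$) is sound. The gap is in the final step. In the local ring $\RR$ the containment goes the other way from what you wrote: every prime ideal satisfies $\frak{p}\subseteq\frak{m}$, not $\frak{m}\subseteq\frak{p}$. Consequently $\pmb{\tau}\notin\frak{p}$ does \emph{not} imply $\pmb{\tau}\notin\frak{m}$; for instance in $\ZZ_p[[x]]$ the element $p$ lies outside the prime $(x)$ but inside the maximal ideal $(p,x)$. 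Note also that the only input your argument extracts from Assumption~\ref{assume:specialtam} is that $\tau\neq 0$ in $S(\frak{p})$ (equivalently $\pmb{\tau}\notin\frak{p}$), which is strictly weaker than $\tau$ being a unit, so the argument could not possibly close as written.

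The repair, which is the paper's second diagram chase, uses the full strength of the unit hypothesis together with one further reduction. Since $\tau\in S(\frak{p})^\times$, its image $\bar{\tau}$ in the residue field $k(\frak{p})=S(\frak{p})/\pi_\frak{p}S(\frak{p})$ is nonzero. The composite $\RR\to S(\frak{p})\to k(\frak{p})$ has image a finite domain inside the field $k(\frak{p})$, hence a field; its kernel is therefore the maximal ideal $\frak{m}$, and the composite factors through an injection $\RR/\frak{m}\hookrightarrow k(\frak{p})$ carrying $\pmb{\tau}\bmod\frak{m}$ to $\bar{\tau}$. Thus $\bar{\tau}\neq 0$ forces the image of $\pmb{\tau}$ in $\RR/\frak{m}$ to be nonzero, i.e.\ $\pmb{\tau}\notin\frak{m}$, and since $\RR$ is local this gives $\pmb{\tau}\in\RR^\times$.
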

 \begin{proof}
 We have a commutative diagram\\\\
 $$ \xymatrix@C=.29in@R=.2in{1 \ar @{|->}[dd] \ar @/^2pc/@{|->}[rrrrrr]&**[l]\in&\RR \ar[dd]^{s_\frak{p}}\ar[r]^(.34){\pmb{\partial}}& H^1(I_v,\RR(1)) \ar[dd]^{s_\frak{p}} \ar[r]^(.68){\sim}&\RR\ar[dd]^{s_\frak{p}}&**[l] \ni&\pmb{\tau} \ar @{|->}[dd]\\
&&&&&&\\
1\ar @/_2pc/@{|->}[rrrrrr]&**[l]\in&S(\frak{p})\ar[r] & H^1(I_v,S(\frak{p})(1))\ar[r]^(.67){\sim}&S(\frak{p}) &**[l] \ni &\tau
 }$$%\\\\ (recall the isomorphism $\RR/x_0 \stackrel{\sim}{\lra} \oo$ induced by the specialization $x_0$). Assumption~\ref{assume:specialtam} says that $\tau \in \oo^\times$. This together with the diagram above, our assumption that $\RR$ is a power series ring in one variable and Weirstrass preparation shows that $\pmb{\tau}\in \RR^\times$, as desired.
 \\\\
Let $\bar{\tau}$ be the image of $\tau$ under the map
 $$S(\frak{p}) \lra k(\frak{p}):=S(\frak{p})/\pi_\frak{p}S(\frak{p}).$$
 By our assumption that $\tau \in S(\frak{p})^\times$, it follows that $\bar{\tau} \neq 0$.  Furthermore, there is a commutative diagram
 $$\xymatrix@C=.15in@R=.15in{\pmb{\tau}\ar @/_4pc/@{|->}[ddrrrrrrr] &\in&\RR\ar@{-{>>}}[rdd]\ar[rr]^{s_\frak{p}}&&\oo(\frak{p})\ar@{^{(}->}[rr]&&S(\frak{p})\ar@{-{>>}}[ddl]\\
&&&&&&&\\
&&&\RR/\frak{m}\ar@{^{(}->}[rr]&&k(\frak{p})&\ni&\bar{\tau}
} $$\\\\
where the injection $\RR/\frak{m} \hookrightarrow k(\frak{p})$ is because $\RR/\frak{m}$ is the unique field that $\RR$ surjects onto. This shows that the image of $\tau$ under the natural map $\RR\ra\RR/\frak{m}$ is non-zero, and Proposition follows.
 \end{proof}
 %What is excluded below concerns the case $\RR=\oo[[\Gamma]]$.
% Let $\textup{Tam}_v(T_n)$ denote the Tamagawa number at $v$ for the $\oo[[G_F]]$-representation $T_n$. Note that $T_n$ is a free $\oo$-module of finite rank and we use the definition of \cite{FPR91} of Tamagawa factors.
 %\begin{prop}
 %Under Assumption~\ref{assume:specialtam},   $p$ does not divide $\textup{Tam}_v(T_n)$.
 %\end{prop}
 %\begin{proof}
 %The square in the following diagram commutes and the second row is exact:\\\\
 %$$\xymatrix@C=.115in{1\ar@{|->}[d]\ar @/^2.5pc/@{|->}[rrrrr] &\in& \RR\ar[d] \ar[r]^(.26){\pmb{\partial}}&H^1(I_v,\RR(1))\cong\RR\ar[d]&**[r]\ni& **[l]\pmb{\tau}\\
 %1 &\in& \RR/x_0^n \ar[r]_(.29){\partial_n}&H^1\left(I_v,\left(\RR/x_0^n\right)(1)\right)\ar[r]&H^1(I_v,T_n) \ar[r]&H^1(I_v,\RR/x_0^n)\ar[r]&0
 %}
%$$

%Since $\pmb{\tau}\in \RR^\times$ by Proposition~\ref{prop:unit in RR}, it follows at once that the map $\partial_n$ is surjective hence the second row of the diagram reduces to the isomorphism of $\oo$-modules:
%$$\xymatrix{H^1(I_v,T_n)\ar[r]^(.45){\sim} &H^1(I_v,\RR/x_0^n).}$$
%The $\oo$-module $H^1(I_v,\RR/x_0^n)\cong \RR/x_0^n$ is $\oo$-torsion free, hence $H^1(I_v,T_n)$ is also $\oo$-torsion free. This completes the proof of the Proposition.
 %\end{proof}

Let now
$$s: \RR \lra S$$
 be any specialization, where $S$ is a discrete valuation ring with uniformizer $\pi_S$. We set $T_S=\TT\otimes_{\RR}S$. Let $\textup{Tam}_v(T_S)$ denote the Tamagawa number at $v$ for the $S[[G_F]]$-representation $T_S$. Note that $T_S$ is a free $S$-module of rank two and as remarked above, we use the definition of \cite{FPR91} of Tamagawa factors.
\begin{prop}
\label{prop:tamuniformity}
 Suppose that Assumption~\ref{assume:specialtam} above holds true. Then $\textup{Tam}_v(T_S)$ is prime to $p$.
\end{prop}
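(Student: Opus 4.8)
The plan is to base-change the local exact sequence~\eqref{seq:TT} along $s$ and then feed the result into the local Tamagawa computations of \S\ref{subsec:muneq1}--\S\ref{subsec:mu=id}, using the fact, established in Proposition~\ref{prop:unit in RR}, that Assumption~\ref{assume:specialtam} forces $\pmb{\tau}\in\RR^\times$.

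First I would note that tensoring~\eqref{seq:TT} with $S$ over $\RR$ preserves exactness, since ${F}^-(\TT)\cong\RR$ is $\RR$-free and hence $\textup{Tor}_1^{\RR}({F}^-(\TT),S)=0$. Thus $T_S$ fits into an exact sequence of $S[[G_v]]$-modules
\[
0\lra S(1)\otimes\mu\lra T_S\lra S\lra 0,
\]
which has exactly the shape of~\eqref{eqn:T} with $\frak{O}=S$, and with the \emph{same} unramified character $\mu$ (satisfying $\mu^2=1$) attached to the family $\TT$. In particular the alternative ``$\mu=\textup{id}$'' versus ``$\mu\neq\textup{id}$'' depends only on $\TT$ and not on the chosen specialization $s$, and the arguments of \S\ref{subsec:muneq1} and \S\ref{subsec:mu=id} apply with $\frak{O}=S$ (Assumption~\ref{assume:p} being in force throughout this section).

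If $\mu\neq\textup{id}$, Proposition~\ref{prop:h1vanishes}(iii) gives $c_v(T_S)=1$, so $\textup{Tam}_v(T_S)$ is prime to $p$ and there is nothing more to do. In the remaining case $\mu=\textup{id}$, let $\sigma_S=\partial_S(1)\in H^1(K_v,S(1))$ be the extension class of the base-changed sequence~\eqref{eqn:Two} for $T_S$. The connecting homomorphism and the Kummer-theoretic trivialization~\eqref{eqn:ord} are functorial in the coefficient ring, so the commutative square appearing in the proof of Proposition~\ref{prop:unit in RR}, now with $S(\frak{p})$ replaced by the arbitrary DVR $S$ and $s_\frak{p}$ replaced by $s$, yields
\[
s(\pmb{\tau})=\ord_v(\sigma_S)\in S .
\]
Since $\pmb{\tau}\in\RR^\times$ by Proposition~\ref{prop:unit in RR}, it follows that $\ord_v(\sigma_S)=s(\pmb{\tau})\in S^\times$; in particular $\sigma_S\neq0$, so by the discussion preceding Proposition~\ref{prop:tamag} the sequence~\eqref{eqn:Two} for $T_S$ does not split. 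Proposition~\ref{prop:tamag} then gives
\[
c_v(T_S)=\#\big(S/\ord_v(\sigma_S)\,S\big)_{\textup{tors}}=\#\big(S/S\big)_{\textup{tors}}=1,
\]
so the $p$-part of $\textup{Tam}_v(T_S)$ is again trivial, which proves the proposition.

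The only real obstacle is the functoriality bookkeeping in the $\mu=\textup{id}$ case: one must verify carefully that the connecting homomorphism $\pmb{\partial}$ of~\eqref{seq:TT} and the trivialization~\eqref{eqn:ord interpolated} are compatible with an \emph{arbitrary} specialization $s:\RR\to S$ (not merely with the arithmetic specializations used in \S\ref{subsec:mu=1}), so that $s(\pmb{\tau})$ really is the class $\ord_v(\sigma_S)$ governing $\textup{Tam}_v(T_S)$; this requires a little care with base change of Galois cohomology and of Kummer theory at the place $v\nmid p$, together with the observation that the residual sequence does not split because $\ord_v(\sigma_S)$ is a unit. Granting this compatibility, the statement is an immediate consequence of Propositions~\ref{prop:h1vanishes}, \ref{prop:tamag} and~\ref{prop:unit in RR}.
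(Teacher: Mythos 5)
Your argument is correct and is essentially the paper's own proof: both rest on base-changing the local sequence along $s:\RR\to S$, using the functoriality of the connecting map and the Kummer trivialization to get $s(\pmb{\tau})=\ord_v(\sigma_S)$, and then invoking Proposition~\ref{prop:tamag} with $\frak{O}=S$ and $T=T_S$. The only (harmless) differences are that you deduce $\ord_v(\sigma_S)\in S^\times$ directly from the fact that ring homomorphisms carry units to units, whereas the paper passes through the residue fields $\RR/\frak{m}\hookrightarrow S/\pi_S S$, and that you also treat the case $\mu\neq\textup{id}$, which the paper defers to Proposition~\ref{prop:tamtrivmuneq1}.
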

\begin{proof}
As in the proof of Proposition~\ref{prop:unit in RR}, there is a commutative diagram\\
 $$ \xymatrix@C=.29in@R=.2in{1 \ar @{|->}[dd] \ar @/^2pc/@{|->}[rrrrrr]&**[l]\in&\RR \ar[dd]^{s}\ar[r]^(.34){\pmb{\partial}}& H^1(I_v,\RR(1)) \ar[dd]^{s} \ar[r]^(.68){\sim}&\RR\ar[dd]^{s}&**[l] \ni&\pmb{\tau} \ar @{|->}[dd]\\
&&&&&&\\
1\ar @/_2pc/@{|->}[rrrrrr]&**[l]\in&S\ar[r] & H^1(I_v,S(1))\ar[r]^(.67){\sim}&S&**[l] \ni &\tau_S
 }$$
 \\\\\\
 Also, the commutative diagram\\
 $$\xymatrix@C=.2in{ \pmb{\tau}\ar @/^3.1pc/@{|->}[drrrrrr] \ar@{|->}[d]& \in & \RR\ar[rr]^{s}\ar[d]&&S\ar[d]&&\\
 \bar{\pmb{\tau}}\ar @/_1.5pc/@{|->}[rrrrrr]&\in&\RR/\frak{m}\ar@{^{(}->}[rr]&&S/\pi_SS&\ni&\overline{\tau_S}
 }$$\\\\
 shows that $\tau_S \in S^\times$, since $\bar{\pmb{\tau}}\neq 0$ by Proposition~\ref{prop:unit in RR}. This completes the proof by making use of Proposition~\ref{prop:tamag} with $\frak{O}=S$ and $T=T_S$.
\end{proof}
 \subsubsection{The case $\mu\neq\textup{id}$}
 The case $\mu\neq\textup{id}$ (in (\ref{seq:TT})) is handled just as in \S\ref{subsec:muneq1}. Following the proof of Proposition~\ref{prop:h1vanishes}, one is able to prove:
 \begin{prop}
 \label{prop:tamtrivmuneq1}
 Suppose $T_S$ is as in~\S\ref{subsec:mu=1}. Then under Assumption~\ref{assume:specialtam}, $p \nmid \textup{Tam}_v(T_S)$.
 \end{prop}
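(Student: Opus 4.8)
The plan is to run the specialization argument of \S\ref{subsec:mu=1} --- the one behind Propositions~\ref{prop:unit in RR} and~\ref{prop:tamuniformity} --- but with Proposition~\ref{prop:h1vanishes} (the analysis of the case $\mu\neq\textup{id}$) now playing the role that Proposition~\ref{prop:tamag} played in the split-multiplicative case. Thus $T_S=\TT\otimes_\RR S$ is the specialization attached to an arbitrary $s:\RR\to S$ into a discrete valuation ring, exactly as at the end of \S\ref{subsec:mu=1}, and we work under Assumption~\ref{assume:specialtam}. First I would record that, since we are in the case $\mu\neq\textup{id}$ of~(\ref{seq:TT}), the quotient ${F}^-(\TT)\cong\RR$ in~(\ref{eqn:grfactors}) is $\RR$-free; hence the short exact sequence~(\ref{seq:TT}) of $\RR[[G_v]]$-modules splits as a sequence of $\RR$-modules and therefore stays exact after applying $-\otimes_\RR S$. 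Consequently $T_S$ sits in a short exact sequence of $S[[G_v]]$-modules
$$0\lra S(1)\otimes\mu\lra T_S\lra S\lra 0,$$
which is of the shape~(\ref{eqn:T}) with $\frak{O}=S$; here $\mu$ is the same unramified character of~(\ref{eqn:grfactors}), and by $(\mathbf{H}_v)$ it has $\mu^2=1$, so $\mu:K_v^\times\to\{\pm1\}$ is an unramified quadratic character, which in the present case is $\neq\textup{id}$.

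Next I would invoke Proposition~\ref{prop:h1vanishes}, applied verbatim with $\frak{O}=S$ and $T=T_S$. Part~(i) gives $H^1(K_v,S(1)\otimes\mu)=(L_w^\times)^{\mu^{-1}}\otimes_{\ZZ_p}S$, where $L_w/K_v$ is the (unramified) quadratic extension cut out by $\mu$ and $\Delta=\textup{Gal}(L_w/K_v)$; since $w\nmid p$, Assumption~\ref{assume:p} supplies the $\ZZ_p[\Delta]$-equivariant isomorphism $\ord_w:L_w^{\times,\wedge}\stackrel{\sim}{\to}\ZZ_p$ with trivial $\Delta$-action, so the nontrivial isotypic piece $(L_w^\times)^{\mu^{-1}}$ vanishes and $H^1(K_v,S(1)\otimes\mu)=0$. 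Feeding this into the $G_v$-cohomology of the displayed sequence forces $T_S^{G_v}\stackrel{\sim}{\to}S$, so the sequence splits: $T_S\cong(S(1)\otimes\mu)\oplus S$ as $S[[G_v]]$-modules. Since $\mu$ is unramified, $H^0(I_v,T_S\otimes\textup{Frac}(S)/S)\cong(\textup{Frac}(S)/S)^2$ is divisible, and hence by part~(iii) the $p$-part of the Tamagawa number is $c_v(T_S)=1$; in particular $p\nmid\textup{Tam}_v(T_S)$, as asserted.

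The whole argument is a base-changed rerun of Proposition~\ref{prop:h1vanishes}, so I do not anticipate a serious obstacle; if anything, this case is gentler than the case $\mu=\textup{id}$, because the local sequence~(\ref{eqn:T}) splits of its own accord and the length computation of Proposition~\ref{prop:tamag} degenerates. The two points that deserve a line are that forming the $\mu^{-1}$-isotypic component of a $\ZZ_p[\Delta]$-module commutes with $-\otimes_{\ZZ_p}S$ --- harmless, since $\#\Delta=2$ is invertible in $S$ as $p\nmid 6N$ --- and that Assumption~\ref{assume:p}, being a condition on the residue characteristic of $v$ alone, is unaffected when one passes from $\RR$ to $S$. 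Granted these, the Tamagawa computation of Proposition~\ref{prop:h1vanishes} transfers word for word to $T_S$, completing the proof under Assumption~\ref{assume:specialtam}.
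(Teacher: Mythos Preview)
Your proof is correct and follows exactly the approach the paper intends: the paper's own proof is merely the one-line remark that the case $\mu\neq\textup{id}$ ``is handled just as in \S\ref{subsec:muneq1}'' by following the proof of Proposition~\ref{prop:h1vanishes}, and you have faithfully expanded that remark by rerunning Proposition~\ref{prop:h1vanishes} with $\frak{O}=S$ and $T=T_S$. Your observation that Assumption~\ref{assume:specialtam} plays no actual role here---the Tamagawa number is $1$ outright once Assumption~\ref{assume:p} is in force---is correct and worth making explicit, even though the paper does not.
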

 The following Lemma will be crucial when checking the local properties of the big Heegner point Euler system.
\begin{lemma}
\label{lemma:torsionfree}
Let $\frak{L}$ be any unramified $p$-extension of $K_v$. Then under Assumption~\ref{assume:specialtam}, the $\RR$-module $H^1(\frak{L}^{\textup{ur}},\TT)$ is torsion-free.
\end{lemma}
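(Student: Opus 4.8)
The plan is to compute $H^1(\frak{L}^{\textup{ur}},\TT)$ via the exact sequence (\ref{seq:TT}) of $\RR[[G_v]]$-modules restricted to the absolute Galois group $G$ of $\frak{L}^{\textup{ur}}$, reducing the question to the two graded pieces ${F}^+(\TT)\cong\RR(1)\otimes\mu$ and ${F}^-(\TT)\cong\RR$. First I would record that $G:=G_{\frak{L}^{\textup{ur}}}$ is the maximal pro-prime-to-$p$ part of the inertia together with... more precisely, since $\frak{L}$ is an unramified $p$-extension of $K_v$ and $v\nmid p$, the group $\Gal(\overline{K_v}/\frak{L}^{\textup{ur}})$ is a pro-$\ell$ group (for $\ell$ the residue characteristic, $\ell\neq p$) with $p$-cohomological dimension $1$, and on the unramified quotient the Frobenius has already been ``used up''. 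Taking $G$-cohomology of (\ref{seq:TT}) gives
\begin{equation}
H^1(\frak{L}^{\textup{ur}},{F}^+(\TT)) \lra H^1(\frak{L}^{\textup{ur}},\TT) \lra H^1(\frak{L}^{\textup{ur}},{F}^-(\TT)) \lra H^2(\frak{L}^{\textup{ur}},{F}^+(\TT)),
\end{equation}
and since $\textup{cd}_p(G)\leq 1$ the last term vanishes; so it suffices to show that the outer two $H^1$'s are torsion-free $\RR$-modules and that the torsion submodule injects appropriately.

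For the ${F}^-(\TT)\cong\RR$ piece: since $G$ acts trivially on $\RR$, $H^1(\frak{L}^{\textup{ur}},\RR)=\Hom_{\textup{cont}}(G,\RR)$. Because $G^{\textup{ab}}$ is (the $p$-adic completion of) a procyclic-times-prime-to-$p$ group — concretely its pro-$p$ completion is free of rank one, coming from the tame character — this Hom group is a free $\RR$-module of rank one, hence torsion-free. For the ${F}^+(\TT)\cong\RR(1)\otimes\mu$ piece I would argue exactly as in the proof of Proposition~\ref{prop:h1vanishes}(i): by inflation–restriction from $G_{\frak{L}^{\textup{ur}}}$ up through the field cut out by $\mu$ and Kummer theory, $H^1(\frak{L}^{\textup{ur}},\RR(1)\otimes\mu)$ is identified with a $\mu^{-1}$-eigenspace of a $p$-adic completion of a local unit/valuation group, which by Assumption~\ref{assume:p} is identified (via $\ord_w$) with a free $\RR$-module (or, when $\mu\neq 1$, vanishes); either way it is torsion-free. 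The connecting map $H^0(\frak{L}^{\textup{ur}},{F}^-(\TT))\to H^1(\frak{L}^{\textup{ur}},{F}^+(\TT))$ is, up to the identification (\ref{eqn:ord interpolated}), multiplication by $\pmb{\tau}$, which is a unit by Proposition~\ref{prop:unit in RR} (using Assumption~\ref{assume:specialtam}); hence $H^1(\frak{L}^{\textup{ur}},{F}^+(\TT))/\textup{im}(\partial)$ is still torsion-free, and the middle term is an extension of (a submodule of) a torsion-free module by a torsion-free module, so it is torsion-free.

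The main obstacle I expect is bookkeeping the case distinction $\mu=\textup{id}$ versus $\mu\neq\textup{id}$ cleanly and making sure the identification of the connecting homomorphism with multiplication by $\pmb{\tau}$ is valid over the ring $\RR$ (not just after specialization): this is where Assumption~\ref{assume:specialtam}, via Proposition~\ref{prop:unit in RR}, does the real work — without it the connecting map could have torsion cokernel and the argument collapses. A secondary technical point is justifying $\textup{cd}_p(G_{\frak{L}^{\textup{ur}}})\leq 1$ and the freeness of $\Hom_{\textup{cont}}(G^{\textup{ab}},\RR)$ for the profinite group in question; both are standard local Galois cohomology facts (the relevant extension being tamely ramified with $\ell\neq p$), and I would simply cite the appropriate statements from local class field theory / \cite{nek}.
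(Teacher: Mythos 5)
Your proposal is correct and follows essentially the same route as the paper: take the long exact $G_{\frak{L}^{\textup{ur}}}$-cohomology sequence of $0\to\RR(1)\otimes\mu\to\TT\to\RR\to 0$, observe that both graded $H^1$'s are torsion-free, and use that the connecting map $\partial$ sends $1\mapsto\pmb{\tau}$, a unit by Proposition~\ref{prop:unit in RR}, so that $H^1(\frak{L}^{\textup{ur}},\TT)$ embeds into the torsion-free module $H^1(\frak{L}^{\textup{ur}},\RR)$. The only slip is your parenthetical that $H^1(\frak{L}^{\textup{ur}},\RR(1)\otimes\mu)$ \emph{vanishes} when $\mu\neq\textup{id}$ -- the vanishing in Proposition~\ref{prop:h1vanishes}(i) is over $K_v$, not over $\frak{L}^{\textup{ur}}$ (where this group is a nonzero $\Hom$ from inertia, handled in the paper by the splitting of the sequence and, for ramified $\mu$, a quadratic base change) -- but your fallback claim that it is in any case torsion-free is what the argument actually needs, and it holds.
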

\begin{proof}
As the Assumption ($\mathbf{\textup{H}_v}$) is in effect, the $G_v$-representation $\TT$ fits in an exact sequence
$$0 \lra \RR(1)\otimes\mu \lra \TT \lra R \lra 0,$$
where the character $\mu$ is described above.

In the case $\mu\neq \textup{id}$, the proof of Proposition~\ref{prop:h1vanishes} shows that $\TT=(\RR(1)\otimes\mu) \oplus \RR$ as $G_v$-modules. Thus, we need to verify that the $\RR$-modules $H^1(\frak{L}^{\textup{ur}}, \RR(1)\otimes\mu)$ and $H^1(\frak{L}^{\textup{ur}}, \RR)$ are torsion-free. For the module $$H^1(\frak{L}^{\textup{ur}}, \RR)\cong \textup{Hom}(\textup{Gal}(\overline{K}_v/\frak{L}^{\textup{ur}}),\RR)$$
this is clear. Also, when $\mu$ is an unramified character of $G_v$, it follows that the inertia acts trivially on  $\RR(1)\otimes\mu$ and hence $$H^1(\frak{L}^{\textup{ur}},\RR(1)\otimes\mu)\cong  \textup{Hom}(\textup{Gal}(\overline{K}_v/\frak{L}^{\textup{ur}}),\RR(1)\otimes\mu)\cong \textup{Hom}(\textup{Gal}(\overline{K}_v/\frak{L}^{\textup{ur}}),\RR)$$ is free as well. When $\mu$ is ramified, the argument carries over after replacing $\frak{L}^{\textup{ur}}$ by a quadratic extension.
%The inflation-restriction sequence, along with the fact that the cohomological dimension of $\textup{Gal}(\frak{L}^{\textup{ur}}/\frak{L})$ is one, yields an exact %sequence
%$$0\lra H^1(\frak{L}^{\textup{ur}}/\frak{L},\ZZ_p(1)\otimes\mu) \lra $$

In the case $\mu=\id$, we have the following exact sequence as in the proof of Proposition~\ref{prop:tamag}:

$$\xymatrix{%H^0(\frak{L},\RR)\ar[d]\ar@{=}[r]&\RR\ar@{=}[d] \ar[r]^(.3){\partial} &H^1(\frak{L},\RR(1))\ar[d]^{\cong} &&\\
 \RR=H^0(\frak{L}^{\textup{ur}},\RR) \ar[r]^(.53){\partial} & H^1(\frak{L}^{\textup{ur}},\RR(1)) \ar[r]^(.55){\alpha} &H^1(\frak{L}^{\textup{ur}},\TT) \ar[r]& H^1(\frak{L}^{\textup{ur}}, \RR)\cong\RR
}
$$
%The right most vertical arrow is an isomorphism because
%\begin{itemize}
%\item $G_\frak{L}$ acts trivially on $H^1(\frak{L}^{\textup{ur}},\RR(1))$.
%\item The cohomological dimension of $\textup{Gal}(\frak{L}^{\textup{ur}}/\frak{L})$ is one, therefore the map
%$$H^1(\frak{L},\RR(1)) \lra H^1(\frak{L}^{\textup{ur}},\RR(1))$$
%is surjective.
%\item Let $w$ be the maximal ideal of $\frak{L}$. Then the group $\textup{Gal}(\frak{L}^{\textup{ur}}/\frak{L})$ is pro-cyclic and $\textup{Fr}_w$ is a topological generator, hence for the kernel $H^1(\frak{L}^{\textup{ur}}/\frak{L},\RR(1))$ of this map we have:
%$$H^1(\frak{L}^{\textup{ur}}/\frak{L},\RR(1))\cong \RR(1)/(\textup{Fr}_w-1)\RR(1)\cong \RR/(\mathbf{N}w-1)\RR,$$
 %and since we assumed~\ref{assume:p} and $\LL/K_v$ is a $p$-extension, this is trivial.
%\end{itemize}
Since $\frak{L}/K_v$ is unramified, the proof of Proposition~\ref{prop:unit in RR} shows that the map $\partial$ is surjective under Assumption~(\ref{assume:specialtam}), hence the map $\alpha$ is injective. The proof is now complete.
\end{proof}
\section{Kolyvagin descent for the big Heegner points}
\label{sec:KSdescent}
\subsection{Selmer groups}
\subsubsection{Local conditions and Selmer structures} Throughout this section, let $L$ be a finite extension of $K$ and for each prime $v$ of $L$ define $L_v^{\textup{unr}}$ as the maximal unramified extension of $L_v$. %Fixing a prime $\bar{v}$ of $\overline{Q}$ above $v$,
Let $\mathcal{I}_v\subset \mathcal{D}_v$ be a fixed choice of inertia and decomposition groups of $v$. Let $R$ be any local Noetherian ring and $M$ any $R[[G_{K}]]$-module.
\begin{define}
\label{selmer structure}
A \emph{Selmer structure} $\FF$ on $M$ is a collection of the following data:
\begin{itemize}
%\item a finite set $\Sigma(\FF)$ of places of $L$, including all infinite places and primes above $p$, and all primes where $M$ is ramified.
\item For every $v\mid Np$, choose a local condition on $M$ (which we view now as a ${R}[[\mathcal{D}_{v}]]$-module), i.e., a choice of ${R}$-submodule
 $$H^1_{\FF}(L_{v},M) \subset H^1(L_{v},M).$$
\item For $v \nmid Np$, set
$$H^1_{\FF}(L_{v},M)=H^1_{f}(L_{v},M):=\ker\left(H^1(L_v,M)\ra H^1(L_v^{\textup{unr}},M)\right)$$

%, where the module $H^1_{{f}}(k_{\lambda},M)$ is the \emph{finite} part of $H^1(k_{\lambda},M)$, defined as in~\cite[Definition 1.1.6]{mr02}.

 \end{itemize}
\end{define}

\begin{define}
 The \emph{semi-local cohomology group} at a rational
prime $\ell$ is defined by setting
$$H^i(L_\ell,M):=\bigoplus_{v|\ell} H^i(L_v,M),$$
where the direct sum is over all primes $v$ of $L$ lying above
$\ell$.
\end{define}
\begin{define}
\label{selmer group}
If $\FF$ is a Selmer structure on $M$, we define the \emph{Selmer module} $H^1_{\FF}(L,M)$ as
 $$H^1_{\FF}(L,M):=\ker\left(H^1(L,M) \lra \prod_v H^1(L_v,M)/H^1_{\FF}(L_v,M)\right).$$
 \end{define}

\subsubsection{Greenberg Conditions}
Let $\TT$ be the big self-dual Galois representation attached to a Hida family.
\begin{prop}
\label{prop:ordatp}
Suppose $v$ is any place of $L$ above $p$. Then there is an exact sequence of $\RR[[D_v]]$-modules
\be\label{seq:ordatp}0\lra \textup{F}^+_v(\TT)\lra \TT \lra \textup{F}^-_v(\TT)\lra 0\ee
 such that both $\textup{F}^+_v(\TT)$ and $\textup{F}^-_v(\TT)$ are free of rank one over $\RR$.
\end{prop}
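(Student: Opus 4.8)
The plan is to obtain the filtration first for the \emph{untwisted} big representation $\mathbf{T}$ along a decomposition group at the rational prime $p$, then check that both graded pieces are free of rank one over $\RR$, and finally push the sequence through Howard's self-dual twist and restrict from $D_p$ to $D_v$. For the first step I would simply invoke the ordinarity of the Hida family: by \cite[\S2]{howard} and \cite[\S12.7]{nek}, for a fixed decomposition group $D_p\subset G_\QQ$ there is an exact sequence of $\RR[[D_p]]$-modules
$$0\lra F^+(\mathbf{T})\lra \mathbf{T}\lra F^-(\mathbf{T})\lra 0$$
in which $D_p$ acts on $F^-(\mathbf{T})$ through an unramified character sending $\textup{Fr}_p$ to $U_p\in\RR^\times$, and $F^+(\mathbf{T})$ is free of rank one over $\RR$; here one uses that $\frak h^{\textup{ord}}$ is Gorenstein together with Hypothesis~\ref{hypo1}. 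Since $\mathbf{T}$ is $\RR$-free of rank two, tensoring this sequence with $\textup{Frac}(\RR)$ (which is $\RR$-flat) shows that $F^-(\mathbf{T})$ has generic rank one.

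The second step is to upgrade "generic rank one'' to "free of rank one'' for $F^-(\mathbf{T})$. Reduce the sequence above modulo the maximal ideal $\frak m$ of $\RR$. As $f$ is $p$-ordinary and $p>3$, the maximal unramified quotient of $\overline{\rho}_f|_{D_p}$ is one-dimensional over $\RR/\frak m$; since $F^-(\mathbf{T})\otimes_\RR\RR/\frak m$ is a nonzero unramified quotient of $\overline{\rho}_f|_{D_p}$ (nonzero because $F^-(\mathbf{T})$ has generic rank one, hence is nonzero, hence has nonzero reduction by Nakayama), it is all of that line, so one-dimensional. By Nakayama's lemma $F^-(\mathbf{T})$ is therefore a cyclic $\RR$-module, say $F^-(\mathbf{T})\cong\RR/\frak a$; combining this with $F^-(\mathbf{T})\otimes_\RR\textup{Frac}(\RR)\cong\textup{Frac}(\RR)$ forces $\frak a\cdot\textup{Frac}(\RR)=0$, hence $\frak a=0$ because $\RR$ is a domain. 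Thus both outer terms of the sequence of the first step are free of rank one over $\RR$.

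For the third step, recall from \cite[Definition~2.1.3]{howard} that $\TT=\mathbf{T}\otimes_\RR\RR(\Theta^{-1})$ for a character $\Theta\colon G_\QQ\to\RR^\times$, with $\RR(\Theta^{-1})$ a free rank-one $\RR$-module carrying a $G_\QQ$-action. Tensoring the exact sequence of the first step with $\RR(\Theta^{-1})$ is exact and preserves $\RR$-freeness of rank one. Finally, for a place $v\mid p$ of $L$ I fix an embedding realizing $D_v$ as a subgroup of $D_p$; restricting the twisted sequence to $D_v$ and setting $\textup{F}^{\pm}_v(\TT):=\big(F^{\pm}(\mathbf{T})\otimes_\RR\RR(\Theta^{-1})\big)|_{D_v}$ yields the short exact sequence \eqref{seq:ordatp} of $\RR[[D_v]]$-modules with both $\textup{F}^{\pm}_v(\TT)$ free of rank one over $\RR$, as claimed. (Note that after the $\Theta$-twist $\textup{F}^-_v(\TT)$ need no longer be unramified, but the statement only asserts freeness.)

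The only genuinely non-formal point is the second step: over the $2$-dimensional, possibly non-regular local domain $\RR$ one cannot pass from torsion-freeness to freeness, so one must first exhibit $F^-(\mathbf{T})$ as \emph{cyclic} — using the residual structure at $p$, i.e.\ $p$-ordinarity and Hypothesis~\ref{hypo1} — and only then reconcile cyclicity with generic rank one. I expect this reconciliation to be the main (if mild) obstacle; the ordinary filtration cited in the first step, and the twist and restriction to $D_v$ in the third step, are routine transport of structure.
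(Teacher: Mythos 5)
The paper gives no argument here at all: it simply points to \cite[Prop.~2.4.1]{howard} (and to \cite[\S 12.7.8--10]{nek} for the Hilbert case), so your proposal is in effect a reconstruction of Howard's proof rather than a parallel to anything written in this paper. Your architecture --- produce the ordinary filtration for the untwisted $\mathbf{T}$ along $D_p$, prove both graded pieces are $\RR$-free of rank one, then tensor with Howard's critical character and restrict to $D_v$ --- is the right one, and your first and third steps are fine; so is the reconciliation of cyclicity with generic rank one over the two-dimensional domain $\RR$, which you correctly identify as the non-formal point.

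The justification that does not hold up as written is in your second step. You deduce that $F^-(\mathbf{T})\otimes_{\RR}\RR/\mathfrak{m}$ is one-dimensional from the claim that the maximal unramified quotient of $\overline{\rho}_f|_{D_p}$ is one-dimensional ``as $f$ is $p$-ordinary and $p>3$''. Ordinarity alone does not give this: writing $\overline{\rho}_f|_{D_p}$ as an extension of an unramified character $\bar{\delta}$ by a character $\bar{\epsilon}$, the character $\bar{\epsilon}$ restricted to inertia is $\omega^{k-1+j}$, which is trivial when $k-1+j\equiv 0 \pmod{p-1}$; if moreover the residual extension splits, the maximal unramified quotient is two-dimensional and your inclusion argument collapses. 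The correct input is the $p$-distinguished part of Hypothesis~\ref{hypo1} combined with the fact that $\mathrm{Fr}_p$ acts on the unramified quotient $F^-(\mathbf{T})$ through the image of $U_p$: the reduction $F^-(\mathbf{T})\otimes\RR/\mathfrak{m}$ is then a quotient of $\overline{\rho}_f|_{D_p}$ on which Frobenius acts by the scalar $\bar{\delta}(\mathrm{Fr}_p)=U_p \bmod \mathfrak{m}$, and since $\bar{\epsilon}(\mathrm{Fr}_p)\neq\bar{\delta}(\mathrm{Fr}_p)$ by $p$-distinguishedness, any such quotient is at most one-dimensional. This is exactly the point the introduction flags when it notes that the $p$-distinguished assumption is ``erroneously missing'' in \cite{mazurtil}; your proof should invoke it here, not only for the freeness of $F^+(\mathbf{T})$. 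With that repair the argument is complete.
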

See \cite[Prop. 2.4.1]{howard} (and \cite[\S12.7.8-10]{nek} when dealing with a Hida family of Hilbert modular forms) for a proof of this statement.

For any ring homomorphism $\frak{s}:\RR\ra S$ (where $S$ is a local Noetherian ring), set $T_S=\TT\otimes_\frak{s}S$. By tensoring the sequence (\ref{seq:ordatp}) by S, we also define $\textup{F}^\pm_v(T_S)$ for any of the modules $T_S$ above. Furthermore, if $S$ is a ring which is finitely generated as an $\oo$-module, set $V_S=T_S\otimes_{\oo}E$ and define $\textup{F}^\pm_v(V_S)$ in a similar manner.
\begin{define}
\label{def:grlocal}
\begin{itemize}
\item[(i)] The \emph{strict Greenberg Selmer structure} $\FF_{\textup{Gr}}$ on $\TT$ by setting local conditions as
$$H^1_{\FF_{\textup{Gr}}}(L_v,\TT)=\left\{\begin{array}{ccr}
\ker\left(H^1(L_v,\TT)\lra H^1(L_v^{\textup{unr}},\TT)\right)&,& \hbox{ if } v\nmid p\\
\ker\left(H^1(L_v,\TT)\lra H^1(L_v,\textup{F}^-(\TT))\right)&,& \hbox{ if } v\mid p
\end{array}
\right.$$
\item[(ii)] Let $T$ be any subquotient of $\TT$. Then let  $\FF_{\textup{Gr}}$ on $T$ be the Selmer structure defined by propagating the Selmer structure $\FF_{\textup{Gr}}$ on $\TT$ via \cite[Example 1.1.2]{mr02}.
\item[(iii)] Let $S$ be a ring for which $T_S$ and $V_S$ is defined. We define a Selmer structure $\tilde{\FF}_{\textup{Gr}}$ on $T_S$ by setting
$$H^1_{\tilde{\FF}_{\textup{Gr}}}(L_v,T_S)=\left\{\begin{array}{ccr}
\ker\left(H^1(L_v,T_S)\ra H^1(L_v^{\textup{unr}},V_S)\right)&,& \hbox{ if } v\mid N\\
H^1_{\FF_{\textup{Gr}}}(L_v,T_S)&,& \hbox{ if } v\nmid N
\end{array}
\right.$$
\end{itemize}
\end{define}

\begin{prop}
\label{prop:comparegrwithtildegrwithbk}
Suppose the Assumption~\ref{assume:p} and Assumption~\ref{assume:specialtam} holds; see Remark~\ref{rem:tamtrivial} for the content of the latter assumption. Then
$$H^1_{\tilde{\FF}_{\textup{Gr}}}(L_v,T_S)=H^1_{\FF_{\textup{Gr}}}(L_v,T_S)$$
for all $v$.
\end{prop}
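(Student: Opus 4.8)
The plan is to reduce the statement to a local comparison at each place $v$ and to observe that the two Selmer structures $\tilde{\FF}_{\textup{Gr}}$ and $\FF_{\textup{Gr}}$ differ by definition only at the places $v \mid N$; for all $v \nmid N$ they are literally equal, so there is nothing to prove there. Thus fix $v \mid N$. By Definition~\ref{def:grlocal}(i) the condition $H^1_{\FF_{\textup{Gr}}}(L_v,T_S)$ is the unramified condition $\ker\left(H^1(L_v,T_S)\to H^1(L_v^{\textup{unr}},T_S)\right)$, while $H^1_{\tilde{\FF}_{\textup{Gr}}}(L_v,T_S) = \ker\left(H^1(L_v,T_S)\to H^1(L_v^{\textup{unr}},V_S)\right)$. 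So the entire content of the Proposition is that replacing the coefficient module $T_S$ by $V_S = T_S \otimes_\oo E$ in the target of the restriction map does not enlarge the kernel, i.e.\ that the natural map $H^1(L_v^{\textup{unr}},T_S)_{\textup{tors}} \to H^1(L_v^{\textup{unr}},V_S)$ has the property that no nonzero class of $H^1(L_v,T_S)$ which restricts to a torsion class in $H^1(L_v^{\textup{unr}},T_S)$ actually dies in $V_S$. Equivalently it suffices to show $H^1(L_v^{\textup{unr}},T_S)$ is torsion-free: then the kernel of $H^1(L_v^{\textup{unr}},T_S)\to H^1(L_v^{\textup{unr}},V_S)$ is zero, and the two restriction maps have the same kernel.

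The key input is therefore Lemma~\ref{lemma:torsionfree}, applied with $\frak{L}=L_v$ (which is an unramified $p$-extension of $K_v$, since $L$ is a finite extension of $K$ and by Hypothesis~\ref{hypo2} the relevant class-field-theoretic extensions are pro-$p$ and unramified away from the primes we control): under Assumption~\ref{assume:specialtam} the $\RR$-module $H^1(L_v^{\textup{unr}},\TT)$ is torsion-free. First I would invoke this to conclude $H^1(L_v^{\textup{unr}},\TT)$ is $\RR$-torsion-free, then deduce the corresponding statement for $T_S=\TT\otimes_\frak{s}S$: since $S$ is a discrete valuation ring and $T_S$ is a free $S$-module of rank two, the only torsion in $H^1(L_v^{\textup{unr}},T_S)$ would be $\pi_S$-torsion, and a class killed by $\pi_S$ would come, via the long exact sequence associated to $0\to T_S\xrightarrow{\pi_S}T_S\to T_S/\pi_S\to 0$ together with the base-change/vanishing statements already established in the proofs of Propositions~\ref{prop:h1vanishes} and~\ref{prop:tamag} (the vanishing of $H^1(G_v/I_v,-)$ coming from Assumption~\ref{assume:p}), from a class that the torsion-freeness of $H^1(L_v^{\textup{unr}},\TT)$ forbids. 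Once $H^1(L_v^{\textup{unr}},T_S)$ is torsion-free, the map to $H^1(L_v^{\textup{unr}},V_S)=H^1(L_v^{\textup{unr}},T_S)\otimes_\oo E$ is injective, so the two kernels defining $H^1_{\FF_{\textup{Gr}}}(L_v,T_S)$ and $H^1_{\tilde{\FF}_{\textup{Gr}}}(L_v,T_S)$ coincide, finishing the case $v\mid N$ and hence the Proposition.

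The main obstacle I expect is not conceptual but the bookkeeping needed to pass cleanly from the torsion-freeness of $H^1(L_v^{\textup{unr}},\TT)$ over $\RR$ to that of $H^1(L_v^{\textup{unr}},T_S)$ over $S$: one has to be careful that the specialization map $\RR\to S$ interacts well with the semi-local cohomology over the infinite extension $L_v^{\textup{unr}}$, i.e.\ that there is no spurious higher cohomology obstructing the base-change isomorphism $H^1(L_v^{\textup{unr}},\TT)\otimes_\RR S \cong H^1(L_v^{\textup{unr}},T_S)$. This is where one leans on the explicit description of $\TT$ as a $G_v$-module from~\eqref{seq:TT}–\eqref{eqn:grfactors} (a split extension when $\mu\neq\id$, and the controlled sequence~\eqref{eqn:Two} when $\mu=\id$) and on Assumption~\ref{assume:p}, exactly as in the proof of Lemma~\ref{lemma:torsionfree}; invoking that Lemma and its proof verbatim with $\frak{O}=S$ and $T=T_S$ is the cleanest route, and I would simply remark that the argument there already yields torsion-freeness of $H^1(L_v^{\textup{unr}},T_S)$, from which the claimed equality of local conditions is immediate.
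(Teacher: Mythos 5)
Your argument is correct and rests on the same mechanism as the paper's, though the paper compresses it into a citation. The paper's proof is one line: it invokes Proposition~\ref{prop:tamuniformity} (under Assumption~\ref{assume:specialtam} the Tamagawa factor of $T_S$ at $v\mid N$ is prime to $p$) together with Rubin's Lemma~3.5, which measures the discrepancy between the unramified condition on $T_S$ and the condition propagated from $V_S$ precisely by the $p$-part of that Tamagawa factor. What you do is unpack that citation: you reduce to $v\mid N$, observe that the two local conditions are the kernels of restriction into $H^1(L_v^{\textup{unr}},T_S)$ and into $H^1(L_v^{\textup{unr}},V_S)$ respectively, so that they coincide as soon as $H^1(L_v^{\textup{unr}},T_S)$ is torsion-free, and you obtain this torsion-freeness by rerunning the proof of Lemma~\ref{lemma:torsionfree} with $\frak{O}=S$ and $T=T_S$ --- whose key input, the surjectivity of the connecting map $\partial_S$, is exactly the unit condition $\tau_S\in S^\times$ established in the proof of Proposition~\ref{prop:tamuniformity}. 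The one step of yours I would drop is the intermediate attempt to transfer torsion-freeness from $H^1(L_v^{\textup{unr}},\TT)$ over $\RR$ to $H^1(L_v^{\textup{unr}},T_S)$ over $S$ by a base-change argument, which as written is not airtight; as you yourself conclude, the clean route is simply to run the proof of Lemma~\ref{lemma:torsionfree} directly with $S$-coefficients, and that suffices.
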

\begin{proof}
This follows from Proposition~\ref{prop:tamuniformity} and \cite[Lemma 3.5]{r00}.
\end{proof}
\subsection{Kolyvagin systems for $\TT$ and $\TT\otimes\LL^{\textup{ac}}$}
\label{subsec:KSforTT}
Let $K_\infty$ be the anticyclotomic $\ZZ_p$-extension of $K$ and $\Gamma^{\textup{ac}}=\textup{Gal}(K_\infty/K)$, $\LLac=\ZZ_p[[\Gamma^{\textup{ac}}]]$. Let $\gamma \in \Gamma^{\textup{ac}}$ be a fixed topological generator, and let $\{\pi,x\}$ be a maximal regular sequence for the two-dimensional Gorenstein ring $\RR$, where $\pi=\pi_E$ is the uniformizer of $E$ fixed in the introduction. For each $k,m,r \in \ZZ^+$, let $R_{k,m}$ (resp., $R_{k,m,r}$) be the artinian local ring $\RR/(\pi^k,x^m)$ (resp., $R_{k,m}\otimes_{\ZZ_p}\LLac/(\gamma-1)^r$) and $T_{k,m}$ (resp., $T_{k,m,r}$) be the module $\TT\otimes_{\RR}R_{k,m}$ (resp., $\TT\otimes_{\RR}R_{k,m,r}$). Note that we allow $G_K$ act on both factors defining $T_{k,m,r}$ via the map $G_K\twoheadrightarrow \Gamma^{\textup{ac}}$. Let $\PP$ be the set of all primes $\lambda\subset \oo_K$ of degree $2$ such that $\lambda\nmid Np$ and define $\PP_{k,m,r}\subset \PP$ to be the collection of primes $\lambda$ which satisfy:
\begin{itemize}
\item[(i)] Let $\ell$ be the rational prime below $\lambda$. Then $\ell+1 \equiv 0 \mod p^k$.
\item[(ii)] Let $D_\lambda \subset G_K$ be any decomposition group for the prime $\lambda$. Then
$$D_\lambda \subset \ker (G_K\ra \textup{Aut}_{\RR}(T_{k,m,r})).$$
Note that this condition is independent of the choice of the decomposition group $D_\lambda$ and implies that $\textup{Fr}_\lambda$ acts trivially on $T_{k,m,r}$.
\end{itemize}
For $\lambda \in \PP_{k,m,r}$, set
$$H^1_{f}(K_\lambda,T_{k,m,r}):=\ker (H^1(K_\lambda,T_{k,m,r})\lra H^1(K_\lambda^{\textup{unr}},T_{k,m,r})),$$
and
$$H^1_{s}(K_\lambda,T_{k,m,r}):= H^1(K_\lambda,T_{k,m,r})/ H^1_f(K_\lambda,T_{k,m,r}).$$
\begin{prop}
\label{prop:fscomparison}
Suppose $\lambda \in \PP_{k,m,r}$. Let $\textup{k}_\lambda=\oo_K/\lambda$  and $\textup{k}_\ell=\ZZ/\ell\ZZ$. Then there is a finite-singular comparison map
$$\phi^{\textup{fs}}_\lambda: H^1_{f}(K_\lambda,T_{k,m,r})\stackrel{\sim}{\lra}H^1_{s}(K_\lambda,T_{k,m,r})\otimes \textup{k}_\lambda^\times/\textup{k}_\ell^\times.$$
\end{prop}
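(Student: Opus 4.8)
The plan is to make both local cohomology groups completely explicit and then to assemble $\phi^{\textup{fs}}_\lambda$ from the resulting identifications. By the defining conditions of $\PP_{k,m,r}$ the prime $\lambda$ is a degree-two prime with $\lambda\nmid Np$ on which the \emph{entire} decomposition group $\mathcal{D}_\lambda$ acts trivially; hence $T_{k,m,r}$ is an unramified $G_{K_\lambda}$-module with trivial $\textup{Fr}_\lambda$-action, it is a free $R_{k,m,r}$-module of rank two (being $\TT\otimes_{\RR}R_{k,m,r}$ with $\TT$ free of rank two), it is annihilated by $p^k$ (it is annihilated by $\pi^k$ and $\pi\mid p$ in $\oo$), and by condition (i) $p^k\mid\ell+1$, so in particular $p^k\mid\mathbf{N}\lambda-1=\ell^2-1$. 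First I would apply inflation--restriction at $\lambda$: since $T_{k,m,r}$ is unramified there and $\textup{cd}(G_{K_\lambda}/\mathcal{I}_\lambda)=1$, one gets a short exact sequence
$$0\lra H^1(G_{K_\lambda}/\mathcal{I}_\lambda,T_{k,m,r})\lra H^1(K_\lambda,T_{k,m,r})\lra H^1(\mathcal{I}_\lambda,T_{k,m,r})^{\textup{Fr}_\lambda}\lra 0,$$
whose left-hand term is by definition $H^1_f(K_\lambda,T_{k,m,r})$ and whose right-hand term is therefore canonically identified with $H^1_s(K_\lambda,T_{k,m,r})$. Evaluation at a fixed topological generator $\textup{Fr}_\lambda$ of $G_{K_\lambda}/\mathcal{I}_\lambda\cong\widehat{\ZZ}$, together with the triviality of the Frobenius action, gives $H^1_f(K_\lambda,T_{k,m,r})=T_{k,m,r}/(\textup{Fr}_\lambda-1)T_{k,m,r}=T_{k,m,r}$.

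Next I would compute the singular quotient. Since $T_{k,m,r}$ is a finite $p$-group and $\ell\neq p$, every continuous homomorphism $\mathcal{I}_\lambda\to T_{k,m,r}$ factors through the maximal pro-$p$ quotient of the tame inertia, which is $\ZZ_p(1)$ as a module over $G_{K_\lambda}/\mathcal{I}_\lambda$; hence $H^1(\mathcal{I}_\lambda,T_{k,m,r})\cong T_{k,m,r}(-1)$. On this module $\textup{Fr}_\lambda$ acts by $\mathbf{N}\lambda^{-1}=\ell^{-2}$ (its action on $T_{k,m,r}$ being trivial), and because $p^k\mid\ell^2-1$ while $p^k$ kills $T_{k,m,r}$, this action is trivial; therefore $H^1_s(K_\lambda,T_{k,m,r})=T_{k,m,r}(-1)$. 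In particular both $H^1_f(K_\lambda,T_{k,m,r})$ and $H^1_s(K_\lambda,T_{k,m,r})$ are free $R_{k,m,r}$-modules of rank two.

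Finally I would build the comparison map. Let $G_\lambda:=\textup{Gal}(K(\ell)/K)$ be the Galois group of the maximal $p$-subextension of the ring class field of $K$ of conductor $\ell$. Since the class number of $K$ is prime to $p$ (Hypothesis~\ref{hypo2}) and $p\nmid 6N$, class field theory identifies $G_\lambda$ with the $p$-part of $\textup{k}_\lambda^\times/\textup{k}_\ell^\times$, a cyclic group of order equal to the $p$-part of $\ell+1$, hence a multiple of $p^k$. Moreover $\lambda$ is the unique prime of $K$ ramifying in $K(\ell)/K$, so the fixed field of its inertia subgroup is unramified everywhere over $K$ and therefore (again since the class number is prime to $p$) equals $K$; thus the local inertia at $\lambda$ surjects onto $G_\lambda$, and the induced surjection $\ZZ_p(1)\twoheadrightarrow G_\lambda$ becomes, after reduction modulo $p^k$, a canonical isomorphism $(\ZZ/p^k\ZZ)(1)\stackrel{\sim}{\lra}G_\lambda/p^kG_\lambda$. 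Since $T_{k,m,r}$ is killed by $p^k$, tensoring the previous computations then produces canonical isomorphisms
$$H^1_s(K_\lambda,T_{k,m,r})\otimes\textup{k}_\lambda^\times/\textup{k}_\ell^\times\;\cong\;T_{k,m,r}(-1)\otimes_{\ZZ_p}\big(G_\lambda/p^kG_\lambda\big)\;\cong\;T_{k,m,r}(-1)(1)\;=\;T_{k,m,r}\;\cong\;H^1_f(K_\lambda,T_{k,m,r}),$$
and $\phi^{\textup{fs}}_\lambda$ is defined to be the inverse of this composite; this is the usual finite--singular comparison isomorphism of Mazur--Rubin and Howard, adapted to the anticyclotomic setting. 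The step I expect to be the main obstacle is precisely this last one: checking that the identification of $G_\lambda$ with the $p$-part of $\textup{k}_\lambda^\times/\textup{k}_\ell^\times$, and the consequent cancellation of the Tate twists $(1)$ and $(-1)$, are genuinely canonical --- independent of all choices beyond the fixed $\textup{Fr}_\lambda$ --- which comes down to the ramification analysis of $K(\ell)/K$ (that its inertia subgroup at $\lambda$ is all of $G_\lambda$), where Hypothesis~\ref{hypo2} is essential.
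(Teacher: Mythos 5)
Your argument is correct and is essentially the paper's own proof written out in full: the paper simply invokes \cite[Lemma 1.2.1]{mr02} to get $H^1_f(K_\lambda,T_{k,m,r})\cong T_{k,m,r}\cong H^1_s(K_\lambda,T_{k,m,r})\otimes \textup{k}_\lambda^\times$ and then identifies the $p$-Sylow subgroups of $\textup{k}_\lambda^\times$ and $\textup{k}_\lambda^\times/\textup{k}_\ell^\times$ (which works since $p^k\mid \ell+1$ and $p$ is odd, so $p\nmid\ell-1$), whereas you reprove that lemma directly via inflation--restriction and the tame quotient of inertia. The only substantive addition on your end is routing the untwisting through $\Gal(K(\ell)/K)$ and ring class field theory rather than through $\textup{k}_\lambda^\times$ itself; both normalizations are fine for the statement as given.
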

\begin{proof}
It follows from \cite[Lemma 1.2.1]{mr02} and our assumption $\lambda \in  \PP_{k,m,r}$ that
$$H^1_{f}(K_\lambda,T_{k,m,r})\stackrel{\sim}{\lra}T_{k.m,n} \stackrel{\sim}{\longleftarrow}H^1_{s}(K_\lambda,T_{k,m,r})\otimes\textup{k}_\lambda^\times.$$
Identifying the $p$-Sylow subgroups of $\textup{k}_\lambda^\times$ and $\textup{k}_\lambda^\times/\textup{k}_\ell^\times$ the Proposition follows.
\end{proof}
\begin{define}
\label{def:Gs}
\begin{itemize}
\item[(a)] For $\lambda \in \PP$ and $\ell$ the prime below $\lambda$, define $\mathcal{G}(\ell)=\textup{k}_\lambda^\times/\textup{k}_\ell^\times$.
\item[(b)] Let $\NN_{k,m,r}$ (resp., $\NN$) be the set of square-free products of the rational primes $\ell$ that lie above the primes chosen among of $\PP_{k,m,r}$ (resp., $\PP$).
\item[(c)] For $n \in \NN$, define $\mathcal{G}(n)=\bigotimes_{\ell|n}\mathcal{G}(\ell)$.
\end{itemize}
\end{define}

\begin{define}
\label{def:transverse} For $\lambda \in \PP$ and $\ell$ the prime
below it, let $H_\ell$ be the ring class field of conductor $\ell$.
Since $\lambda$ splits completely in the Hilbert class field of $K$,
the maximal $p$-subextension $L$ of the extension
$(H_{\ell})_\lambda /K_\lambda$ is totally ramified abelian
$p$-extension of $K_\lambda$. Furthermore, its Galois group is
canonically identified with the $p$-Sylow subgroup of $G_\lambda$ by
class field theory hence it is also the maximal totally tamely
ramified abelian $p$-extension of $K_\lambda$. Define the
\emph{transverse submodule}
$$H^1_{\textup{tr}}(K_\lambda,M)=\ker (H^1(K_\lambda,M)\lra H^1(L,M))$$
for any $G_{K_\lambda}$-module $M$.
\end{define}
\begin{lemma}
\label{lemma:trsing}
The transverse submodule $H^1_{\textup{tr}}(K_\lambda,T_{k,m,r})$ projects isomorphically onto the singular quotient $H^1_{s}(K_\lambda,T_{k,m,r})$ under the natural projection
$$H^1(K_\lambda,T_{k,m,r})\lra H^1_{s}(K_\lambda,T_{k,m,r}).$$
\end{lemma}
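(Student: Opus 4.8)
The plan is to deduce this from an explicit direct-sum decomposition of the full local cohomology $H^1(K_\lambda,T_{k,m,r})$ furnished by local class field theory at $\lambda$, very much as in the local computations of \cite[\S1.2]{mr02}; in particular it parallels the isomorphisms already recorded in the proof of Proposition~\ref{prop:fscomparison}. Write $M=T_{k,m,r}$.

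First I would observe that $M$ is a \emph{finite} abelian $p$-group carrying a \emph{trivial} $G_{K_\lambda}$-action. Finiteness: $R_{k,m}=\RR/(\pi^k,x^m)$ is artinian, since $\{\pi,x\}$ is a maximal regular sequence for the two-dimensional local ring $\RR$ (whose residue field is finite), and it is killed by a power of $p$; tensoring it over $\ZZ_p$ with the finitely generated $\ZZ_p$-module $\LLac/(\gamma-1)^r$ keeps it finite, and $\TT$ is free of rank two over $\RR$. Triviality of the action is exactly part (ii) of the definition of $\PP_{k,m,r}$. Consequently $H^1(K_\lambda,M)=\Hom_{\textup{cont}}(G_{K_\lambda},M)$, and since $M$ is a finite $p$-group this factors through the maximal abelian pro-$p$ quotient $\mathcal{G}$ of $G_{K_\lambda}$.

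Next I would invoke local class field theory. Because $\lambda\nmid p$, wild inertia at $\lambda$ is pro-$\ell$ and so $\mathcal{G}$ is a quotient of the tame quotient of $G_{K_\lambda}$; abelianizing the tame quotient $\langle\textup{Fr}_\lambda,\tau\mid\textup{Fr}_\lambda\,\tau\,\textup{Fr}_\lambda^{-1}=\tau^{\mathbf{N}\lambda}\rangle$ and passing to the pro-$p$ part gives a canonical splitting $\mathcal{G}\cong\ZZ_p\times\Gal(L/K_\lambda)$, where the $\ZZ_p$ is the pro-$p$ unramified quotient generated by $\textup{Fr}_\lambda$ and $\Gal(L/K_\lambda)$ is the $p$-part of the tame inertia quotient, identified with the Galois group of the transverse extension $L$ of Definition~\ref{def:transverse}. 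Applying $\Hom(-,M)$ yields
\[
H^1(K_\lambda,M)=\Hom(\ZZ_p,M)\ \oplus\ \Hom\big(\Gal(L/K_\lambda),M\big),
\]
and it remains to match the two summands with the local conditions. On the one hand, inflation--restriction along $\Gal(K_\lambda^{\textup{unr}}/K_\lambda)$ (with the trivial action on $M$) identifies the first summand $\Hom(\ZZ_p,M)\cong H^1(\widehat{\ZZ},M)$ with $H^1_f(K_\lambda,M)$. On the other hand, inflation--restriction for $1\to G_L\to G_{K_\lambda}\to\Gal(L/K_\lambda)\to 1$ identifies the second summand $\Hom(\Gal(L/K_\lambda),M)\cong H^1(\Gal(L/K_\lambda),M)$ with the kernel of $H^1(K_\lambda,M)\to H^1(L,M)$, which is by definition $H^1_{\textup{tr}}(K_\lambda,M)$.

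Having written $H^1(K_\lambda,M)=H^1_f(K_\lambda,M)\oplus H^1_{\textup{tr}}(K_\lambda,M)$ as an internal direct sum, the conclusion is immediate: $H^1_s(K_\lambda,M)$ is by definition the quotient $H^1(K_\lambda,M)/H^1_f(K_\lambda,M)$, so the composite of the inclusion $H^1_{\textup{tr}}(K_\lambda,M)\hookrightarrow H^1(K_\lambda,M)$ with the projection onto $H^1_s(K_\lambda,M)$ is an isomorphism. I do not expect a genuine obstacle here; the only ingredients are $\lambda\nmid Np$, the triviality of the $G_{K_\lambda}$-action on the finite module $M$, and the structure of the tame local Galois group, and none of the extra congruences defining $\PP_{k,m,r}$ (e.g.\ the condition $\ell+1\equiv 0\bmod p^k$) are used for this particular statement — they intervene only in Proposition~\ref{prop:fscomparison}. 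The one point I would take care to verify explicitly is that the two inflation--restriction sequences are compatible with the splitting of $\mathcal{G}$ and with the definitions of $H^1_f$ and $H^1_{\textup{tr}}$, which is routine bookkeeping given Definition~\ref{def:transverse}.
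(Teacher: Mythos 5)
Your argument is correct, but it is worth knowing that the paper does not prove this lemma at all: it simply cites \cite[Lemma 1.2.4]{mr02}, and what you have written out is essentially the standard proof of that cited result, specialized to the present setting. Concretely, since $T_{k,m,r}$ is a finite $p$-group with trivial $G_{K_\lambda}$-action (condition (ii) in the definition of $\PP_{k,m,r}$), everything reduces to the structure of the maximal abelian pro-$p$ quotient $\mathcal{G}$ of the tame local Galois group, and your internal direct sum $H^1=H^1_f\oplus H^1_{\textup{tr}}$ is exactly the content of the Mazur--Rubin lemma. The benefit of your route is that it makes transparent \emph{why} the transverse condition is a complement to the unramified one; the cost is that one must check the splitting $\mathcal{G}\cong\ZZ_p\times\Gal(L/K_\lambda)$ carefully. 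On that point I would push back on your closing remark that none of the congruences defining $\PP_{k,m,r}$ intervene: the image $I$ of inertia in $\mathcal{G}$ is cyclic of order $p^{v_p(\ell^2-1)}$, while $\Gal(L/K_\lambda)$ has order $p^{v_p(\ell+1)}$, so the map $I\to\Gal(L/K_\lambda)$ is an isomorphism (equivalently, the image of $G_L$ in $\mathcal{G}$ is a genuine complement to $I$, rather than meeting it nontrivially) precisely because $p\nmid\ell-1$. This is guaranteed by $\ell\equiv-1\pmod{p^k}$ together with $p$ odd; if $p$ divided $\ell-1$ the projection $H^1_{\textup{tr}}\to H^1_s$ would still be injective (since $L/K_\lambda$ is totally ramified) but could fail to be surjective. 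So the congruence is used, albeit only modulo $p$; with that caveat your proof is complete.
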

This is \cite[Lemma 1.2.4]{mr02}.

\begin{define}
\label{def:modifiedSelmer}For a Selmer structure $\FF$ on  a $G_K$-representation $M$ and $n \in \NN$, define the modified Selmer structure $\FF(n)$ on $M$ by setting
$$H^1_{\FF(n)}(K_v,M)=\left\{\begin{array}{ccr}
H^1_{\FF}(K_v,M)&,& \hbox{ if } v\nmid n\\
H^1_{\textup{tr}}(K_v,M)&,& \hbox{ if } v \mid n
\end{array}\right.$$
\end{define}

\begin{define}
\label{def:bigKS} The $\RR\otimes\LL^{\textup{ac}}$-module of big
Kolyvagin systems for $\TT\otimes\LL^{\textup{ac}}$ is defined as
$$\overline{\textbf{KS}}(\TT\otimes\LL^{\textup{ac}},\FF_{\textup{Gr}}):=\varprojlim \textbf{KS}(T_{k,m,r}, \FF_{\textup{Gr}},\PP_{k,m,r}),$$
where each of the modules $\textbf{KS}(T_{k,m,r},
\FF_{\textup{Gr}},\PP_{k,m,r})$ of Kolyvagin systems over the
artinian ring $R_{k,m,r}$ is defined following \cite[Definition
3.1.3]{mr02}, via the constructions given above.
\end{define}
\subsection{Big Heegner point Kolyvagin system}
\label{sec:EStoKS}

We start this section by recalling Kolyvagin's derivative
construction. Let $H_c$ denote the ring class field of $K$ of
conductor $c$, and for $c$ prime to $p$, let $K(c)$ be the maximal
$p$-extension in $H_c/K$. We assume until the end that
\begin{itemize}
\item The class number of $K$ is prime to $p$ (equivalently, $K(1)=K$).
\item $\overline{T}$ is an absolutely irreducible $G_K$-representation.
\item The twisted Hida family passes through a member for which all the Tamagawa factors at primes $\ell$ dividing the tame conductor $N$ are prime to $p$.
\end{itemize}
Then the maximal $p$-extension
$K_\alpha$ in $H_{p^{\alpha+1}}$ satisfies that
$[K_\alpha:K]=p^{\alpha}$. For $(c,p)=1$, write $K_\alpha(c)$ for
the composite field of $K(c)$ and $K_\alpha$.
%\subsubsection{Local properties of the big Euler system}
For Howard's big Euler system
$\{\frak{X}_{cp^{\alpha}}\}_{c,\alpha}$ of Heegner points defined as in \cite[\S2.2]{howard}, we set using \cite[Prop. 2.3.1]{howard}
$$\frak{z}_{c,\alpha}:=\textup{cor}_{H_{_{cp^{\alpha+1}}}/K_{\alpha}(c)}\,U_p^{-\alpha}\frak{X}_{cp^{\alpha+1}} \in H^1(K_\alpha(c),\mathbb{T})$$
for every $c$ prime to $Np$, where $U_p$ is the Hecke operator.
\begin{rem}
\label{rem:maxunr}
Let $\lambda$  a place of $K$ and suppose $L/K_\lambda$ is an unramified extension. Then $L^{\textup{ur}}=K_\lambda^{\textup{ur}}$, as the field $L^{\textup{ur}}$ is unramified over $K_\lambda$, and the composite $K_\lambda^{\textup {ur}}L$ is unramified over $L$.
\end{rem}

\begin{prop}
\label{prop:localbigEuler} Under the running hypotheses,
$$\frak{z}_{c,\alpha}\in H^1_{\FF_{\textup{Gr}}}(K_\alpha(c),\TT).$$
\end{prop}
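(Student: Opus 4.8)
The plan is to check the local condition $\frak{z}_{c,\alpha}\in H^1_{\FF_{\textup{Gr}}}(K_\alpha(c),\TT)$ place by place, splitting the argument according to whether $v\nmid Np$, $v\mid N$, or $v\mid p$. For $v\nmid Np$ the local condition is the unramified condition and there is nothing to do for $\TT$ beyond recalling that $\TT$ is unramified outside $Np$ and that $K_\alpha(c)/K$ is ramified only at primes dividing $p$ (for the $K_\alpha$ part) and at primes dividing $c$ (for the $K(c)$ part); since $(c,Np)=1$, the class $\frak z_{c,\alpha}$ is the corestriction of a class coming from $\tilde H^1_f$ and so the condition at $v\nmid Np$ is automatic, but one should be careful at places above $c$ and record that the extended Selmer group $\tilde H^1_f$ of Nekov\'a\v r already builds in the correct (unramified, up to the finite ambiguity) behavior there. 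The serious work is at $v\mid Np$.

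At a place $v\mid N$, I would argue that the restriction of $\frak z_{c,\alpha}$ lies in $H^1(L_v^{\textup{unr}},\TT)$-kernel, i.e. that it is unramified. Here the key input is Assumption~($\mathbf{\textup{H}_v}$) together with the exact sequence~(\ref{seq:TT}) and the structure result~(\ref{eqn:grfactors}): $F^+(\TT)\cong\RR(1)\otimes\mu$ and $F^-(\TT)\cong\RR$. Lemma~\ref{lemma:torsionfree} shows that $H^1(\frak L^{\textup{ur}},\TT)$ is torsion-free for any unramified $p$-extension $\frak L$ of $K_v$, and by the same argument (the surjectivity of $\partial$ under Assumption~\ref{assume:specialtam}, when $\mu=\id$; the splitting when $\mu\neq\id$) one sees that the natural map $H^1(K_v,F^+(\TT))\to H^1(K_v,\TT)$ is injective with image the unramified classes and that $H^1_{\textup{ur}}(K_v,\TT)=H^1_{\FF_{\textup{Gr}}}(K_v,\TT)$. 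Since $\frak X_{cp^{\alpha+1}}$ already lands in Nekov\'a\v r's $\tilde H^1_f$, which at such $v$ imposes exactly the $\tilde{\FF}_{\textup{Gr}}$-condition, and $\tilde{\FF}_{\textup{Gr}}=\FF_{\textup{Gr}}$ by Proposition~\ref{prop:comparegrwithtildegrwithbk}, the class (and hence its corestriction $\frak z_{c,\alpha}$) satisfies the Greenberg condition at $v\mid N$.

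At a place $v\mid p$ the local condition is $\ker(H^1(L_v,\TT)\to H^1(L_v,F^-(\TT)))$. Here I would invoke Howard's result directly: \cite[Theorem 2.4.5]{howard} shows that the big Heegner classes $\frak X_{cp^{\alpha+1}}$, and hence their corestrictions and $U_p^{-\alpha}$-twists, lie in the Greenberg Selmer group, the key point being the ordinarity at $p$ and the fact that the Heegner classes come from geometry (cycles on the tower of modular/Shimura curves), so that locally at $p$ they are crystalline/ordinary in the appropriate integral sense; more precisely the image of the class in $H^1(L_v,F^-(\TT))$ is shown to vanish using that $F^-(\TT)$ is unramified and the class is universal norm-compatible up the $K_\alpha$-tower, which forces it into the finite part. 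One must also check that corestriction $\textup{cor}_{H_{cp^{\alpha+1}}/K_\alpha(c)}$ and multiplication by $U_p^{-\alpha}$ preserve the local conditions, which is formal since both are $\RR$-linear Galois-equivariant maps compatible with the defining exact sequences. The main obstacle, and the place where real care is needed, is precisely the $v\mid p$ analysis: one must ensure that Howard's argument, which controls membership in the Greenberg Selmer group, survives after the finer local analysis of \S\ref{sec:tamcontrol}, i.e. that there is no loss of integrality when passing from the $\tilde{\FF}_{\textup{Gr}}$ of Definition~\ref{def:grlocal}(iii) to the strict $\FF_{\textup{Gr}}$ — but this is exactly what Proposition~\ref{prop:comparegrwithtildegrwithbk} guarantees under Assumptions~\ref{assume:p} and~\ref{assume:specialtam}. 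Assembling the three cases gives $\frak z_{c,\alpha}\in H^1_{\FF_{\textup{Gr}}}(K_\alpha(c),\TT)$.
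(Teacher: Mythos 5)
Your proposal follows essentially the same route as the paper's proof: a place-by-place verification that quotes Howard's Proposition 2.4.5 at the places away from $N$ and at $p$, and at $v\mid N$ uses the torsion-freeness of $H^1(\frak{L}^{\textup{ur}},\TT)$ (Lemma~\ref{lemma:torsionfree}, resting on Assumption~\ref{assume:specialtam}) to upgrade Howard's statement that the restriction of $\frak{X}_{cp^{\alpha+1}}$ is $\RR$-torsion to actual vanishing, followed by a formal corestriction step down to $K_\alpha(c)$. One small slip worth noting: Proposition~\ref{prop:comparegrwithtildegrwithbk} compares $\tilde{\FF}_{\textup{Gr}}$ with $\FF_{\textup{Gr}}$ only for specializations $T_S$ into discrete valuation rings, so it is not what closes the argument for the big module $\TT$ itself --- that role is played entirely by Lemma~\ref{lemma:torsionfree}, which you do cite.
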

\begin{proof}
We need to check that $\frak{z}_{c,\alpha}\in H^1_{\FF_{\textup{Gr}}}(K_\alpha(c)_v,\TT)$ for every place $v$ of $K_\alpha(c)$.

Suppose that $w| v| \lambda|N$, where $w$ (resp., $v$, resp., $\lambda$) is a prime of $H_{_{cp^{\alpha+1}}}$ (resp., of $K_\alpha(c)$, resp., of $K$). For ease of notation, let $\frak{L}=(H_{_{cp^{\alpha+1}}})_w$ and $\frak{K}=K_\alpha(c)_v$. The proof of \cite[Prop. 2.4.5]{howard} shows that the restriction of $\frak{X}_{cp^{\alpha}}$ to $H^1(\frak{L}^{\textup{ur}},\TT)$ is $\RR$-torsion, which is trivial by Lemma~\ref{lemma:torsionfree}. This completes the proof that  $\textup{loc}_w(\frak{X}_{cp^{\alpha}}) \in H^1_{\FF_{\textup{Gr}}}(\frak{L},\TT)$. We have a commutative diagram
$$\xymatrix{
H^1(\frak{L},\TT)\ar[r]^{\textup{res}}\ar[d]^{\textup{cores}} &H^1(I_\lambda,\TT)^{G_{\frak{L}}}\ar[d]^{\textup{N}_{\frak{K}/\frak{L}}} \\
H^1(\frak{K},\TT)\ar[r]^{\textup{res}}& H^1(I_\lambda,\TT)^{G_\frak{K}}
}$$
where we use Remark~\ref{rem:maxunr} to identify $I_\lambda$ with the Galois groups of the extensions $\overline{K}_\lambda/\frak{K}^{\textup{ur}}$ and $\overline{K}_\lambda/\frak{L}^{\textup{ur}}$. Since
the image of $\textup{loc}_w(\frak{X}_{cp^{\alpha}})$ under the left vertical map is $\textup{loc}_v(\frak{z}_{c, {\alpha}})$, and  its image under the upper horizontal map is trivial, it follows that $\textup{loc}_v(\frak{z}_{c, {\alpha}}) \in  H^1_{\FF_{\textup{Gr}}}(\frak{K},\TT)$ as desired.

For a prime $w\nmid Np$  of $H_{_{cp^{\alpha+1}}}$, Howard in \cite[Prop. 2.4.5]{howard} proves that $\textup{loc}_w(\frak{X}_{cp^{\alpha}}) \in H^1_{\FF_{\textup{Gr}}}(\frak{L},\TT)$ and the proposition follows for every $v\nmid N$ as above.

Finally, for $w\mid p$ of $H_{_{cp^{\alpha+1}}}$, Howard in loc. cit. shows that $\textup{loc}_w(\frak{X}_{cp^{\alpha}}) \in H^1_{\FF_{\textup{Gr}}}(\frak{L},\TT)$ and the proposition follows from the commutative diagram:
$$\xymatrix{
H^1(\frak{L},\TT)\ar[r]\ar[d] &H^1(\frak{L},\textup{F}_v^-(\TT))\ar[d]\\
H^1(\frak{K},\TT)\ar[r]& H^1(\frak{K},\textup{F}_v^-(\TT))
}$$
\end{proof}
Let 
$$\mathcal{A}_s=\ker\left(\oo[[\textup{Gal}(H_{p^{\infty}}/H_{p^s})]] \lra \oo \right)$$ 
be the augmentation ideal of $\oo[[\textup{Gal}(H_{p^{\infty}}/H_{p^s})]]$.
Until the end of this section fix $k,m,r \in \ZZ^+$ as well as a positive
integer $s$ for which the following condition holds: 
\begin{equation}
\label{eqn:choice_np}
\hbox{The image of } \mathcal{A}_s \hbox{ is contained in the ideal }  (\pi^k,
(\gamma-1)^{p^r}) \hbox{ of } \LL^{\textup{ac}}
\end{equation}
under the map induced from the natural inclusion $\textup{Gal}(H_{p^{\infty}}/H_{p^s}) \subset \Gamma^{\textup{ac}}.$

\begin{define}
\label{def:kolder}
Fix a prime $\lambda \in \PP_{k,m,r}$ and let $\ell$ be the rational prime below $\lambda$.
\begin{itemize}
\item[(i)]  Let $\mathcal{G}_\ell=\textup{Gal}(K(\ell)/K)$. Note then that $\mathcal{G}_\ell$ is the $p$-Sylow subgroup of the cyclic group $\mathcal{G}(\ell)=k^\times_\lambda/k_\ell^\times$ defined above. Let $\sigma_\ell$ be the generator of $\mathcal{G}_\ell$.
\item[(ii)] For a square free integer $n \in \NN_{k,m,r}$, we set $\mathcal{G}_n={\displaystyle \bigotimes_{\ell|n}\,\mathcal{G}_\ell}$ and define $G(n)={\displaystyle\prod_{\ell|n}} \mathcal{G}_\ell$. Then for $m|n$,
    $$\textup{Gal}(K(n)/K(m))\cong{\displaystyle\prod_{\ell|n}}\mathcal{G}_\ell\cong G(n/m).$$
    Since we assumed that $p$ is prime to the class number of $K$, we also have that $$G(n)\cong\textup{Gal}(K(n)/K).$$%This agrees with Howard's (Heegner point Kolyvagin system) definition as long as we assume that p is prime to the class number of K.
    \item[(iii)] $\displaystyle\mathcal{D}_\ell=\sum_{i=0}^{|\mathcal{G}_\ell|-1}i\sigma_\ell^{i}\in \ZZ_p[\mathcal{G}_\ell]$ and $D_n={\displaystyle\prod_{\ell|n}D_\ell} \in \ZZ_p[G(n)]$.
\end{itemize}
\end{define}
\begin{define}
\label{def:KSkmr1}
For $n\in \NN_{k,m,r}$ and $\alpha\geq s$, define $\frak{z}_{n,\alpha}^{\prime}=D_n\,\frak{z}_{n,\alpha}\, \in H^1(K_\alpha(n),\TT)$.
\end{define}
By the standard telescoping identity satisfied by the derivative operators $D_n$, it follows for $n \in \NN_{k,m,r}$ that the image of $\frak{z}_{n,\alpha}^{\prime}$ (which we denote by $\kappa_{[n,\alpha]}^{\prime}$) under the reduction map
$$H^1(K_\alpha(n),\TT)\lra H^1(K_\alpha(n),T_{k,m})$$
lies inside $H^1(K_\alpha(n),T_{k,m})^{G(n)}$. On the other hand, since $G(n)$ is generated by the ($p$-parts of the) inertia groups at the primes dividing $n$ and $T_{k,m}$ is unramified at these primes, it follows that
$$H^0(K(n),T_{k,m})=H^0(K,T_{k,m}).$$
Furthermore, since we assumed that the $G_K$-representation $\overline{T}$ is irreducible, we have that $H^0(K,T_{k,m})=0$. The restriction map
\be\label{ref:eqnisomuseful}H^1(K_\alpha,T_{k,m})\lra H^1(K_\alpha(n),T_{k,m})^{G(n)}\ee
is therefore an isomorphism.

\begin{define}
\label{def:KSkmr2}
\begin{itemize}
\item[(i)] For $n\in \NN_{k,m,r}$ and $\alpha\geq s$, define $\kappa_{[n,\alpha]}$ as the inverse image of $\kappa_{[n,\alpha]}^{\prime}$ under the isomorphism (\ref{ref:eqnisomuseful}).
\item[(ii)] Let $\kappa_n \in H^1(K,T_{k,m,r})$ be the image of $\kappa_{[n,\alpha]}$ under the map
$$H^1(K_\alpha,T_{k,m}) \lra H^1(K,T_{k,m,r})$$
induced by Shapiro's Lemma, and the choice of $s$ as in (\ref{eqn:choice_np}). It is not hard to see that the definition of $\kappa_n$ does not depend on the choice of $\alpha$ and $s$, thanks to the norm compatibility of the classes $\frak{z}_{c,\alpha}$ as $\alpha$ varies.
\end{itemize}
\end{define}
Recall that $\frak{z}_{n,\alpha} \in H^1_{\FF_{\textup{Gr}}}(K_\alpha(n),\TT)$ by Proposition~\ref{prop:localbigEuler}.
\subsubsection{Local properties away from $Np$}
\begin{prop}
\label{prop:localgoodplaces}
For a place $v\nmid Nnp$ of $K$ and a place $w$ of $K_\alpha$ above $v$, we have
\begin{itemize}
\item[(i)] $\textup{loc}_w(\kappa_{[n,\alpha]}) \in H^1_{\textup{ur}}(K_\alpha)_w,T_{k,m}):=\ker(H^1((K_\alpha)_w,T_{k,m})\lra H^1((K_\alpha)_w^{\textup{ur}},T_{k,m}))$,
\item[(ii)] $\textup{loc}_v(\kappa_{n}) \in H^1_{\textup{ur}}(K_v,T_{k,m,r}):=\ker(H^1(K_v,T_{k,m,r})\lra H^1(K_v^{\textup{ur}},T_{k,m,r}))$.
\end{itemize}
\end{prop}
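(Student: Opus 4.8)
The plan is to deduce both assertions from Proposition~\ref{prop:localbigEuler}, together with two elementary ramification observations. First, since $v\nmid Nnp$, the field $K_\alpha(n)/K$ is unramified at $v$: the subextension $K_\alpha/K$ is ramified only above $p$, while $K(n)/K$, being contained in the ring class field $H_n$, is ramified only at the primes dividing $n$; in particular $K_\alpha(n)/K_\alpha$ is unramified at every place above $v$. Second, by Definition~\ref{def:grlocal}(i) the Greenberg condition at a place away from $p$ is by definition the unramified (finite) condition, so Proposition~\ref{prop:localbigEuler} already gives, for every place $\widetilde w$ of $K_\alpha(n)$ over $v$,
\[
\textup{loc}_{\widetilde w}(\frak{z}_{n,\alpha})\in H^1_{\textup{ur}}\big((K_\alpha(n))_{\widetilde w},\TT\big).
\]

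I would then establish (i) as follows. The Kolyvagin derivative $D_n\in\ZZ_p[G(n)]$ acts through the coefficient ring-with-Galois-action and hence commutes with restriction to any decomposition or inertia subgroup; so $\textup{loc}_{\widetilde w}(\frak{z}_{n,\alpha}')$ remains unramified, and so does its image $\kappa_{[n,\alpha]}'$ under the reduction $\TT\twoheadrightarrow T_{k,m}$, by functoriality of the map $H^1(\cdot)\to H^1(I_{\widetilde w},\cdot)$. It then remains to descend along the restriction isomorphism~(\ref{ref:eqnisomuseful}). Since $(K_\alpha(n))_{\widetilde w}/(K_\alpha)_w$ is unramified, the inertia subgroups $I_w$ and $I_{\widetilde w}$ coincide, and the natural map $H^1((K_\alpha)_w,T_{k,m})\to H^1(I_w,T_{k,m})$ factors through restriction to $(K_\alpha(n))_{\widetilde w}$; hence a class over $(K_\alpha)_w$ is unramified if and only if its restriction to $(K_\alpha(n))_{\widetilde w}$ is. Applying this with $\kappa_{[n,\alpha]}$, whose restriction is $\kappa_{[n,\alpha]}'$ under~(\ref{ref:eqnisomuseful}), gives $\textup{loc}_w(\kappa_{[n,\alpha]})\in H^1_{\textup{ur}}((K_\alpha)_w,T_{k,m})$, which is (i).

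For (ii) I would invoke the compatibility of Shapiro's lemma with localization. By construction $\kappa_n$ is the image of $\kappa_{[n,\alpha]}$ under Shapiro's isomorphism $H^1(K_\alpha,T_{k,m})\cong H^1(K,\textup{Ind}_{G_{K_\alpha}}^{G_K}T_{k,m})$, followed by the map in cohomology induced by a $G_K$-equivariant homomorphism $\textup{Ind}_{G_{K_\alpha}}^{G_K}T_{k,m}\to T_{k,m,r}$ (this is where the constraint~(\ref{eqn:choice_np}) on $s$ is used). Shapiro's isomorphism is compatible with localization: at $v$ it identifies $H^1(K_v,\textup{Ind}_{G_{K_\alpha}}^{G_K}T_{k,m})$ with $\bigoplus_{w\mid v}H^1((K_\alpha)_w,T_{k,m})$, and, since $(K_\alpha)_w/K_v$ is unramified for $v\nmid np$, it carries the unramified submodule over to $\bigoplus_{w\mid v}H^1_{\textup{ur}}((K_\alpha)_w,T_{k,m})$. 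Thus $\textup{loc}_v(\kappa_{[n,\alpha]})$, seen through Shapiro, is the tuple $(\textup{loc}_w(\kappa_{[n,\alpha]}))_{w\mid v}$, which is unramified by (i); and the coefficient map $\textup{Ind}_{G_{K_\alpha}}^{G_K}T_{k,m}\to T_{k,m,r}$ sends unramified classes to unramified classes by functoriality. Therefore $\textup{loc}_v(\kappa_n)\in H^1_{\textup{ur}}(K_v,T_{k,m,r})$, which is (ii).

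The main obstacle, such as it is, is purely bookkeeping: verifying that the identification~(\ref{ref:eqnisomuseful}), the reduction maps, the derivative operator $D_n$, and Shapiro's lemma are all genuinely compatible with localization and preserve the unramified submodules. No new arithmetic input is needed beyond Proposition~\ref{prop:localbigEuler}; in contrast with the places dividing $N$, at the primes $v\nmid Nnp$ there is no Tamagawa-factor issue, which is precisely why this step is routine.
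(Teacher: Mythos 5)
Your proof is correct and follows essentially the same route as the paper: part (i) is obtained by checking the unramified condition after restriction to $K_\alpha(n)_{w'}$ (using that $K_\alpha(n)_{w'}$, $(K_\alpha)_w$ and $K_v$ all have the same maximal unramified extension, so the map to $H^1(K_v^{\textup{ur}},T_{k,m})$ factors through the intermediate field), combined with Proposition~\ref{prop:localbigEuler} and functoriality of the reduction $\TT\twoheadrightarrow T_{k,m}$; part (ii) is the same semi-local Shapiro's lemma compatibility with the unramified submodules that the paper encodes in its second commutative diagram. No gaps.
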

\begin{proof}
Let $w^\prime$ be a place of $K_\alpha(n)$ above $w$. As remarked above, we have $K_\alpha(n)_{w^\prime}^{\textup{ur}}=(K_\alpha)_w^{\textup{ur}}=K_v^{\textup{ur}}.$ Using the fact that
$\textup{loc}_{w^\prime}(\frak{z}_{n,\alpha})$ (and therefore $\textup{loc}_{w^\prime}(\frak{z}_{n,\alpha}^{\prime})$ as well) lies in
$$H^1_{\textup{ur}}(K_\alpha(n)_{w^\prime},\TT)=\ker(H^1(K_\alpha(n)_{w^\prime},\TT)\lra H^1(K_v^{\textup{ur}},\TT)),$$ the diagram below with commutative squares proves (i):
$$\xymatrix{H^1((K_\alpha)_w,T_{k,m})\ar[r]^(.55){\textup{res}} \ar[d]^{\textup{res}}& H^1(K_v^{\textup{ur}},T_{k,m})\ar[d]^{\textup{id}}\\
H^1(K_\alpha(n)_{w^\prime},T_{k,m})\ar[r]^(.55){\textup{res}} & H^1(K_v^{\textup{ur}},T_{k,m})\\
H^1(K_\alpha(n)_{w^\prime},\TT)\ar[r]^(.55){\textup{res}}\ar[u] & H^1(K_v^{\textup{ur}},\TT)\ar[u]
}$$

Semi-local Shapiro's Lemma yields the upper square of the following commutative diagram:
$$\xymatrix{
{\displaystyle\oplus_{w|v}} H^1(K_v^{\textup{ur}}/(K_\alpha)_w,T_{k,m})\ar[r] \ar[d]^{\cong}& {\displaystyle\oplus_{w|v}} H^1((K_\alpha)_w,T_{k,m}) \ar[d]^{\cong}\\
H^1(K_v^{\textup{ur}}/K_v,\textup{Ind}_{K_\alpha/K}\,T_{k,m})\ar[r] \ar[d]& H^1(K_v,\textup{Ind}_{K_\alpha/K}\,T_{k,m})\ar[d]\\
H^1(K_v^{\textup{ur}}/K_v,T_{k,m,r})\ar[r] & H^1(K_v,T_{k,m,r})
}$$
(i) shows that $\{\textup{loc}_w(\kappa_{[n,\alpha]})\}_{w|v}$ is in the image of the uppermost horizontal arrow, which implies that $\textup{loc}_v(\kappa_{n})$ is in the image of the lowermost horizontal arrow. This completes the proof.
\end{proof}
\begin{prop}
\label{prop:localdividingn}
For $\lambda|n$, we have $\kappa_n \in H^1_{\textup{tr}}(K_\lambda,T_{k,m,r})$.
\end{prop}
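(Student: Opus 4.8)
The plan is to run Kolyvagin's descent argument for the big Heegner points, following the template of \cite[\S4]{mr02}: the assertion is the precise analogue of the fact that a Kolyvagin derivative class is \emph{transverse} at each prime dividing its conductor. First I would reduce to a local statement at level $K_\alpha$. As in the proof of Proposition~\ref{prop:localgoodplaces}, Shapiro's Lemma together with the restriction isomorphism~(\ref{ref:eqnisomuseful}) and the behaviour of the transverse local condition under propagation reduce the claim to showing, for every place $w$ of $K_\alpha$ above $\lambda$ and a fixed place $w'$ of $K_\alpha(n)$ above $w$, that the restriction of $\textup{loc}_w(\kappa_{[n,\alpha]})$ to $H^1(K_\alpha(\ell)_{w_1},T_{k,m})$ vanishes, where $\ell$ is the rational prime below $\lambda$ and $K_\alpha(\ell)_{w_1}$ is the completion of $K_\alpha(\ell)$ below $w'$; by Definition~\ref{def:transverse} and Lemma~\ref{lemma:trsing}, the field $K(\ell)_\lambda$ is exactly the transverse extension of $K_\lambda$ (its degree equals the full $p$-part of the tame inertia group of $K_\lambda$, because $p\nmid \ell-1$), so this is indeed what the propagated condition at $w$ amounts to.

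The key structural observation is that $K_\alpha(n)_{w'}=K_\alpha(\ell)_{w_1}$. Indeed, since $\lambda\in\PP$ is a degree-two prime, $\ell$ is inert in $K/\QQ$ and $\lambda=\ell\,\oo_K$ is a principal ideal generated by a rational integer coprime to $n/\ell$; hence its Artin symbol in the ring class field $K(n/\ell)$ --- and likewise for the place $w$ in $K_\alpha(n/\ell)/K_\alpha$ --- is trivial, i.e.\ $\lambda$ and $w$ split completely in $K_\alpha(n/\ell)/K_\alpha$. Writing $K_\alpha(n)=K_\alpha(\ell)\cdot K_\alpha(n/\ell)$ with $K_\alpha(\ell)\cap K_\alpha(n/\ell)=K_\alpha$, the claim follows. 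Thus it suffices to prove that $\kappa'_{[n,\alpha]}$, the image of $D_n\,\frak{z}_{n,\alpha}$ in $H^1(K_\alpha(n),T_{k,m})$, has trivial localization at $w'$.

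For this last point I would use Kolyvagin's telescoping computation. Write $D_n=D_\ell\,D_{n/\ell}$. By Proposition~\ref{prop:localbigEuler} the class $\frak{z}_{n,\alpha}$ satisfies the Greenberg condition, so --- as $\lambda\nmid Np$ --- its localization at every place above $\lambda$ is unramified; since the Galois operators comprising $D_{n/\ell}$ normalise the inertia subgroups they carry unramified local classes to unramified local classes, so the localization of $D_{n/\ell}\,\frak{z}_{n,\alpha}$ at $w'$ is still unramified. Because $K(\ell)/K$ is totally tamely ramified at $\lambda$, the group $\mathcal{G}_\ell=\Gal(K_\alpha(n)_{w'}/K_\alpha(n/\ell)_{w''})$ (with $w''$ the place of $K_\alpha(n/\ell)$ below $w'$) fixes $w'$ and acts on $H^1(K_\alpha(n)_{w'},T_{k,m})$ through its (tame inertia) quotient, hence trivially on unramified classes. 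Therefore $D_\ell$ acts on $D_{n/\ell}\,\frak{z}_{n,\alpha}|_{w'}$ as multiplication by $\sum_{i=0}^{|\mathcal{G}_\ell|-1}i=\tfrac{1}{2}|\mathcal{G}_\ell|(|\mathcal{G}_\ell|-1)$; since $p$ is odd this integer is divisible by $|\mathcal{G}_\ell|$, which in turn is divisible by $p^k$ by the first condition defining $\PP_{k,m,r}$, and $p^k$ kills $T_{k,m}$. Hence $D_\ell\big(D_{n/\ell}\,\frak{z}_{n,\alpha}|_{w'}\big)\equiv 0$, i.e.\ $\textup{loc}_{w'}(\kappa'_{[n,\alpha]})=0$, and combining the three steps gives $\kappa_n\in H^1_{\textup{tr}}(K_\lambda,T_{k,m,r})$.

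The part I expect to be most delicate is not the telescoping identity --- which is Kolyvagin's, essentially verbatim from \cite{mr02} --- but the first two paragraphs: keeping straight, through the passage via Shapiro's Lemma, that the transverse condition at $w$ propagated from $K$ really is vanishing of the restriction to $K_\alpha(\ell)_{w_1}$, and checking carefully the splitting fact $K_\alpha(n)_{w'}=K_\alpha(\ell)_{w_1}$ (which is what turns ``unramified after applying $D_\ell$'' into ``transverse''). Once those local-field identifications are pinned down, the rest is formal.
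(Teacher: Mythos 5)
Your argument is correct and is precisely the standard one: the paper's proof consists of the single line ``This is standard, c.f., Lemma 1.7.3 and 2.3.4 of \cite{howard-heegner1}'', and what you have written is a faithful unpacking of that argument (complete splitting of $\lambda$ in the prime-to-$\ell$ part of the ring class field, so that $K_\alpha(n)_{w'}$ \emph{is} the transverse extension, followed by Kolyvagin's telescoping/averaging computation killing the class modulo $p^k$). The only point worth pinning down more explicitly in a full write-up is that $\textup{loc}_{w'}\circ D_{n/\ell}$ involves the $G(n/\ell)$-action permuting the places above $w$, so ``unramified'' must be checked place by place before summing --- but your parenthetical remark about the operators carrying unramified classes to unramified classes covers this.
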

\begin{proof}
This is standard, c.f., Lemma 1.7.3 and 2.3.4 of \cite{howard-heegner1}.
\end{proof}
\subsubsection{Local properties at $p$}
Let $v$ be a place of $K$ above $p$.
\begin{prop}
\label{prop:localpstep1}
For any place  $w$ of $K_\alpha$ above $v$, we have
%\begin{itemize}
%\item[(i)]
$$\textup{loc}_w(\kappa_{[n,\alpha]}) \in \ker\left(H^1((K_\alpha)_w,T_{k,m})\lra H^1((K_\alpha)_w,\textup{F}_v^-(T_{k,m}))\right.$$
%\item[(ii)] $\textup{loc}_v(\kappa_{n}) \in \ker\left(H^1(K_v,T_{k,m,r})\lra H^1(K_v,\textup{F}_v^-(T_{k,m,r}))\right)$.
%\end{itemize}
\end{prop}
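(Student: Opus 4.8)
The plan is to deduce the desired local triviality at $v \mid p$ from the analogous statement for the big class $\frak{X}_{cp^{\alpha}}$, which is part of Howard's work. More precisely, Howard shows in \cite[Prop. 2.4.5]{howard} that for a prime $w \mid p$ of $H_{cp^{\alpha+1}}$ one has $\textup{loc}_w(\frak{X}_{cp^{\alpha}}) \in H^1_{\FF_{\textup{Gr}}}(\frak{L},\TT)$, i.e. its image in $H^1(\frak{L},\textup{F}_v^-(\TT))$ vanishes; the same then holds for $\frak{z}_{c,\alpha}$ and hence for $\frak{z}_{n,\alpha}$ (by Proposition~\ref{prop:localbigEuler}, applied with the relevant ring class field in place of $K$), and after applying the derivative operator $D_n$ also for $\frak{z}_{n,\alpha}^{\prime}$. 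So the first step is to record that $\textup{loc}_{w^\prime}(\frak{z}_{n,\alpha}^{\prime}) \in H^1_{\FF_{\textup{Gr}}}(K_\alpha(n)_{w^\prime},\TT)$ for every prime $w^\prime$ of $K_\alpha(n)$ above $v$.

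Next I would propagate this down the tower and through the reduction modulo $(\pi^k,x^m)$. The key point is that the strict Greenberg local condition at $v\mid p$ is defined by a single map $H^1(-,\TT)\to H^1(-,\textup{F}_v^-(\TT))$ arising from the short exact sequence of Proposition~\ref{prop:ordatp}, and this map is functorial both in the field extension (corestriction/restriction) and in the coefficients (reduction $\TT \twoheadrightarrow T_{k,m}$), since $\textup{F}_v^-$ is an exact functor here. Concretely, one writes the commutative square
$$\xymatrix{
H^1(K_\alpha(n)_{w^\prime},\TT)\ar[r]\ar[d] & H^1(K_\alpha(n)_{w^\prime},\textup{F}_v^-(\TT))\ar[d]\\
H^1((K_\alpha)_w,T_{k,m})\ar[r] & H^1((K_\alpha)_w,\textup{F}_v^-(T_{k,m}))
}$$
in which the left vertical arrow is the composite of corestriction $K_\alpha(n)\to K_\alpha$ with the reduction map and sends $\textup{loc}_{w^\prime}(\frak{z}_{n,\alpha}^{\prime})$ to $\textup{loc}_w(\kappa_{[n,\alpha]}^{\prime})$; chasing the diagram, the image of $\textup{loc}_w(\kappa_{[n,\alpha]}^{\prime})$ in $H^1((K_\alpha)_w,\textup{F}_v^-(T_{k,m}))$ vanishes. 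Finally, transporting along the restriction isomorphism (\ref{ref:eqnisomuseful}), which is $G(n)$-equivariant and compatible with the $\textup{F}_v^-$-maps because $G(n)$ is generated by inertia at primes prime to $p$ (so these restriction maps commute with the local-at-$p$ structure), gives the same vanishing for $\textup{loc}_w(\kappa_{[n,\alpha]})$. This is exactly the asserted membership in $\ker\left(H^1((K_\alpha)_w,T_{k,m})\to H^1((K_\alpha)_w,\textup{F}_v^-(T_{k,m}))\right)$.

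The only genuine subtlety — and the step I would be most careful about — is verifying that forming $\textup{F}_v^-$ commutes with corestriction along $K_\alpha(n)_{w^\prime}/(K_\alpha)_w$ and with the reduction $\TT \twoheadrightarrow T_{k,m}$ in the way the diagram demands; this amounts to the naturality of the connecting maps attached to the sequence $0\to \textup{F}_v^+(\TT)\to\TT\to\textup{F}_v^-(\TT)\to 0$ under these two operations, together with the fact (Proposition~\ref{prop:ordatp}) that tensoring this sequence with $T_{k,m}$ stays short exact because $\textup{F}_v^{\pm}(\TT)$ are free $\RR$-modules. One should also note that this step does not use \textbf{H.stz} or the Tamagawa hypotheses at all: unlike the local analysis at primes dividing $N$, where Lemma~\ref{lemma:torsionfree} was needed, here the statement is purely a propagation of Howard's result and the verification is formal. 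Once these compatibilities are in place the proof is a routine diagram chase.
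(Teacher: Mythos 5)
There is a genuine gap in the final step. Your argument correctly gets as far as showing that the image of $\textup{loc}_w(\kappa_{[n,\alpha]})$ in $H^1(K_\alpha(n)_{w'},\textup{F}_v^-(T_{k,m}))$ vanishes (this follows from $\frak{z}_{n,\alpha}\in H^1_{\FF_{\textup{Gr}}}(K_\alpha(n),\TT)$, i.e.\ Proposition~\ref{prop:localbigEuler}, plus functoriality of the reduction $\TT\twoheadrightarrow T_{k,m}$). But the proposition asserts vanishing in $H^1((K_\alpha)_w,\textup{F}_v^-(T_{k,m}))$, and the local restriction map
$$\rho_\alpha:\ H^1((K_\alpha)_w,\textup{F}_v^-(T_{k,m}))\lra H^1(K_\alpha(n)_{w'},\textup{F}_v^-(T_{k,m}))$$
is \emph{not} injective in general: by inflation--restriction its kernel is $H^1\bigl(K_\alpha(n)_{w'}/(K_\alpha)_w,\,H^0(K_\alpha(n)_{w'},\textup{F}_v^-(T_{k,m}))\bigr)$, and the relevant $H^0$ need not vanish --- its (residual) vanishing is exactly the content of \textbf{H.stz}, which is deliberately \emph{not} assumed in this proposition. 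The global isomorphism (\ref{ref:eqnisomuseful}) does not help here: it is injective because $H^0(K(n),T_{k,m})=0$ globally (irreducibility of $\overline{T}$), but at a place above $p$ the corresponding local $H^0$ of the quotient $\textup{F}_v^-$ can be nonzero. So "transporting along the restriction isomorphism" only tells you the obstruction class $c_{[n,\alpha]}$ lies in $\ker(\rho_\alpha)$, not that it is zero. (A secondary issue: you descend from $K_\alpha(n)$ to $K_\alpha$ via corestriction, whereas $\kappa_{[n,\alpha]}$ is defined via the \emph{inverse of restriction}; these differ by multiplication by $[K_\alpha(n):K_\alpha]$, a power of $p$, which loses information over the $p$-power-torsion module $T_{k,m}$.)

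The paper closes this gap with an argument you are missing entirely: it works at all levels $a\geq\alpha$ of the anticyclotomic tower, observes that the obstruction classes satisfy $c_{[n,\alpha]}=\textup{cor}_{(K_a)_{w}/(K_\alpha)_{w}}(c_{[n,a]})$ with $c_{[n,a]}\in\ker(\rho_a)$ by norm-compatibility of the $\frak{z}_{n,a}$, and then shows $\varprojlim_a\ker(\rho_a)=0$: the finite groups $H^0(K_a(n)_{w'},\textup{F}_v^-(T_{k,m}))$ have order bounded independently of $a$, hence stabilize, after which the corestriction maps are multiplication by powers of $p$ and kill the inverse limit. Your closing remark that the verification is "formal" and a "routine diagram chase" is precisely where the proof breaks down; without this limit argument (or without assuming \textbf{H.stz} already here, which the paper avoids) the conclusion does not follow.
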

\begin{proof}
%Let $w^\prime$ be any place of $K_\alpha(n)$ above $w$.
Let $a\geq \alpha$ be a positive integer. The fact that $\frak{z}_{n,a}\in H^1_{\FF_{\textup{Gr}}}(K_{a}(n),\TT)$ and the $G(n)$-equivariance of the map
$$H^1(K_{a}(n)_v,\TT):=\bigoplus_{\wp|v}H^1(K_{a}(n)_{\wp},\TT)\lra \bigoplus_{\wp|v}H^1(K_{a}(n)_{\wp},\textup{F}_v^-(\TT))=H^1(K_{a}(n)_v,\textup{F}_v^-(\TT))$$
shows that
$$\textup{loc}_v(\frak{z}_{n,a}^\prime) \in \ker\left(H^1(K_{a}(n)_v,\TT)\lra H^1(K_{a}(n)_v,\textup{F}_v^-(\TT)\right).$$
This, along with the commutative diagram
$$
\xymatrix{H^1(K_{a}(n)_v,\TT)\ar[r]\ar[d]&H^1(K_{a}(n)_v,,\textup{F}_v^-(\TT))\ar[d]\\
H^1(K_{a}(n)_v,T_{k,m})\ar[r]&H^1(K_{a}(n)_v,,\textup{F}_v^-(T_{k,m}))
}$$
proves that $\textup{loc}_v(\kappa_{[n,a]}^\prime) \in \ker\left(H^1(K_{a}(n)_v,T_{k,m})\ra H^1(K_{a}(n)_v,\textup{F}_v^-(T_{k,m})\right)$, and hence that
$$\textup{loc}_{w}(\kappa_{[n,a]}) \in \ker\left(H^1((K_{a})_w,T_{k,m}) \lra H^1(K_{a}(n)_{w^\prime},\textup{F}_v^-(T_{k,m})\right)$$
for every prime $w$ of $K_a$ above $v$ and $w^\prime$ of $K_a(n)$ above $w$.
Let $c_{[n,a]}$ be the image of $\textup{loc}_w(\kappa_{[n,a]})$ under
$$H^1((K_{a})_w,T_{k,m}) \lra H^1((K_{a})_{w},\textup{F}_v^-(T_{k,m})).$$
We wish to prove that $c_{[n,\alpha]}=0$.
The map
$$H^1((K_{a})_w,T_{k,m}) \lra H^1(K_{a}(n)_{w^\prime},\textup{F}_v^-(T_{k,m}))$$
factors as
$$\xymatrix{H^1((K_{a})_w,T_{k,m}) \ar[rr]\ar[rd]&&H^1(K_{a}(n)_{w^\prime},\textup{F}_v^-(T_{k,m}))\\
&H^1((K_{a})_{w},\textup{F}_v^-(T_{k,m}))\ar[ru]_{\rho_a}&
}$$
which in return shows that $c_{[n,a]} \in \ker(\rho_a)$. Using the norm compatibility of the classes $\kappa_{[n,a]}$ (and hence that of $c_{[n,a]}$) as $a$ varies, it follows that $c_{[n,\alpha]}=\textup{cor}_{_{K_{a}(n)_{w^\prime}/(K_{a})_{w}}}(c_{[n,a]})$ and thus it suffices to show that $ \displaystyle \varprojlim_{a}\, \ker(\rho_a)=0$ where the inverse limit is with respect to the corestriction maps. By inflation-restriction
$$\ker(\rho_a) \cong H^1\left(K_{a}(n)_{w^\prime}/(K_{a})_{w}, H^0(K_{a}(n)_{w^\prime}, \textup{F}_v^-(T_{k,m}))\right),$$
and we are therefore reduced to checking the vanishing
$$\varprojlim_a H^0(K_{a}(n)_{w^\prime}, \textup{F}_v^-(T_{k,m}))=0.$$
But this is clear, as the size of the modules $H^0(K_{a}(n)_{w^\prime}, \textup{F}_v^-(T_{k,m}))$ are bounded independently of $a$, hence these modules stabilize for large enough $a$ and the corestriction maps then are multiplication by powers of $p$.
\end{proof}
\begin{rem}
\label{rem:equivgreenberg}
Proposition~\ref{prop:localpstep1} is equivalent to saying that
$$\textup{loc}_w(\kappa_{[n,\alpha]}) \in \textup{im}\left(H^1((K_\alpha)_w,,\textup{F}_v^+(T_{k,m}))\lra H^1((K_\alpha)_w,T_{k,m})\right).$$
\end{rem}
\begin{cor}
\label{cor:localpstep1}
$\textup{loc}_v(\kappa_{n}) \in \textup{im}\left(H^1(K_v,\textup{F}_v^+(T_{k,m,r}))\lra H^1(K_v,T_{k,m,r})\right).$
\end{cor}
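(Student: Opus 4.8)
The plan is to deduce this from Proposition~\ref{prop:localpstep1} by exactly the semi-local Shapiro's Lemma argument used in Proposition~\ref{prop:localgoodplaces}, now with the Greenberg submodules $\textup{F}_v^+$ playing the role taken there by the unramified parts. By Remark~\ref{rem:equivgreenberg}, for each place $w$ of $K_\alpha$ above $v$ the class $\textup{loc}_w(\kappa_{[n,\alpha]})$ lies in the image of $H^1((K_\alpha)_w,\textup{F}_v^+(T_{k,m}))$ inside $H^1((K_\alpha)_w,T_{k,m})$; hence the collection $\{\textup{loc}_w(\kappa_{[n,\alpha]})\}_{w|v}$ lies in the image of $\bigoplus_{w|v}H^1((K_\alpha)_w,\textup{F}_v^+(T_{k,m}))$.

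Before the diagram chase I would record the two compatibilities that make it run. First, tensoring the exact sequence~(\ref{seq:ordatp}) of $\RR[[D_v]]$-modules over $\RR$ with $R_{k,m}$ and then applying the exact functor $\textup{Ind}_{(K_\alpha)_w/K_v}(-)=(-)\otimes_{\ZZ_p}\ZZ_p[G_{K_v}/G_{(K_\alpha)_w}]$ (exact because the group ring is $\ZZ_p$-free) exhibits $\textup{Ind}_{(K_\alpha)_w/K_v}\textup{F}_v^+(T_{k,m})$ as a $G_{K_v}$-stable submodule of $\textup{Ind}_{(K_\alpha)_w/K_v}T_{k,m}$; summing over $w|v$ I write $\textup{F}_v^+(\textup{Ind}_{K_\alpha/K}T_{k,m})$ for the resulting $G_{K_v}$-submodule of $\textup{Ind}_{K_\alpha/K}T_{k,m}$. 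Second, the $G_K$-equivariant specialization map $\textup{Ind}_{K_\alpha/K}T_{k,m}\to T_{k,m,r}$ that realizes the passage from $\kappa_{[n,\alpha]}$ to $\kappa_n$ (using~(\ref{eqn:choice_np})) is obtained from the identity on $\TT$ by base change along a homomorphism of coefficient rings, hence carries $\textup{F}_v^+$ of the source into $\textup{F}_v^+(T_{k,m,r})$.

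Combining semi-local Shapiro's Lemma for the modules $\textup{F}_v^+(T_{k,m})$ and $T_{k,m}$ with these compatibilities, I obtain the commutative diagram
$$\xymatrix{
{\displaystyle\bigoplus_{w|v}} H^1((K_\alpha)_w,\textup{F}_v^+(T_{k,m}))\ar[r] \ar[d]_{\cong}& {\displaystyle\bigoplus_{w|v}} H^1((K_\alpha)_w,T_{k,m}) \ar[d]^{\cong}\\
H^1(K_v,\textup{F}_v^+(\textup{Ind}_{K_\alpha/K}T_{k,m}))\ar[r] \ar[d]& H^1(K_v,\textup{Ind}_{K_\alpha/K}T_{k,m})\ar[d]\\
H^1(K_v,\textup{F}_v^+(T_{k,m,r}))\ar[r] & H^1(K_v,T_{k,m,r})
}$$
whose right-hand vertical composite sends $\{\textup{loc}_w(\kappa_{[n,\alpha]})\}_{w|v}$ to $\textup{loc}_v(\kappa_n)$, by Definition~\ref{def:KSkmr2}(ii) together with the compatibility of localization with Shapiro's Lemma. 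Since $\{\textup{loc}_w(\kappa_{[n,\alpha]})\}_{w|v}$ lies in the image of the top horizontal arrow, chasing it to the bottom-right corner yields $\textup{loc}_v(\kappa_n)\in\textup{im}\big(H^1(K_v,\textup{F}_v^+(T_{k,m,r}))\to H^1(K_v,T_{k,m,r})\big)$, which is the assertion.

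I expect the only delicate point to be the middle step, namely tracking the $\textup{F}_v^+$-submodules through induction from $(K_\alpha)_w$ to $K_v$ and through specialization to $T_{k,m,r}$; but this is formal, since $\textup{F}_v^{\pm}$ of every module that appears is, by construction, the relevant base change of the fixed filtration~(\ref{seq:ordatp}) on $\TT$, so no input beyond Proposition~\ref{prop:localpstep1} and the exactness of the functors involved is needed.
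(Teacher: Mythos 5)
Your proof is correct and follows essentially the same route as the paper: the paper likewise deduces the corollary from Remark~\ref{rem:equivgreenberg} together with a Shapiro's Lemma diagram relating $H^1((K_\alpha)_w,\textup{F}_v^+(T_{k,m}))\to H^1((K_\alpha)_w,T_{k,m})$ to $H^1(K_v,\textup{F}_v^+(T_{k,m,r}))\to H^1(K_v,T_{k,m,r})$. You merely spell out the intermediate induced-module row and the compatibility of $\textup{F}_v^+$ with induction and specialization, which the paper leaves implicit.
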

\begin{proof}
This follows at once from Remark~\ref{rem:equivgreenberg} and the following commutative diagram which we obtain using Shapiro's Lemma:
$$\xymatrix{H^1((K_\alpha)_w,\textup{F}_v^+(T_{k,m}))\ar[d]\ar[r]& H^1((K_\alpha)_w,T_{k,m})\ar[d]\\
H^1(K_v,\textup{F}_v^+(T_{k,m,r}))\ar[r]& H^1(K_v,T_{k,m,r})
}$$
\end{proof}

Note that Corollary \ref{cor:localpstep1} alone is not enough to conclude that
$$\textup{loc}_v(\kappa_{n}) \in H^1_{\FF_{\textup{Gr}}}(K_v,T_{k,m,r}):=\textup{im}\left(H^1(K_v,\textup{F}_v^+(\TT\otimes\LL^{\textup{ac}}))\lra H^1(K_v,T_{k,m,r}) \right).$$
Consider the following hypothesis, which may be thought of as a condition to avoid \emph{trivial zeros} at characters of $\Gamma^{\textup{ac}}$ of finite order:

(\textbf{H.stz}) $H^0(K_v,\textup{F}_v^-(\overline{T}))=0$.

We assume until the end that \textbf{H.stz} holds. It follows by local duality and the fact that the cohomological dimension of $G_v$ is 2 (and using Nakayama's Lemma) that
$$H^2(K_v,\textup{F}_v^+(\TT\otimes\LL^{\textup{ac}}))=0,$$ and hence we have a surjection
$$H^1(K_v,\textup{F}_v^+(\TT\otimes\LL^{\textup{ac}})\twoheadrightarrow H^1(K_v,\textup{F}_v^+(T_{k,m,r})).$$
This shows that
$$\textup{im}\left(H^1(K_v,\textup{F}_v^+(\TT\otimes\LL^{\textup{ac}}))\ra H^1(K_v,T_{k,m,r})\right)=\textup{im}\left(H^1(K_v,\textup{F}_v^+(T_{k,m,r}))\ra H^1(K_v,T_{k,m,r})\right)$$
and thus
\begin{cor}
\label{cor:greenbergatphstz}
If one assumes \textbf{\upshape H.stz} then $\textup{loc}_v(\kappa_{n}) \in H^1_{\FF_{\textup{Gr}}}(K_v,T_{k,m,r})$.
\end{cor}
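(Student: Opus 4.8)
The plan is to upgrade the conclusion of Corollary~\ref{cor:localpstep1}. By the description of the Greenberg local condition recalled just before the statement, $H^1_{\FF_{\textup{Gr}}}(K_v,T_{k,m,r})$ is by definition the image of $H^1(K_v,\textup{F}_v^+(\TT\otimes\LL^{\textup{ac}}))$ in $H^1(K_v,T_{k,m,r})$, whereas Corollary~\ref{cor:localpstep1} only places $\textup{loc}_v(\kappa_n)$ in the image of $H^1(K_v,\textup{F}_v^+(T_{k,m,r}))$. These two images coincide as soon as the natural map
$$H^1(K_v,\textup{F}_v^+(\TT\otimes\LL^{\textup{ac}}))\lra H^1(K_v,\textup{F}_v^+(T_{k,m,r}))$$
is surjective, so the corollary reduces entirely to this surjectivity.

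First I would reduce the surjectivity to the single vanishing statement $H^2(K_v,\textup{F}_v^+(\TT\otimes\LL^{\textup{ac}}))=0$. Writing $A=\RR\otimes_{\ZZ_p}\LL^{\textup{ac}}$, Proposition~\ref{prop:ordatp} shows $\textup{F}_v^+(\TT\otimes\LL^{\textup{ac}})$ is free of rank one over $A$, and $\textup{F}_v^+(T_{k,m,r})$ is its quotient by the ideal $(\pi^k,x^m,(\gamma-1)^r)$. Peeling off these three generators one at a time produces short exact sequences of $A[[G_v]]$-modules $0\to fM\to M\to M/fM\to 0$, with $M$ a successive quotient of $\textup{F}_v^+(\TT\otimes\LL^{\textup{ac}})$; a short diagram chase with the long exact sequences of this sequence and of $0\to M[f]\to M\to fM\to 0$, using that $\textup{cd}(G_v)=2$, shows that $H^2(K_v,M)=0$ forces both $H^2(K_v,M/fM)=0$ and the surjectivity of $H^1(K_v,M)\twoheadrightarrow H^1(K_v,M/fM)$. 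Iterating through the three generators yields the desired surjectivity, conditional on $H^2(K_v,\textup{F}_v^+(\TT\otimes\LL^{\textup{ac}}))=0$.

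The crux, and the step I expect to demand the most care, is precisely this last vanishing, as it is the only point where \textbf{H.stz} — an assumption ruling out exceptional zeros at finite-order characters of $\Gamma^{\textup{ac}}$ — is used. Since $\textup{cd}(G_v)=2$ makes $H^2(K_v,-)$ right exact and $H^2(K_v,-)$ commutes with the $\mathfrak{m}_A$-adic inverse limit defining $\textup{F}_v^+(\TT\otimes\LL^{\textup{ac}})$, topological Nakayama reduces the claim to the residual vanishing $H^2(K_v,\textup{F}_v^+(\overline{T}))=0$; here one uses that each graded piece $\mathfrak{m}_A^n/\mathfrak{m}_A^{n+1}$ is a finite-dimensional $A/\mathfrak{m}_A$-vector space, so that $\mathfrak{m}_A^n M/\mathfrak{m}_A^{n+1}M$ is a quotient of finitely many copies of $\textup{F}_v^+(\overline{T})$. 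Finally, by local Tate duality over the residue field, combined with the identification $\textup{F}_v^+(\overline{T})^{*}(1)\cong\textup{F}_v^-(\overline{T})$ that comes from the self-dual pairing $\TT\times\TT\to\RR(1)$ (under which, as in Howard's construction of the Greenberg structure, $\textup{F}_v^+(\TT)$ is its own orthogonal complement), $H^2(K_v,\textup{F}_v^+(\overline{T}))$ is dual to $H^0(K_v,\textup{F}_v^-(\overline{T}))$, which is zero by \textbf{H.stz}. This closes the chain of reductions and proves the corollary.
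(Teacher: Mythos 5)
Your argument is exactly the one the paper uses: reduce the corollary to the surjectivity of $H^1(K_v,\textup{F}_v^+(\TT\otimes\LL^{\textup{ac}}))\rightarrow H^1(K_v,\textup{F}_v^+(T_{k,m,r}))$, deduce that from the vanishing of $H^2(K_v,\textup{F}_v^+(\TT\otimes\LL^{\textup{ac}}))$, and obtain the vanishing from \textbf{H.stz} via Nakayama, $\textup{cd}(G_v)=2$, and local Tate duality against $H^0(K_v,\textup{F}_v^-(\overline{T}))$. You simply spell out the d\'evissage and the duality identification that the paper leaves implicit; the proof is correct.
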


\begin{rem}
\label{rem:hstzharsh?}
The reader familiar with \cite{fouquetRIMS} will notice that the arguments that go into the proofs of Proposition~\ref{prop:localpstep1} and Corollary~\ref{cor:greenbergatphstz} are similar to those of \cite[\S5]{fouquetRIMS}, and that the hypothesis \textbf{\upshape H.stz} is not assumed in loc.cit. The point of this remark is to explain why \textbf{\upshape H.stz} is indeed necessary also in the proof of \cite[Lemma 5.14]{fouquetRIMS} (in fact implied by the arguments therein), more precisely to indicate how the norm-compatibility of $\{\kappa_{[n,a]}\}$ as $a$ varies cannot be used alone in order to conclude with Corollary~\ref{cor:greenbergatphstz} without assuming \textbf{\upshape H.stz}.

For every positive integer $a>>0$, fix a place $w_a$ of $K_a$ above $v$, such that $w_{a^\prime}\mid w_a$ for $a\geq a^\prime$. Let $\phi_a$ be the natural map
$$\phi_a:\,H^1((K_a)_{w_a},\textup{F}_v^+(T_{k,m})) \lra H^1((K_a)_{w_a},T_{k,m}).$$

Set $T_k=\TT/x^k\TT$ and define $r_a,\delta_a$
$$H^1((K_{a})_{w_{a}},\textup{F}_v^+(T_{k}))\stackrel{r_a}{\lra}H^1((K_a)_{w_a},\textup{F}_v^+(T_{k,m}))\stackrel{\delta_a}{\lra} H^2((K_a)_{w_a},\textup{F}_v^+(T_{k}))$$
as  the natural homomorphisms in the $G_{(K_a)_{w_a}}$-cohomology of the short exact sequence
$$0\lra \textup{F}_v^+(T_k)\stackrel{\pi^m}{\lra}\textup{F}_v^+(T_k)\lra \textup{F}_v^+(T_{k,m})\lra 0.$$
By Remark~\ref{rem:equivgreenberg}, there exists $\frak{x}_a\in H^1((K_a)_{w_a},\textup{F}_v^+(T_{k,m}))$ such that
$\phi_a(\frak{x}_a)=\textup{loc}_{w_a}(\kappa_{[n,a]})$. Note that $\phi_a$ is injective only if \textbf{H.stz} holds true and hence $\frak{x}_a$ is not uniquely determined. In particular, the collection $\{\frak{x}_a\}$ is not necessarily norm-coherent as $a$ varies. This is first of the problems. In order to check that
\begin{align*}
\textup{loc}_{w_a}(\kappa_{[n,a]}) \in H^1_{\FF_{\textup{Gr}}}((K_a)_{w_a},T_{k,m})&=\textup{im}\left(H^1((K_{a})_{w_{a}},\textup{F}_v^+(\TT))\ra H^1((K_a)_{w_a},T_{k,m})\right)\\
&\subset \textup{im}\left(H^1((K_{a})_{w_{a}},\textup{F}_v^+(T_{k}))\ra H^1((K_a)_{w_a},T_{k,m})\right)
\end{align*}
one attempts to choose $\frak{x}_a$ in a way that $\delta_a(\frak{x}_a)=0$ as follows: Argue (using the fact that the module  $H^2((K_a)_{w_a},\textup{F}_v^+(T_{k}))$ is of finite order bounded independently of $a$, when $x$ is \emph{unexceptional} in an appropriate sense) that there is a $\frak{b}>>0$ such that
\be\label{eqn:h2toh2vanish}
\textup{im}\left(H^2((K_{\frak{b}})_{w_{\frak{b}}},\textup{F}_v^+(T_{k}))\stackrel{\textup{cor}}{\lra} H^2((K_{a})_{w_{a}},\textup{F}_v^+(T_{k}))\right)=0,
\ee
so that for $\frak{x}_a$ chosen as $\frak{x}_a=\textup{cor}(\frak{x}_{\frak{b}})$ for a $\frak{b}$ satisfying (\ref{eqn:h2toh2vanish}), we would conclude that $$\delta_a(\frak{x}_a):=\textup{cor}(\delta_{\frak{b}}(\frak{x}_{\frak{b}}))=0.$$
However, if choosing $\frak{b}$ as in (\ref{eqn:h2toh2vanish}) was possible, then it would follow that \be\label{eqn:goeswrong}H^2((K_{a})_{w_{a}},\textup{F}_v^+(T_{k}))=0,\ee
since we have a commutative diagram
$$\xymatrix{H^2((K_{a})_{w_{a}},\textup{F}_v^+(T_{k})\otimes\LL^{\textup{ac}})\ar[rd]\ar@{->>}[rr]&&H^2((K_{a})_{w_{a}},\textup{F}_v^+(T_{k}))\\
&H^2((K_{\frak{b}})_{w_{\frak{b}}},\textup{F}_v^+(T_{k}))\ar[ur]_{\textup{cor}}&}$$
where the surjection is because the cohomological dimension of $G_{(K_{a})_{w_{a}}}$ is 2. Now by local duality, it is easy to see that (\ref{eqn:goeswrong}) is equivalent to asking \textbf{H.stz}.
\end{rem}

\subsubsection{Local properties at primes dividing $N$}
Throughout this section Assumption \ref{assume:specialtam} is in effect; see Remark~\ref{rem:tamtrivial} for the content of this assumption. Suppose $n\in \NN_{k,m,r}$. Let $v\mid N$ be a place of $K$ and $w$ be any place of $K_\alpha$ above $v$.
\begin{prop}$\,$
\label{prop:localatN}
\begin{itemize}
\item[(i)] $\textup{loc}_w(\kappa_{[n,\alpha]}) \in \ker(H^1((K_\alpha)_w,T_{k,m})\lra H^1((K_\alpha)_w^{\textup{ur}},T_{k,m}))$,
\item[(ii)] $\textup{loc}_v(\kappa_{n}) \in \ker(H^1(K_v,T_{k,m,r})\lra H^1(K_v^{\textup{ur}},T_{k,m,r}))$.
\end{itemize}
\end{prop}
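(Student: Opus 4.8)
The plan is to follow the template of the proof of Proposition~\ref{prop:localgoodplaces}, since a prime $v\mid N$ is in particular prime to $p$ and prime to $n$ (recall $n$ is a product of primes lying below primes of $\PP$, none of which divides $Np$, while $p\nmid 6N$). The one extra input needed, compared with the primes $v\nmid Nnp$ treated there, is that the big Heegner class also satisfies the Greenberg local condition at $v\mid N$; this is exactly the content of Proposition~\ref{prop:localbigEuler}, which in turn rests on Lemma~\ref{lemma:torsionfree} (hence on Assumptions~\ref{assume:p} and~\ref{assume:specialtam}).

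First I would record the field-theoretic setup: since $K_\alpha(n)/K$ is unramified at $v$, for a place $w'$ of $K_\alpha(n)$ over a place $w$ of $K_\alpha$ over $v$ one has a tower $(K_\alpha)_w\subseteq K_\alpha(n)_{w'}\subseteq K_v^{\textup{ur}}$ with $(K_\alpha)_w^{\textup{ur}}=K_\alpha(n)_{w'}^{\textup{ur}}=K_v^{\textup{ur}}$ by Remark~\ref{rem:maxunr}. For part (i), I would then argue as follows: by Proposition~\ref{prop:localbigEuler} we have $\frak{z}_{n,\alpha}\in H^1_{\FF_{\textup{Gr}}}(K_\alpha(n),\TT)$, and at $v\mid N$ the Greenberg condition is the unramified one, so $\textup{loc}_{w'}(\frak{z}_{n,\alpha})$ --- and therefore $\textup{loc}_{w'}(\frak{z}_{n,\alpha}^{\prime})=D_n\,\textup{loc}_{w'}(\frak{z}_{n,\alpha})$ as well, the semi-local unramified subgroup at $v$ being $G(n)$-stable --- restricts to $0$ in $H^1(K_v^{\textup{ur}},\TT)$. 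Reducing modulo $(\pi^k,x^m)$ through the evident commutative square relating the restriction maps into $H^1(K_v^{\textup{ur}},\TT)$ and $H^1(K_v^{\textup{ur}},T_{k,m})$ shows that $\textup{loc}_{w'}(\kappa_{[n,\alpha]}^{\prime})$ restricts to $0$ in $H^1(K_v^{\textup{ur}},T_{k,m})$; and since $\kappa_{[n,\alpha]}$ restricts to $\kappa_{[n,\alpha]}^{\prime}$ via the isomorphism~(\ref{ref:eqnisomuseful}) while the restriction maps of the tower above compose, $\textup{loc}_w(\kappa_{[n,\alpha]})$ itself restricts to $0$ in $H^1(K_v^{\textup{ur}},T_{k,m})$. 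This is the same three-row diagram chase that proves Proposition~\ref{prop:localgoodplaces}(i).

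For part (ii) I would deduce the claim from (i) exactly as in the proof of Proposition~\ref{prop:localgoodplaces}(ii): semi-local Shapiro's Lemma identifies $\bigoplus_{w\mid v}H^1((K_\alpha)_w,T_{k,m})$ with $H^1(K_v,\textup{Ind}_{K_\alpha/K}T_{k,m})$ compatibly with the unramified subgroups, and the $G_v$-equivariant reduction $\textup{Ind}_{K_\alpha/K}T_{k,m}\to T_{k,m,r}$ coming from the choice of $s$ as in~(\ref{eqn:choice_np}) induces a map on $H^1(K_v,-)$ carrying unramified classes to unramified classes; since $\{\textup{loc}_w(\kappa_{[n,\alpha]})\}_{w\mid v}$ is unramified by (i) and maps to $\textup{loc}_v(\kappa_n)$, part (ii) follows.

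I do not expect a genuine obstacle here. The only substantive point is the Greenberg local condition at $v\mid N$, which is already discharged in Proposition~\ref{prop:localbigEuler} and is precisely where the control of Tamagawa factors enters (through Lemma~\ref{lemma:torsionfree} and Assumptions~\ref{assume:p} and~\ref{assume:specialtam}); everything else is bookkeeping of the towers of fields and of the successive reductions $\TT\rightsquigarrow T_{k,m}\rightsquigarrow T_{k,m,r}$, mirroring almost verbatim the argument for primes away from $Np$.
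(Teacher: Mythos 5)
Your proposal is correct and is essentially the paper's own proof: the paper simply declares that the proof of Proposition~\ref{prop:localgoodplaces} goes through verbatim, the only modification being that in the final Shapiro-lemma diagram the cohomology of $\textup{Gal}(K_v^{\textup{ur}}/K_v)$ must be computed with $T_{k,m}^{I_v}$ (resp.\ $T_{k,m,r}^{I_v}$) since the representation is ramified at $v\mid N$ --- a point your phrasing in terms of ``unramified classes mapping to unramified classes'' handles correctly. The substantive input you isolate (the Greenberg condition at $v\mid N$ from Proposition~\ref{prop:localbigEuler}, resting on Lemma~\ref{lemma:torsionfree} and Assumptions~\ref{assume:p} and~\ref{assume:specialtam}) is exactly what makes the verbatim transfer possible.
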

\begin{proof}
The proof of Proposition~\ref{prop:localgoodplaces} goes through verbatim, except in the final paragraph one needs to replace $T_{k,m}$ (resp., $T_{k,m,r}$) by $T_{k,m}^{I_v}$ (resp., $T_{k,m,r}^{I_v}$), when they appear in the cohomology computed for the group $\textup{Gal}(K_v^{\textup{ur}}/K_v)$.
\end{proof}
\begin{prop}
\label{prop:localatN2}
$\textup{loc}_v(\kappa_n) \in H^1_{\FF_{\textup{Gr}}}(K_v,T_{k,m,r})$. (That is to say, $\textup{loc}_v(\kappa_n)$ is in the image of $H^1_{\textup{ur}}(K_v,\TT\otimes\LL^{\textup{ac}})$ under the natural map induced from $\TT\otimes\LL^{\textup{ac}} \twoheadrightarrow T_{k,m,r}$.
\end{prop}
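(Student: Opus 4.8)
The plan is to bootstrap from Proposition~\ref{prop:localatN}(ii), which already tells us that $\textup{loc}_v(\kappa_n)$ lies in the finite-level unramified submodule $H^1_{\textup{ur}}(K_v,T_{k,m,r})=\ker\bigl(H^1(K_v,T_{k,m,r})\to H^1(K_v^{\textup{ur}},T_{k,m,r})\bigr)$, whereas what is to be proved is the a priori stronger statement that it lies in the propagated Greenberg submodule $H^1_{\FF_{\textup{Gr}}}(K_v,T_{k,m,r})=\textup{im}\bigl(H^1_{\textup{ur}}(K_v,\TT\otimes\LL^{\textup{ac}})\to H^1(K_v,T_{k,m,r})\bigr)$. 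Since the propagation map visibly lands in $H^1_{\textup{ur}}(K_v,T_{k,m,r})$, it is enough to prove that the natural map $H^1_{\textup{ur}}(K_v,\TT\otimes\LL^{\textup{ac}})\to H^1_{\textup{ur}}(K_v,T_{k,m,r})$ is \emph{surjective}. Writing $H^1_{\textup{ur}}(K_v,M)=H^1\bigl(\textup{Gal}(K_v^{\textup{ur}}/K_v),M^{I_v}\bigr)$ via inflation--restriction and using that $\textup{Gal}(K_v^{\textup{ur}}/K_v)\cong\widehat{\ZZ}$ has cohomological dimension one, so that $H^1(\widehat{\ZZ},-)$ is right exact, this surjectivity reduces to surjectivity of $(\TT\otimes\LL^{\textup{ac}})^{I_v}\twoheadrightarrow(T_{k,m,r})^{I_v}$.

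Now $K_\infty/K$ is unramified at $v\mid N$, so $I_v$ acts trivially on the $\LL^{\textup{ac}}$-factor; combined with the fact that $\TT^{I_v}$ is a \emph{free} $\RR$-module --- which is what the proof of Lemma~\ref{lemma:torsionfree} yields under Assumption~\ref{assume:specialtam}, once one notes that a surjective endomorphism of a finitely generated module over a Noetherian local ring is an isomorphism --- one gets $(\TT\otimes\LL^{\textup{ac}})^{I_v}=\TT^{I_v}\otimes_{\RR}R_\infty$, a free module over $R_\infty=\RR\otimes_{\ZZ_p}\LL^{\textup{ac}}$. Since $T_{k,m,r}=\TT\otimes_{\RR}R_{k,m,r}$ with $R_{k,m,r}=R_\infty/(\pi^k,x^m,(\gamma-1)^r)$, the desired surjectivity follows as soon as one checks the identity $(T_{k,m,r})^{I_v}=\TT^{I_v}\otimes_{\RR}R_{k,m,r}$, i.e.\ that forming $I_v$-invariants commutes with this reduction.

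For the last point I would run through the exact sequence~(\ref{seq:TT}), $0\to\textup{F}^+(\TT)\to\TT\to\textup{F}^-(\TT)\to0$ of $\RR[[G_v]]$-modules supplied by ($\mathbf{\textup{H}_v}$), together with~(\ref{eqn:grfactors}): $\textup{F}^-(\TT)\cong\RR$ with trivial $I_v$-action, and $\textup{F}^+(\TT)\cong\RR(1)\otimes\mu$, on which $I_v$ acts through $\mu$ alone (the cyclotomic character being unramified at $v\nmid p$). When $\mu\neq\textup{id}$ this sequence splits as a sequence of $G_v$-modules by Proposition~\ref{prop:h1vanishes}, and since $p$ is odd (so $2\in\RR^{\times}$) the ramified-$\mu$ piece contributes no invariants either before or after reduction, so the identity is immediate. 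When $\mu=\textup{id}$, the only obstruction to commuting $(-)^{I_v}$ past the reduction is multiplication by the image in $R_{k,m,r}$ of the connecting class $\pmb{\partial}(1)\in H^1(I_v,\RR(1))$, which under the isomorphism~(\ref{eqn:ord interpolated}) is precisely the Tamagawa element $\pmb{\tau}\in\RR$; by Proposition~\ref{prop:unit in RR} (this is exactly where Assumption~\ref{assume:specialtam} enters) $\pmb{\tau}\in\RR^{\times}$, so its reduction is a unit and the obstruction vanishes. This establishes the identity, hence the surjectivity, hence the proposition; one may also view this last step as the finite-level counterpart of the comparison $\FF_{\textup{Gr}}=\tilde{\FF}_{\textup{Gr}}$ in Proposition~\ref{prop:comparegrwithtildegrwithbk}.

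I expect the main obstacle to be the case $\mu=\textup{id}$: one must verify carefully that the \emph{sole} obstruction to $(-)^{I_v}$ commuting with the threefold reduction modulo $(\pi^k,x^m,(\gamma-1)^r)$ is multiplication by the reduction of $\pmb{\tau}$ --- so that its being a unit, i.e.\ Proposition~\ref{prop:unit in RR}, is precisely what the argument requires --- and to be careful with the inverse limits implicit in the $\LL^{\textup{ac}}$-coefficients (continuity of cochains, exactness of $\varprojlim$ on the relevant Mittag--Leffler systems, and the vanishing $H^2(I_v,-)=0$ coming from $\textup{cd}_p(I_v)=1$).
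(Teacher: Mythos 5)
Your proposal is correct and takes essentially the same route as the paper: one reduces, via the commutative diagram of unramified cohomology and the fact that $\textup{Gal}(K_v^{\textup{ur}}/K_v)$ has cohomological dimension one, to the surjectivity of $(\TT\otimes\LL^{\textup{ac}})^{I_v}\to T_{k,m,r}^{I_v}$. The paper isolates this as Lemma~\ref{lemma:surjective} and proves it exactly as you sketch, splitting into the cases $\mu\neq\textup{id}$ and $\mu=\textup{id}$ and, in the latter, using that the connecting map $\partial$ is surjective because the Tamagawa element $\pmb{\tau}$ is a unit (Proposition~\ref{prop:unit in RR}).
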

\begin{proof}
The commutative diagram with exact rows
$$\xymatrix{H^1_{\textup{ur}}(K_v,\TT\otimes\LL^{\textup{ac}})\ar[r]& H^1(K_v,\TT\otimes\LL^{\textup{ac}}) \ar[r]\ar[d]&H^1(K_v^{\textup{ur}},\TT\otimes\LL^{\textup{ac}})\ar[d]\\
H^1_{\textup{ur}}(K_v,T_{k,m,n})\ar[r]&H^1(K_v,T_{k,m,n}) \ar[r]&H^1(K_v^{\textup{ur}},T_{k,m,n})
}$$
shows that the vertical arrow on the left induces a map
\be\label{eqn:urtourmap}H^1_{\textup{ur}}(K_v,\TT\otimes\LL^{\textup{ac}})\lra H^1_{\textup{ur}}(K_v,T_{k,m,r}).\ee
To conclude the proof, it suffices to prove that this map is surjective. On the other hand, the map (\ref{eqn:urtourmap}) is
$$H^1(K_v^{\textup{ur}}/K_v, (\TT\otimes\LL^{\textup{ac}})^{I_v}) \lra H^1(K_v^{\textup{ur}}/K_v, T_{k,m,r}^{I_v}),$$
and this map is surjective thanks to Lemma~\ref{lemma:surjective} below and the facts that
\begin{itemize}
\item $I_v$ acts trivially on $\LL^{\textup{ac}}$,
\item the cohomological dimension of $\textup{Gal}(K_v^{\textup{ur}}/K_v)$ is one.
\end{itemize}
\end{proof}
\begin{lemma}
\label{lemma:surjective}
The natural map $\TT^{I_v}\ra T_{k,m}^{I_v}$ is surjective.
\end{lemma}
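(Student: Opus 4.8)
The plan is to make the action of $I_v$ on $\TT$ completely explicit using Assumption $(\mathbf{\textup{H}_v})$, to read off $\TT^{I_v}$ and $T_{k,m}^{I_v}$ directly, and then to observe that the natural map between the two is nothing but the reduction of a free $\RR$-module modulo $(\pi^k,x^m)$.

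Recall from (\ref{seq:TT})--(\ref{eqn:grfactors}) that $(\mathbf{\textup{H}_v})$ furnishes an exact sequence $0\to\RR(1)\otimes\mu\to\TT\to\RR\to 0$ of $\RR[[G_v]]$-modules, in which $F^+(\TT)\cong\RR(1)\otimes\mu$ and $F^-(\TT)\cong\RR$ are free of rank one. Since $v\nmid p$ the cyclotomic character is unramified at $v$, so $I_v$ acts on $\RR(1)\otimes\mu$ through $\mu|_{I_v}$ alone and acts trivially on $F^-(\TT)$. I would now distinguish the cases for $\mu$, following the computations in the proofs of Propositions~\ref{prop:h1vanishes} and~\ref{prop:tamag}. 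If $\mu=\id$, fix an $\RR$-basis $\{e_1,e_2\}$ of $\TT$ with $e_1$ a basis of $F^+(\TT)$ and $e_2$ lifting a basis of $F^-(\TT)$; then each $g\in I_v$ acts by $g e_1=e_1$ and $g e_2=e_2+c_g e_1$ for some $c_g\in\RR$, the cocycle $g\mapsto c_g$ represents $\pmb{\partial}(1)\in H^1(I_v,\RR(1))$, and under the isomorphism~(\ref{eqn:ord interpolated}) this corresponds to the Tamagawa element $\pmb{\tau}$, which lies in $\RR^\times$ by Proposition~\ref{prop:unit in RR} (Assumption~\ref{assume:specialtam} being in force throughout this subsection). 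In particular some $c_g$ is a unit, so, $\RR$ being a domain, $\TT^{I_v}=\{xe_1+ye_2: y c_g=0\ \text{for all }g\}=\RR e_1=F^+(\TT)$. If $\mu\neq\id$, then by Proposition~\ref{prop:h1vanishes} the sequence (\ref{seq:TT}) splits as $G_v$-modules, so $\TT\cong(\RR(1)\otimes\mu)\oplus\RR$; when $\mu$ is unramified, $I_v$ acts trivially and $\TT^{I_v}=\TT$, and when $\mu$ is ramified, $I_v$ acts on the first summand through the order-two character $\mu|_{I_v}$, so that $(\RR(1)\otimes\mu)^{I_v}=0$ because $p$ is odd (hence $2\in\RR^\times$), giving $\TT^{I_v}=F^-(\TT)$.

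Exactly the same computation applies with $\TT$ replaced by $T_{k,m}=\TT\otimes_\RR R_{k,m}$, because the images of $\pmb{\tau}$ and of $2$ in the artinian ring $R_{k,m}$ remain units; one finds $T_{k,m}^{I_v}=R_{k,m}e_1$, or all of $T_{k,m}$, or $R_{k,m}e_2$ in the three cases, which in every case is precisely $\TT^{I_v}\otimes_\RR R_{k,m}$. Since $F^{\pm}(\TT)$ are free by (\ref{eqn:grfactors}), they are $\RR$-module direct summands of $\TT$, so $\TT^{I_v}$ is free over $\RR$ and is an $\RR$-direct summand of $\TT$; hence the natural map $\TT^{I_v}\to T_{k,m}^{I_v}$ is identified with the tautological surjection $\TT^{I_v}\twoheadrightarrow\TT^{I_v}\otimes_\RR R_{k,m}$, which proves the lemma. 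The one genuinely delicate point — the ``main obstacle'' — is that formation of $I_v$-invariants commutes here with the reduction $\otimes_\RR R_{k,m}$, and this is exactly where Assumption~\ref{assume:specialtam} is needed: the off-diagonal entry $\pmb{\tau}$ whose vanishing would enlarge $\TT^{I_v}$ reduces to a \emph{unit}, and not merely to a nonzero element, in every artinian quotient of $\RR$.
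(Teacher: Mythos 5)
Your proof is correct and takes essentially the same route as the paper's: the case $\mu\neq\id$ via the splitting from Proposition~\ref{prop:h1vanishes}, and the case $\mu=\id$ via the fact that the extension class $\pmb{\partial}(1)$ corresponds under (\ref{eqn:ord interpolated}) to the unit $\pmb{\tau}$ (Proposition~\ref{prop:unit in RR}), forcing $\TT^{I_v}=F^+(\TT)$ and $T_{k,m}^{I_v}=F^+(T_{k,m})$. The only difference is presentational: the paper runs the $\mu=\id$ computation through the ladder of long exact sequences in $I_v$-cohomology, whereas you compute the invariants directly from the explicit unipotent cocycle, which amounts to the same thing.
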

\begin{proof}
Under our running assumptions, $\TT$ fits in an exact sequence of $\RR[[G_v]]$-modules
$$0\lra \RR(1)\otimes\mu \lra \TT \lra \RR\lra 0,$$
where $\mu^2=1$ and $\mu$ is unramified.

In the case $\mu\neq\textup{id}$, the proof of Proposition~\ref{prop:h1vanishes} shows that
$$\TT=(\RR(1)\otimes\mu)\oplus\RR$$
as $G_v$-modules and thus $\TT^{I_v}=\TT$ and $\TT^{I_v}=\TT \twoheadrightarrow T_{k,m}$ as desired.

In the case $\mu=\textup{id}$, the $I_v$-cohomology of the short exact sequence
$$0\lra R(1)\lra T \lra R \lra 0$$
(for $R=\RR,R_{k,m}$ and $T=\TT,T_{k,m}$) yields the following commutative diagram with exact rows:
$$
\xymatrix{0\ar[r]&\RR(1)\ar@{->>}[d]\ar[r]^{\alpha_0}&\TT^{I_v}\ar[r]^{\beta}\ar[d]&\RR\ar[d]\ar[r]^(.33){\partial}&H^1(I_v,\RR(1))\ar[d]\ar[r]^(.56){\alpha}&H^1(I_v,\TT)\ar[d]\\
0\ar[r]&R_{k,m}(1)\ar[r]_{\bar{\alpha}_0}&T_{k,m}^{I_v}\ar[r]_{\bar{\beta}}&R_{k,m}\ar[r]_(.33){\bar{\partial}}&H^1(I_v,R_{k,m}(1))\ar[r]_(.56){\bar{\alpha}}&H^1(I_v,T_{k,m})}$$
Under the running assumptions, the proof of Lemma~\ref{lemma:torsionfree} shows that the map $\partial$ is surjective, in particular non-zero and therefore injective. This shows the map $\beta$ is the zero map, hence $\alpha_0$ is surjective and therefore an isomorphism. The proof of Proposition~\ref{prop:tamuniformity} shows that $\bar{\alpha}$ is injective and it follows as above that the map $\bar{\alpha}_0$ is an isomorphism. Proof now follows by the surjectivity of the left-most vertical arrow.
\end{proof}

\begin{thm}
\label{thm:KS}
There is a Kolyvagin system $\pmb{\tilde{\kappa}} \in \overline{\textup{\textbf{KS}}}(\TT\otimes\LL^{\textup{ac}},\FF_{\textup{Gr}})$ such that 
$$\tilde{\kappa}_1=\kappa_1=\{\frak{z}_s\} \in \varprojlim_s H^1(K_s,\TT).$$
\end{thm}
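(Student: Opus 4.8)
The strategy is to verify that the collection $\{\kappa_n\}_{n\in\NN_{k,m,r}}$ constructed in Definition~\ref{def:KSkmr2} satisfies the axioms of a Kolyvagin system for $(T_{k,m,r},\FF_{\textup{Gr}},\PP_{k,m,r})$ in the sense of \cite[Definition 3.1.3]{mr02}, and then pass to the inverse limit over $(k,m,r)$ to land in $\overline{\textbf{KS}}(\TT\otimes\LL^{\textup{ac}},\FF_{\textup{Gr}})$. First I would record that each $\kappa_n$ lies in $H^1(K,T_{k,m,r})$ by construction, and that the $\FF_{\textup{Gr}}(n)$-Selmer condition is met place by place: away from $Nnp$ this is Proposition~\ref{prop:localgoodplaces}(ii); at primes dividing $n$ it is Proposition~\ref{prop:localdividingn} (the transverse condition); at primes above $p$ it is Corollary~\ref{cor:greenbergatphstz}, which is exactly where \textbf{H.stz} enters; and at primes dividing $N$ it is Proposition~\ref{prop:localatN2}, which is where Assumption~\ref{assume:specialtam} (equivalently \textbf{H.Tam}(ii)) and the Tamagawa-element analysis of \S\ref{sec:tamcontrol} enter. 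Thus $\kappa_n\in H^1_{\FF_{\textup{Gr}}(n)}(K,T_{k,m,r})$ for every $n$.

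Next I would check the Kolyvagin system relation: for $n\ell\in\NN_{k,m,r}$ with $\ell$ prime, the image of $\kappa_{n\ell}$ in $H^1_s(K_\lambda,T_{k,m,r})\otimes\mathcal{G}(n)$ equals the image of $\phi^{\textup{fs}}_\lambda(\textup{loc}_\lambda(\kappa_n))$ under the finite-singular comparison isomorphism of Proposition~\ref{prop:fscomparison}, combined with Lemma~\ref{lemma:trsing} identifying the transverse and singular quotients. This is the standard consequence of the Euler system distribution relation for Howard's classes \cite[Prop. 2.3.1]{howard} together with the telescoping identity for the derivative operators $D_n$ (Definition~\ref{def:kolder}); the argument is essentially that of \cite[\S1.7, \S2.3]{howard-heegner1} or \cite[\S4-5]{mr02}, carried out with coefficients $T_{k,m,r}$ rather than a discrete valuation ring, which causes no new difficulty since all the relevant cohomology groups are free over $R_{k,m,r}$ at the primes $\lambda\in\PP_{k,m,r}$ by \cite[Lemma 1.2.1]{mr02} and the choice of $\PP_{k,m,r}$. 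The normalization $\kappa_1$ is immediate from Definition~\ref{def:KSkmr2}: since $D_1=1$ and $\frak{z}_{1,\alpha}=\frak{z}_\alpha$, the class $\kappa_1$ is, after Shapiro's Lemma, the image of $\frak{z}_\alpha$, and the norm-compatibility of $\{\frak{z}_\alpha\}$ as $\alpha$ varies (recalled in the introduction) shows these patch together to $\{\frak{z}_s\}\in\varprojlim_s H^1(K_s,\TT)$.

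Finally I would address compatibility under varying $(k,m,r)$: the classes $\kappa_n^{(k,m,r)}$ for different levels are compatible under the natural reduction maps $T_{k',m',r'}\to T_{k,m,r}$, again by the norm-compatibility of Howard's classes and the independence of the construction from the auxiliary choice of $s$ in \eqref{eqn:choice_np} (Definition~\ref{def:KSkmr2}(ii)), so the system $\{\kappa_n\}$ defines an element of the inverse limit $\overline{\textbf{KS}}(\TT\otimes\LL^{\textup{ac}},\FF_{\textup{Gr}})$ as in Definition~\ref{def:bigKS}. I expect the main obstacle to be none of the formal Kolyvagin-system bookkeeping but rather the verification of the local conditions at $p$ and at primes dividing $N$ — that is, ensuring that the reductions $\kappa_n$ inherit the strict Greenberg condition. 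At $p$ this is subtle because Corollary~\ref{cor:localpstep1} only gives membership in the image of $H^1(K_v,\textup{F}_v^+(T_{k,m,r}))$, and one genuinely needs \textbf{H.stz} to promote this to the image of $H^1(K_v,\textup{F}_v^+(\TT\otimes\LL^{\textup{ac}}))$ (see Remark~\ref{rem:hstzharsh?} for why the naive norm-compatibility argument fails); at primes dividing $N$ the point is that the a priori weaker condition $\tilde\FF_{\textup{Gr}}$ agrees with $\FF_{\textup{Gr}}$ only after one knows the Tamagawa factors are $p$-adic units, which is Proposition~\ref{prop:comparegrwithtildegrwithbk} resting on the interpolation of Tamagawa numbers in \S\ref{sec:tamcontrol}. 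Everything else is a routine, if lengthy, adaptation of the Mazur--Rubin and Howard machinery to the present deformation-theoretic setting.
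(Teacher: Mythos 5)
Your proposal follows essentially the same route as the paper: verify membership in $H^1_{\FF_{\textup{Gr}}(n)}(K,T_{k,m,r})$ place by place (Propositions~\ref{prop:localgoodplaces}, \ref{prop:localdividingn}, \ref{prop:localatN2}, Corollary~\ref{cor:greenbergatphstz}), check the finite--singular relation at primes $\lambda\in\PP_{k,m,r}$, and pass to the inverse limit over $(k,m,r)$. You also correctly identify where \textbf{H.stz} and Assumption~\ref{assume:specialtam} enter.

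The one place you overstate is the Kolyvagin system relation itself. It is not the case that the raw derived classes of Definition~\ref{def:KSkmr2} satisfy
$\phi^{\textup{fs}}_\lambda\left(\textup{loc}_\lambda(\kappa_n)\right)=\textup{loc}_\lambda(\kappa_{n\ell})$
on the nose. What the Euler system relation and the telescoping identity give is a \emph{comparison} of the two localizations under the identifications
$H^1_f(K_\lambda,T_{k,m,r})\stackrel{\sim}{\lra} T_{k,m,r}\stackrel{\sim}{\longleftarrow} H^1_s(K_\lambda,T_{k,m,r})$,
and for Heegner-point Euler systems this comparison involves a discrepancy (coming from the action of Frobenius in the Euler factor) that must be absorbed by modifying the classes, exactly as in \cite[Theorem 1.7.5]{howard-heegner1} (the paper carries this out following the comparison of \cite[\S 7]{nek92}). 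This is precisely why the theorem asserts the existence of $\pmb{\tilde{\kappa}}$ with only $\tilde{\kappa}_1=\kappa_1$ guaranteed, rather than asserting that $\{\kappa_n\}$ itself is a Kolyvagin system. Since you cite the right sources and the modification is standard, this is a repairable omission rather than a wrong approach, but as literally written your verification of the finite--singular axiom would fail for the unmodified classes.
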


\begin{proof}
Recall that 
$$\overline{\textup{\textbf{KS}}}(\TT\otimes\LL^{\textup{ac}},\FF_{\textup{Gr}})=\varprojlim {\textup{\textbf{KS}}}(T_{k,m,r},\FF_{\textup{Gr}},\PP_{k,m,r}).$$
Denote by $\kappa_n^{(k,m,r)}$ what we have called $\kappa_n$  in Definition~\ref{def:KSkmr2}. We have verified above that $\kappa_n^{(k,m,r)} \in H^1_{\FF_\textup{Gr}(n)}(K,T_{k,m,r})$. To finish off the proof one compares, as carried out in \cite[\S7]{nek92}, the images of the classes $\textup{loc}_{\ell}\left(\kappa_n^{(k,m,r)}\right)$ and $\textup{loc}_{\ell}\left(\kappa_{n\ell}^{(k,m,r)}\right)$ under the identifications
\be
\label{eqn:fsexplicit}
H^1_f(K_\lambda,T_{k,m,r}) \stackrel{\sim}{\lra} T_{k,m,r} \stackrel{\sim}{\longleftarrow} H^1_{s}(K_\lambda,T_{k,m,r}),
\ee
  and modifies $\kappa_{n}^{(k,m,r)}$ slightly as in \cite[Theorem 1.7.5]{howard-heegner1} so as to obtain 
  $$\tilde{\kappa}_{n}^{(k,m,r)} \in H^1_{\FF_{\textup{Gr}}(n)}(K,T_{k,m,r})\otimes \mathcal{G}(n)$$
   satisfying the desired condition
$$\phi_\lambda^{\textup{fs}}\left(\left(\textup{loc}_\ell(\tilde{\kappa}_{n}^{(k,m,r)}\right)\right)=\textup{loc}_\ell\left(\tilde{\kappa}_{n\ell}^{(k,m,r)}\right).$$ 

%See Olivier's EulerSystemforTower paper, page  
\end{proof}
\section{Howard's main conjecture}
Let $R_\infty$ be the ring $\RR\otimes_{\ZZ_p}\LL^{\textup{ac}}$. In this section, we record the standard application of the big Heegner point Kolyvagin system $\pmb{\tilde{\kappa}}$ that we have constructed in \S\ref{sec:EStoKS}. We omit the proofs as they follow closely the proofs in \cite[\S6.3]{fouquetRIMS}, except that we do not have the unwanted factor $\alpha \in \RR$ that appear in the statements\footnote{The attentive reader will notice that the extra factor $\alpha$ is not explicit in the statement of \cite[Theorem B(iii)]{fouquetRIMS}, however that Fouquet's element $z_\infty$ differ from the $\frak{z}_{\infty}$ defined below by a factor of $\alpha$.}${}^{,}$\footnote{Note that the extra factor $\alpha$ in Fouquet's\cite{fouquetRIMS} arguments is needed to obtain Kolyvagin systems for each specialization of $\TT$. Once one obtains those (in our case, they descend from our big Heegner point Kolyvagin system), the arguments of \S6 in loc.cit. carry out verbatim.} of \cite[Theorem B(iii), Theorem 3]{fouquetRIMS}.

For a finite extension $L$ of $K$, let $\tilde{H}^i_{f}(L, \TT)$ be Nekov\'a\v{r}'s extended Selmer group defined in \cite[\S6]{nek} and let
$$\tilde{H}^i_{f,\textup{Iw}}(K_\infty, \TT)=\varprojlim_s \tilde{H}^i_{f}(K_s, \TT).$$
It follows by \cite[Lemma 9.6.3]{nek} and \cite[Lemma 2.4.4]{howard} that $\tilde{H}^i_{f}(K_s, \TT)=H^1_{\FF_\textup{Gr}}(K_s,\TT)$ and hence we may define an element
  $$\frak{z}_\infty=\{\frak{z}_s\} \in \tilde{H}^1_{f,\textup{Iw}}(K_\infty, \TT).$$

Assume henceforth that the ring $R_\infty$ is a regular ring, so that $(R_\infty)_{\frak{p}}$ is a DVR for every height one prime $\frak{p}$ of $R_\infty$. If $M$ is a finitely generated torsion $R_\infty$-module, we may then define the characteristic ideal
$$\textup{char}(M)=\prod_{\frak{p}}\frak{p}^{\textup{length}(M_\frak{p})}.$$

For a general $R_\infty$-module $M$, let $M_{\textup{tors}}$ denote the $R_\infty$-torsion submodule.

\begin{thm}
\label{thm:mainappl}
$$\textup{char}\left(\tilde{H}^2_{f,\textup{Iw}}(K_\infty, \TT)_{\textup{tors}}\right)\,\mid \, \textup{char}\left(\tilde{H}^1_{f,\textup{Iw}}(K_\infty, \TT)/R_\infty\frak{z}_\infty\right)^2.$$
\end{thm}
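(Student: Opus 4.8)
The plan is to deduce this from the existence of the big Heegner point Kolyvagin system $\pmb{\tilde{\kappa}}$ of Theorem~\ref{thm:KS}, via the general Kolyvagin-system machinery of Mazur--Rubin \cite{mr02} as adapted to two-variable deformations (cf.\ \cite{fouquetRIMS, kbbdeform}). The key input is that $\tilde{\kappa}_1 = \frak{z}_\infty$ is the bottom class of a genuine Kolyvagin system over $R_\infty = \RR\otimes_{\ZZ_p}\LLac$, together with the identification $\tilde{H}^i_{f}(K_s,\TT) = H^1_{\FF_{\textup{Gr}}}(K_s,\TT)$ (via \cite[Lemma 9.6.3]{nek} and \cite[Lemma 2.4.4]{howard}), so that $\tilde{H}^i_{f,\textup{Iw}}(K_\infty,\TT)$ is exactly the Iwasawa Selmer module $H^1_{\FF_{\textup{Gr}}}$ to which the machinery applies. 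I would first reduce, by the compatibility of characteristic ideals with localization at height-one primes $\frak{p}$ of $R_\infty$, to a statement over the discrete valuation ring $(R_\infty)_\frak{p}$; here one uses that $R_\infty$ is regular (assumed) so that $(R_\infty)_\frak{p}$ is a DVR, and that the Selmer complex $\widetilde{R\Gamma}_f$ has cohomology concentrated in degrees $1,2$ with $\tilde{H}^1_{f,\textup{Iw}}$ torsion-free of generic rank one (the core rank being one, which is the reason the Heegner-point Kolyvagin system has length one in the sense of \cite{mr02}).

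**The main body of the argument** is then the standard Kolyvagin bound: over the DVR $(R_\infty)_\frak{p}$ one runs the argument of \cite[\S6]{fouquetRIMS} (equivalently \cite{o}, or the original \cite[Theorem 5.2.10]{mr02}) which, given a Kolyvagin system whose first term generates a submodule of finite index $n_1$ in $\tilde{H}^1_{f,\textup{Iw}}$, produces the divisibility
$$
\textup{length}_{(R_\infty)_\frak{p}}\!\left(\tilde{H}^2_{f,\textup{Iw}}(K_\infty,\TT)_{\textup{tors},\frak{p}}\right)\ \le\ 2\,\textup{length}_{(R_\infty)_\frak{p}}\!\left(\bigl(\tilde{H}^1_{f,\textup{Iw}}(K_\infty,\TT)/R_\infty\frak{z}_\infty\bigr)_\frak{p}\right),
$$
the factor of $2$ coming from the self-duality of $\TT$ (the global Euler--Poincar\'e / Greenberg--Wiles formula forces the dual Selmer group to be controlled by the \emph{square} of the index, exactly as in Howard's original Heegner-point Euler system argument \cite{howard-heegner1}). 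One must check that the hypotheses of that machinery hold in the present setting: that $\overline{T}$ is absolutely irreducible as a $G_K$-representation (assumed via Hypothesis~\ref{hypo1} and the running hypotheses of \S\ref{sec:EStoKS}), that the Greenberg local conditions at $p$ and at $\ell\mid N$ define a \emph{self-dual} Selmer structure of core rank one, and that $\pmb{\tilde{\kappa}}$ is nonzero — this last is guaranteed because $\tilde{\kappa}_1 = \frak{z}_\infty \ne 0$, which in turn follows from Howard's nonvanishing results for the big Heegner point together with \textbf{H.stz} and \textbf{H.Tam}. Finally I would reassemble the local-at-$\frak{p}$ inequalities into the global divisibility of characteristic ideals by taking the product over all height-one $\frak{p}$.

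**The main obstacle** is not a new computation but bookkeeping: one must be certain that the comparison $\tilde{H}^i_{f,\textup{Iw}}(K_\infty,\TT) = H^1_{\FF_{\textup{Gr}}}$ is valid in \emph{both} degrees $i=1,2$ and is compatible with the Iwasawa-theoretic descent, so that the $\tilde{H}^2$ appearing in the statement is genuinely the dual Selmer group bounded by the Kolyvagin machinery; this is where \textbf{H.stz} re-enters (it is precisely what makes the local condition at $p$ behave well under specialization, cf.\ Corollary~\ref{cor:greenbergatphstz} and Remark~\ref{rem:hstzharsh?}), and why \textbf{H.Tam} re-enters (Proposition~\ref{prop:comparegrwithtildegrwithbk} identifies $\tilde{\FF}_{\textup{Gr}}$ with $\FF_{\textup{Gr}}$ so that the transition maps in the inverse limit defining $\overline{\textbf{KS}}$ are the expected ones). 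Since the excerpt explicitly says ``We omit the proofs as they follow closely the proofs in \cite[\S6.3]{fouquetRIMS}, except that we do not have the unwanted factor $\alpha$,'' the write-up will simply be a pointer to Fouquet's argument with the remark that our $\frak{z}_\infty$ already incorporates the Tamagawa normalization that forced the spurious $\alpha$ in \cite{fouquetRIMS}, so that the divisibility is clean.
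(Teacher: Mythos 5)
Your proposal is correct and matches the paper's approach: the paper itself omits the proof, pointing to the argument of \cite[\S 6.3]{fouquetRIMS} applied to the Kolyvagin system of Theorem~\ref{thm:KS}, with the observation that since $\tilde{\kappa}_1=\frak{z}_\infty$ exactly (no extraneous factor $\alpha$), the resulting divisibility is clean. Your sketch of how that machinery runs (localization at height-one primes of the regular ring $R_\infty$, the DVR-level Kolyvagin bound with the square from self-duality, and the identification $\tilde{H}^i_{f}=H^1_{\FF_{\textup{Gr}}}$ via \cite[Lemma 9.6.3]{nek} and \cite[Lemma 2.4.4]{howard}) is consistent with what the cited argument does.
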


It is reasonable to expect that one could adapt the arguments of \cite{arnold} in order to prove a similar statement assuming only that the ring $\RR$ is normal.

\subsection*{Acknowledgements.}
\label{sec:applications} The author wishes to thank Olivier Fouquet, Barry Mazur  and Jan Nekov\'{a}\v{r}, discussions with whom led the author to this work. He also thanks the anonymous referee for his comments and suggestions.

The author acknowledges a Marie Curie
Grant EU-FP7 230668 as well as partial support from T\"UBA, T\"UB\.ITAK and FONDECYT in the duration of this project.
%\newpage
%\appendix
%\section{Linear Algebra}
%\label{appendix:linearalgebra}
%%%%%%%%%%%%%%%%%%%%%%%%%%%%%%%%%%%%%%%%%%%%%%%%
{\scriptsize
\bibliographystyle{halpha}
\bibliography{references}
}
\end{document}